\newtheorem{thm}{Theorem}[section]
\newtheorem{lem}[thm]{Lemma}
\newtheorem{cor}[thm]{Corollary}
\newtheorem{pro}[thm]{Proposition}
\newtheorem{ex}[thm]{Example}
\theoremstyle{definition}
\newtheorem{rmk}[thm]{Remark}
\newtheorem{defi}[thm]{Definition}
\newcommand{\nc}{\newcommand}
\newcommand{\delete}[1]{}
\nc{\mlabel}[1]{\label{#1}}  
\nc{\mcite}[1]{\cite{#1}}  
\nc{\mref}[1]{\ref{#1}}  
\nc{\mbibitem}[1]{\bibitem{#1}} 
\nc{\mlabel}[1]{\label{#1}{\hfill \hspace{1cm}{\bf{{\ }\hfill(#1)}}}}
\nc{\mcite}[1]{\cite{#1}{{\bf{{\ }(#1)}}}}  
\nc{\mref}[1]{\ref{#1}{{\bf{{\ }(#1)}}}}  
\nc{\mbibitem}[1]{\bibitem[\bf #1]{#1}} 
\newcommand {\emptycomment}[1]{}
\newcommand{\emptycomment}[1]{}
\nc{\calo}{\mathcal{O}}
\nc{\oop}{$\mathcal{O}$-operator\xspace}
\nc{\oops}{$\mathcal{O}$-operators\xspace}
\nc{\mrho}{{\bm{\varrho}}}
\nc{\bfk}{\mathbf{K}}
\nc{\invlim}{\displaystyle{\lim_{\longleftarrow}}\,}
\nc{\ot}{\otimes}
\nc{\CV}{\mathbf{C}}
\newcommand{\dr}{\dM^{\rm reg}}
\newcommand{\add}{\frka\frkd}
\newcommand{\lon }{\,\rightarrow\,}
\newcommand{\be }{\begin{equation}}
\newcommand{\ee }{\end{equation}}
\newcommand{\g}{\mathfrak g}
\newcommand{\h}{\mathfrak h}
\newcommand{\huaB}{\mathcal{B}}
\newcommand{\huaR}{\mathcal{R}}
\newcommand{\huaC}{{\mathcal{C}}}
\newcommand{\huaH}{\mathcal{H}}
\newcommand{\huaO}{{\mathcal{O}}}
\newcommand{\huaZ}{\mathcal{Z}}
\newcommand{\frka}{\mathfrak a}
\newcommand{\frkd}{\mathfrak d}
\newcommand{\frkh}{\mathfrak h}
\newcommand{\frkT}{\mathfrak T}
\newcommand{\half}{\frac{1}{2}}
\newcommand{\Courant}[1]{\left\llbracket  #1\right\rrbracket }
\newcommand{\Id}{{\rm{Id}}}
\newcommand{\br}[1]{   [ \cdot,    \cdot  ]   }
\newcommand{\dM}{\mathrm{d}}
\newcommand{\Hom}{\mathrm{Hom}}
\newcommand{\Nij}{\mathrm{Nij}}
\newcommand{\Ob}{\mathsf{Ob^2_{T}}}
\newcommand{\gl}{\mathfrak {gl}}
\newcommand{\ad}{\mathrm{ad}}
\newcommand{\K}{\mathbf{K}}
\begin{document}

\title[Deformation and cohomology of $\huaO$-operators]{Deformations and their controlling cohomologies of $\huaO$-operators}

\author{Rong Tang}
\address{Department of Mathematics, Jilin University, Changchun 130012, Jilin, China}
\email{tangrong16@mails.jlu.edu.cn}

\author{Chengming Bai}
\address{Chern Institute of Mathematics and LPMC, Nankai University,
Tianjin 300071, China}
\email{baicm@nankai.edu.cn}

\author{Li Guo}
\address{Department of Mathematics and Computer Science,
         Rutgers University,
         Newark, NJ 07102}
\email{liguo@rutgers.edu}

\author{Yunhe Sheng}
\address{Department of Mathematics, Jilin University, Changchun 130012, Jilin, China}
\email{shengyh@jlu.edu.cn}

\date{\today}

\begin{abstract}
\oops are important in broad areas in mathematics and physics,
such as integrable systems, the classical Yang-Baxter equation,
pre-Lie algebras and splitting of operads. In this paper, a
deformation theory of \oops is established in
consistence with the general principles of deformation theories.
On the one hand, \oops are shown to be characterized as the
Maurer-Cartan elements in a suitable  graded Lie algebra. A given
\oop gives rise to a differential graded Lie algebra whose
Maurer-Cartan elements characterize deformations of the given
\oop. On the other hand, a Lie algebra with a representation is
identified from an \oop $T$ such that the corresponding
Chevalley-Eilenberg cohomology controls deformations of $T$, thus
can be regarded as an analogue of the Andr\'e-Quillen cohomology
for the \oop. Thereafter, infinitesimal and formal deformations of
\oops are studied. In particular, the notion of Nijenhuis elements
is introduced to characterize trivial infinitesimal deformations.
Formal deformations and extendibility of   order $n$ deformations
of an \oop are also characterized in terms of the new cohomology
theory. Applications are given to deformations of Rota-Baxter
operators of weight 0 and skew-symmetric $r$-matrices for
the classical Yang-Baxter equation. For skew-symmetric
$r$-matrices, there is an independent Maurer-Cartan
characterization of the deformations as well as an analogue of the
Andr\'e-Quillen cohomology controlling the deformations, which
turn out to be equivalent to the ones obtained as \oops associated
to the coadjoint representations. Finally, infinitesimal
deformations of skew-symmetric $r$-matrices and their
corresponding triangular Lie bialgebras are studied.
\end{abstract}

\subjclass[2010]{17B37,81R50,17B56,81R12,16T26,17A30,17B62}

\keywords{cohomology, deformation, $\huaO$-operator, Rota-Baxter operator, $r$-matrix, pre-Lie algebra}

\maketitle

\vspace{-1.1cm}

\tableofcontents

\allowdisplaybreaks

\section{Introduction}\mlabel{sec:intr}
This paper studies  deformations of \oops, in particular of Rota-Baxter operators and skew-symmetric $r$-matrices, using Maurer-Cartan elements and cohomology theory.

\subsection{Deformations and cohomology}
\label{ss:deform}
The method of deformation is ubiquitous in mathematics and physics.
Roughly speaking, a deformation of an object in a mathematical structure is a perturbation of the object (by a parameter for instance) which gives the same  kind of   structure. Motivated by the foundational  work of Kodaira and Spencer~\cite{KS} for complex analytic structures, deformation theory finds its generalization in algebraic geometry~\cite{Ha} and further in number theory as deformations of Galois representations~\cite{Maz}.

In physics, the idea of deformation is behind the perturbative quantum field theory and quantizing classical mechanics, inspiring the mathematical notion of quantum groups. Deformation quantization has been studied under many contexts in mathematical physics~\cite{K1,K2,Ri,Sc}.

The deformation of algebraic structures began with the seminal
work of Gerstenhaber~\cite{Ge0,Ge,Ge2,Ge3,Ge4} for associative
algebras and followed by its extension to Lie algebras by
Nijenhuis and Richardson~\cite{NR,NR2}. Deformations of other
algebraic structures such as pre-Lie algebras have also been
developed~\cite{Bu0}. In general, deformation theory was developed
for binary quadratic operads by Balavoine~\cite{Bal}. For more
general operads we refer the reader to the books of
Kontsevich-Soibelman~\cite{KSo} and Loday-Vallette~\cite{LV}, and the references therein. Also see
the paper of Fox~\cite{Fo} for a categorical approach by triples
and cotriples.

A suitable deformation theory of an algebraic structure needs to
follow certain general principle: on the one hand, for a given
object with the algebraic structure, there should be a
differential graded Lie algebra whose Maurer-Cartan elements
characterize deformations of this object. On the other hand, there
should be a suitable cohomology so that the infinitesimal of a
formal deformation can be identified with a cohomology class, and
then a theory of the obstruction to the integration of an
infinitesimal deformation can be developed using this cohomology
theory. The cohomology groups for the deformation theories
of associative algebras and Lie algebras are the
Hochschild cohomology groups and the Chevalley-Eilenberg
cohomology groups respectively. In general the cohomology
groups are the Andr\'e-Quillen cohomology groups which are
isomorphic to the cohomology groups of the deformation
complexes~\cite{LV}.

Nijenhuis operators also play an important role in  deformation
theories  due to their relationship with trivial infinitesimal
deformations. There are interesting applications of Nijenhuis operators such as constructing biHamiltonian systems to study the integrability of nonlinear evolution equations \cite{CGM,Do}.

\subsection{Rota-Baxter operators, skew-symmetric $r$-matrices and \oops}
The above deformation theories do not apply to the study of  deformations of linear operators on algebras such as Rota-Baxter operators and more generally \oops, as well as skew-symmetric $r$-matrices.
The goal of this paper is to develop a deformation theory of \oops.

We first recall some basic concepts.

\begin{defi} \label{defi:O} Let $(\g,[\cdot,\cdot])$ be a Lie algebra.
\begin{enumerate}
\item[\rm(i)]
Let $\lambda$ be a scalar. A linear operator $P:\g\longrightarrow \g$ is called a {\bf Rota-Baxter operator of weight $\lambda$} if
\begin{equation} [P(x),P(y)]=P\big([P(x),y]+ [x,P(y)] +\lambda  [x,y]\big), \quad \forall x, y \in \g.
\label{eq:rbo}
\end{equation}
\item[\rm(ii)] We also use the notation $[\cdot,\cdot]$ to denote
the Gerstenhaber bracket on $\wedge^\bullet\g$. An element
$r\in\wedge^2\g$ is called a skew-symmetric {\bf $r$-matrix} if $r$ satisfies the {\bf classical Yang-Baxter equation (CYBE)}:
\begin{equation}\label{eq:cybe}
  [r,r]=0.
\end{equation}
\item[\rm(iii)]
Let $\rho:\g\longrightarrow\gl(V)$ be a representation of $\g$ on a vector space $V$. An {\bf \oop} on $\g$ with respect to the representation $(V;\rho)$ is a linear map $T:V\longrightarrow\g$ such that
 \begin{equation}
   [Tu,Tv]=T\big(\rho(Tu)(v)-\rho(Tv)(u)\big),\quad\forall u,v\in V.
 \mlabel{eq:defiO}
 \end{equation}
\end{enumerate}
\label{de:conc}
\end{defi}
Note that when $\rho$ is the adjoint representation of $\g$,
Eq.~\eqref{eq:defiO} reduces to Eq.~(\ref{eq:rbo}) with
$\lambda=0$, which means that a Rata-Baxter operator of weight
zero is an \oop on $\g$ with respect to the adjoint
representation. Furthermore, a skew-symmetric $r$-matrix
corresponds to an $\mathcal O$-operator on $\g$ with respect to
the coadjoint representation \cite{Ku}.

The concept of Rota-Baxter operators on associative algebras was introduced in 1960 by G. Baxter \cite{Ba} in his study of
fluctuation theory in probability. Recently it has found many applications, including in Connes-Kreimer's~\cite{CK} algebraic approach to the renormalization in perturbative quantum
field theory. In the Lie algebra context, a Rota-Baxter
operator of weight zero
was introduced independently in the
1980s as the operator form of the classical Yang-Baxter equation,
named after the physicists C.-N. Yang and R. Baxter \cite{BaR,Ya}, whereas the classical Yang-Baxter equation plays important roles in many fields in mathematics and mathematical physics such as integrable systems and quantum groups \cite{CP,STS}. For further details on Rota-Baxter operators, see~\cite{Gub}.

To better understand the classical Yang-Baxter equation and
the related integrable systems, the more general notion of an \oop (later also called
a relative Rota-Baxter operator or a generalized Rota-Baxter operator)
on a Lie algebra was introduced by Kupershmidt~\cite{Ku},
which can be traced back to Bordemann
\cite{Bor}. In addition, the defining relation of an \oop was also
called the Schouten curvature and is the algebraic formulation of
the contravariant analogue of the Cartan curvature of a Lie
algebra-valued one-form on a Lie group~\cite{KM}. An
$\huaO$-operator gives rise to a skew-symmetric $r$-matrix in a larger Lie algebra \cite{Bai}.

In the context of associative algebras, \oops give rise to the
important structure of dendriform algebras (\cite{Lo5}) and, more
generally, leads lead to the splitting of
operads~\cite{BBGN,PBG}.

\subsection{Summary of the results and outline of the paper}

Given the critical roles played by Rota-Baxter operators, skew-symmetric $r$-matrices and \oops,
it is important to develop their deformation and cohomology theories. As
aforementioned, the existing general theories do not apply to such cases.
To meet this need, we establish a deformation theory of \oops which is remarkably consistent with
the general principles of deformation theories as indicated in Section~\ref{ss:deform}, including a suitable differential graded Lie algebra whose Maurer-Cartan elements characterize the \oops and their deformations as well as an analogue of the Andr\'e-Quillen cohomology which
controls the infinitesimal and formal deformations of \oops.
Furthermore, \oops are closely related to pre-Lie algebras~\cite{Bai} (see Definition~\ref{de:prelie}) which have a well-established deformation theory. Our deformation theory of \oops is also compatible with that of pre-Lie algebras.
We hope that this study will shed light on a general deformation theory for algebraic structures (operads) with nontrivial linear operators.

In the following we give a summary of the main results and an
outline of the paper.

First Section~\ref{ss:mce} provides the Maurer-Cartan
characterization of \oops and their deformations. From a
representation $(V;\rho)$ of a Lie algebra $\g$, we obtain, via
the derived bracket, a graded Lie algebra
$(\oplus_{k=0}^{\dim(V)}\Hom(\wedge^kV,\g),\Courant{\cdot,\cdot})$,
of which the Maurer-Cartan elements are exactly the
$\huaO$-operators. Further, a given \oop $T$, as a Maurer-Cartan
element, gives rise to a differential $d_T:=\Courant{T,\cdot}$ on
this graded Lie algebra. The Maurer-Cartan elements of the
resulting differential graded Lie algebra correspond precisely to
deformations of the given \oop $T$. There is a close
relationship between this graded Lie algebra and the one for
pre-Lie algebras given in \cite{CL}.

Section~\ref{sec:coh} sets up a cohomology theory for \oops. In
order to obtain a suitable analogue of the Andr\'e-Quillen
cohomology for \oops, it is natural to take the
Chevalley-Eilenberg cohomology of a Lie algebra with coefficients
in a representation. Contrary to our intuition, it is not the Lie
algebra $\g$ and the representation $V$, but a new Lie algebra
structure on $V$ induced by the \oop $T$ and a representation of
$V$ on $\g$. Explicitly, associated to an \oop $T$ on a Lie
algebra $\g$ with respect to a representation $(V;\rho)$ we obtain a Lie algebra
$V^c:=(V, [\cdot,\cdot]_T)$ with $[u,v]_T:=\rho(Tu)(v)-\rho(Tv)(u)
\text{ for all } u, v\in V,$ and identify a natural representation
$\mrho$ of the Lie algebra $V^c$ on the space $\g$. We take the
corresponding Chevalley-Eilenberg cohomology to be the cohomology
of the \oop and apply it to control infinitesimal and formal
deformations   of $\huaO$-operators in the following sections.
Moreover, we found that this Lie algebra $V^c$ is exactly the
commutator of the pre-Lie algebra induced by the \oop $T$. There
is also a natural map $\Phi$ from these cohomology groups to the
cohomology groups of the associated pre-Lie algebra.

The usual isomorphism~\cite{LV} between the Andr\'e-Quillen cohomology and the cohomology of the deformation complex has its counterpart for \oops: this Chevalley-Eilenberg coboundary operator $d_\mrho$ coincides with the differential $d_T$ introduced above up to a sign, completing the following diagram:
\vspace{-.3cm}
 \[
\small{ \xymatrix{
\huaO\mbox{-operator} ~T \ar[d] \ar[r] & \mbox{MC element}~ T \ar[l] \ar[r] & \mbox{differential} ~d_T=\Courant{T,\cdot} \ar@{=}[d]^{(-1)^{|\cdot|}} \\
 \mbox{Lie algebra } V^c \ar[r] & \mbox{rep}~ (\g;\mrho)~  \mbox{of }~ V^c \ar[r] & \mbox{C.E. coboundary operator} ~ d_\mrho.}
}
\vspace{-.3cm}
\]

Section~\ref{sec:infdef} studies one parameter infinitesimal
deformations of an \oop. We show that if two deformations are
isomorphic, then the corresponding generators are in the same
cohomology class of the \oop. We introduce the notion of Nijenhuis
elements to characterize trivial deformations. By means of the
above natural map $\Phi$ from cohomology groups of the \oop and
those of the corresponding associated pre-Lie algebra,
further relations are obtained:
\smallskip

\begin{tabular}{|c|c|}
\hline  $\huaO$-operators $T:V\rightarrow\g$ w.r.t. $(V;\rho)$& pre-Lie algebra $(V,\cdot_T),~u\cdot_Tv:=\rho(Tu)(v)$ \\\hline \hline
infinitesimal deformation of $T$ & infinitesimal deformation of $(V,\cdot_T)$ \\
generated by $\frkT:V\rightarrow\g$, $T+t\frkT$  & generated by   $\omega_\frkT$, $\omega_\frkT(u,v):=\rho(\frkT u)(v)$\\\hline
$\frkT$ is a 1-cocycle& $\omega_\frkT$ is a 2-cocycle \\\hline
trivial deformations correspond to & trivial deformations correspond to\\
Nijenhuis elements $x\in\g$& Nijenhuis operators $\rho(x)\in\gl(V)$ on $(V,\cdot_T)$
\\\hline
\end{tabular}
\smallskip

Section~\ref{sec:fordef} utilizes the cohomology theory of an \oop to study formal deformations of \oops.
We show that the infinitesimals of two
equivalent one-parameter formal deformations of an \oop are in the
same first cohomology class of the \oop and that a higher order
deformation of an \oop is extendable if and only if its
obstruction class in the second cohomology group of the \oop is
trivial.

Section~\ref{sec:rbar} specializes to Rota-Baxter operators of
weight 0 on a Lie algebra $\g$, regarded as $\huaO$-operators on
$\g$ with respect to the adjoint representation. We give some
precise formulas for deformations of Rota-Baxter operators of
weight 0. Nijenhuis elements in certain Rota-Baxter Lie
algebras (Lie algebras with Rota-Baxter operators of weight 0) are classified.

Section~\ref{sec:rmat} focuses on deformations of skew-symmetric
$r$-matrices on a Lie algebra $\g$, regarded as 
$\huaO$-operators on $\g$ with respect to the coadjoint
representation. Viewing the CYBE as a Maurer-Cartan equation, we
first provide a direct Maurer-Cartan characterization of
deformations and an analogue of the Andr\'e-Quillen cohomology
controlling the infinitesimal deformations. This deformation
theory turns out to be equivalent to the one obtained as \oops
with respect to the coadjoint representation. Through this
equivalence, a notion of weak homomorphism 
between skew-symmetric $r$-matrices is introduced to further study
their infinitesimal deformations. Finally, we study infinitesimal
deformations of triangular Lie bialgebras by the natural
correspondence between skew-symmetric $r$-matrices and triangular
Lie bialgebras.

\smallskip
Throughout this paper, we work over an algebraically closed field $\K$ of characteristic 0 and all the vector spaces are over $\K$ and are finite-dimensional. 
\section{Maurer-Cartan elements, \oops and their deformations}
\label{ss:mce}

Usually for an algebraic structure, Maurer-Cartan elements in a
suitable graded Lie algebra are used to characterize realizations
of the algebraic structure on a space. For a given realization of
the algebraic structure, the corresponding Maurer-Cartan element
equipped the graded Lie algebra with a differential. Then the
deformations of the given realization are characterized as the
Maurer-Cartan elements of the resulting differential graded Lie
algebra. See Remark~\ref{rk:plmc} for the case of pre-Lie algebras
and~\cite{LV} for operads. Adapting this principle to \oops, we
first need to construct a  graded Lie algebra for a Lie algebra with a
representation whose Maurer-Cartan elements characterize the
\oops. It then follows that a given \oop gives rise to a differential on this graded Lie algebra and there is a one-to-one
correspondence between the set of Maurer-Cartan elements in the resulting
differential graded Lie algebra and the set of deformations of this \oop.

We first recall a general notion and a basic fact~\cite{LV}.
\begin{defi}
  Let $(\g=\oplus_{k=0}^\infty\g_i,[\cdot,\cdot],\dM)$ be a differential graded Lie algebra.  A degree $1$ element $\theta\in\g_1$ is called a {\bf Maurer-Cartan element} of $\g$ if it
  satisfies the following {\bf Maurer-Cartan equation}:
\vspace{-.3cm}
  \begin{equation}
  \dM \theta+\half[\theta,\theta]=0.
  \label{eq:mce}
  \end{equation}
  \end{defi}
A graded Lie algebra is a differential graded Lie
algebra with $d=0$. Then we
have

\begin{pro}
Let $(\g=\oplus_{k=0}^\infty\g_i,[\cdot,\cdot])$ be a graded Lie algebra and let $\mu\in \g_1$ be a Maurer-Cartan element. Then the map
$$ d_\mu: \g \longrightarrow \g, \ d_\mu(u):=[\mu, u], \quad \forall u\in \g,$$
is a differential on $\g$. For any $v\in \g$, the sum $\mu+v$ is a
Maurer-Cartan element of the graded Lie algebra $(\g,
[\cdot,\cdot])$ if and only if $v$ is a Maurer-Cartan element of the differential graded Lie algebra $(\g, [\cdot,\cdot], d_\mu)$. \label{pp:mce}
\end{pro}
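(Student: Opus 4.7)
The plan is to split the proof into two independent verifications, both of which reduce to careful applications of graded skew-symmetry and the graded Jacobi identity, with the Maurer-Cartan hypothesis entering as the single algebraic input.

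First I would verify that $d_\mu := [\mu, \cdot]$ is a degree $1$ differential. Since $\mu \in \g_1$, bracketing with $\mu$ is manifestly of degree $1$, and the graded Jacobi identity immediately shows $d_\mu$ is a graded derivation of $[\cdot,\cdot]$. The only substantive point is $d_\mu^2 = 0$. Applying the graded Jacobi identity with $|\mu|=1$ gives
\[
[\mu,[\mu,u]] \;=\; [[\mu,\mu],u] \;-\; [\mu,[\mu,u]],
\]
hence $2[\mu,[\mu,u]] = [[\mu,\mu],u]$. Because $(\g,[\cdot,\cdot])$ is a graded Lie algebra (ambient differential zero), the Maurer-Cartan equation \eqref{eq:mce} for $\mu$ collapses to $[\mu,\mu]=0$, so $d_\mu^2=0$.

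Second, I would verify the correspondence between Maurer-Cartan elements by expanding
\[
\tfrac{1}{2}[\mu+v,\mu+v] \;=\; \tfrac{1}{2}[\mu,\mu] + \tfrac{1}{2}\big([\mu,v]+[v,\mu]\big) + \tfrac{1}{2}[v,v].
\]
The first summand vanishes by the Maurer-Cartan hypothesis on $\mu$. For the middle summand I would invoke graded skew-symmetry for two degree $1$ elements, which yields $[v,\mu] = -(-1)^{1\cdot 1}[\mu,v] = [\mu,v]$, so the cross term equals $[\mu,v] = d_\mu v$. Thus the left-hand side becomes $d_\mu v + \tfrac{1}{2}[v,v]$, which is exactly the Maurer-Cartan expression for $v$ in the differential graded Lie algebra $(\g,[\cdot,\cdot],d_\mu)$. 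Both directions of the equivalence are now read off from a single identity.

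The main obstacle is purely bookkeeping: the signs coming from graded skew-symmetry and graded Jacobi applied to odd-degree elements are easy to miscount, and a lost sign in the cross term $[\mu,v]+[v,\mu]$ would break the identification of $d_\mu v$ with the linear-in-$v$ part of the Maurer-Cartan expansion. Once those conventions are nailed down, no further difficulty arises and the proposition follows by direct expansion.
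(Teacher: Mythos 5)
Your proof is correct and is the standard direct verification: $[\mu,\mu]=0$ plus the graded Jacobi identity (in characteristic $0$) gives $d_\mu^2=0$, and expanding $\tfrac12[\mu+v,\mu+v]$ with the cross term $[\mu,v]=[v,\mu]$ identifies the Maurer-Cartan equations. The paper states this proposition as a recalled basic fact (citing Loday--Vallette) and gives no proof, so there is nothing to compare against; your argument is exactly what is expected, with the only implicit assumption being that $v$ is taken in $\g_1$ so that $\mu+v$ is again a degree $1$ element.
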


Let $(V;\rho)$ be a representation of a Lie algebra $\g$. Consider the graded vector space
$$\huaC^*(V,\g):=\oplus_{k=0}^{\dim(V)}\Hom(\wedge^{k}V,\g).$$
Define a skew-symmetric bracket operation $$\Courant{\cdot,\cdot}: \Hom(\wedge^nV,\g)\times \Hom(\wedge^mV,\g)\longrightarrow \Hom(\wedge^{m+n}V,\g)$$ by
\begin{eqnarray}
&&\nonumber\Courant{P,Q}(u_1,u_2,\cdots,u_{m+n})\\
\mlabel{o-bracket}&=&\sum_{\sigma\in \mathbb S_{(m,1,n-1)}}(-1)^{\sigma}P(\rho(Q(u_{\sigma(1)},\cdots,u_{\sigma(m)}))u_{\sigma(m+1)},u_{\sigma(m+2)},\cdots,u_{\sigma(m+n)})\\
\nonumber&&-(-1)^{mn}\sum_{\sigma\in \mathbb S_{(n,1,m-1)}}(-1)^{\sigma}Q(\rho(P(u_{\sigma(1)},\cdots,u_{\sigma(n)}))u_{\sigma(n+1)},u_{\sigma(n+2)},\cdots,u_{\sigma(m+n)})\\
\nonumber&&+(-1)^{mn}\sum_{\sigma\in \mathbb S_{(n,m)}}(-1)^{\sigma}[P(u_{\sigma(1)},\cdots,u_{\sigma(n)}),Q(u_{\sigma(n+1)},\cdots,u_{\sigma(m+n)})]
\end{eqnarray}
for all $P\in\Hom(\wedge^nV,\g)$ and $Q\in\Hom(\wedge^mV,\g)$.

Note that for all $x,y\in\g$, $\Courant{x,y}=[x,y]$. Furthermore, we have
\begin{pro}\mlabel{pro:gla}
  With the above notations, $(\huaC^*(V,\g),\Courant{\cdot,\cdot})$ is a graded Lie algebra. Its Maurer-Cartan elements are precisely the $\huaO$-operators on $\g$ with respect to the representation $(V;\rho)$.
\end{pro}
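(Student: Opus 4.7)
The plan is to establish the graded Lie algebra structure by viewing $\huaC^*(V,\g)$ through the semi-direct product Lie algebra $L := \g \ltimes_\rho V$, whose bracket is $[x+u, y+v]_L := [x,y] + \rho(x)v - \rho(y)u$. The bracket $\Courant{\cdot,\cdot}$ will be recognized as the restriction of a derived bracket on the Nijenhuis--Richardson graded Lie algebra $\bigl(\oplus_{k\geq 0}\Hom(\wedge^{k+1}L, L),[\cdot,\cdot]_{\mathrm{NR}}\bigr)$, using the Lie bracket $\mu_L \in \Hom(\wedge^2 L, L)$ itself as a Maurer--Cartan element to produce a differential $\dM := [\mu_L, \cdot]_{\mathrm{NR}}$.

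Each $P\in \Hom(\wedge^n V, \g)$ extends to $\tilde P \in \Hom(\wedge^n L, L)$ by $\tilde P(x_1+u_1, \ldots, x_n+u_n) := P(u_1, \ldots, u_n) \in \g$, i.e., one projects every slot onto $V$ and then applies $P$. Since each $\tilde P$ takes values in $\g$ while depending only on the $V$-components of its inputs, every composition $\tilde P \circ \tilde Q$ vanishes, so the image forms an abelian graded subspace of the Nijenhuis--Richardson algebra. Applying the higher derived bracket construction (Voronov / Kosmann-Schwarzbach) to this abelian subspace together with the Maurer--Cartan element $\mu_L$, one obtains a genuine graded Lie algebra structure on $\huaC^*(V,\g)$. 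A careful expansion of $[[\mu_L, \tilde P]_{\mathrm{NR}}, \tilde Q]_{\mathrm{NR}}$ under the splitting $\mu_L = [\cdot,\cdot]_\g + \rho$, together with standard shuffle bookkeeping, recovers exactly the three summands of formula \eqref{o-bracket}. The main technical obstacle is precisely this combinatorial identification; the alternative --- a direct verification of the graded Jacobi identity from \eqref{o-bracket} --- would force one to cancel terms of three distinct types (triple $\rho$-actions, mixed $\rho$-and-$[\cdot,\cdot]_\g$ terms, and triple $\g$-brackets) using respectively shuffle identities, the representation property of $\rho$, and the Jacobi identity of $\g$.

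For the Maurer--Cartan characterization it suffices to specialize to degree $1$. For $T\in\huaC^1(V,\g) = \Hom(V,\g)$, a short computation with $m=n=1$ in \eqref{o-bracket}, noting that $\mathbb S_{(1,1,0)}$ consists of exactly the identity and the transposition, yields
\begin{equation*}
\half\Courant{T, T}(u, v) \;=\; T\bigl(\rho(Tu)v - \rho(Tv)u\bigr) - [Tu, Tv], \qquad \forall\, u, v \in V.
\end{equation*}
Hence $\half\Courant{T,T} = 0$ is precisely the defining equation \eqref{eq:defiO}, so the Maurer--Cartan elements of $(\huaC^*(V,\g), \Courant{\cdot,\cdot})$ are exactly the \oops on $\g$ with respect to $(V;\rho)$. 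This is consistent with the geometric picture that $T$ is an \oop if and only if its graph $\{Tu+u \mid u \in V\} \subset L$ is a Lie subalgebra of $L$, which is the content of the Maurer--Cartan condition translated through the embedding $P \mapsto \tilde P$.
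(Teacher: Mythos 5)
Your proposal is correct and follows essentially the same route as the paper: both realize $\Courant{\cdot,\cdot}$ as the Voronov derived bracket on the abelian subalgebra $\oplus_k\Hom(\wedge^kV,\g)$ of the Nijenhuis--Richardson graded Lie algebra of $\g\oplus V$, using the Maurer--Cartan element $\mu+\rho$ (your $\mu_L$ for the semidirect product), and both conclude by computing $\Courant{T,T}$ in degree one to identify Maurer--Cartan elements with \oops. The only detail worth noting is the sign $(-1)^n$ in the paper's definition $\Courant{P,Q}:=(-1)^{n}[[\mu+\rho,P]_{\mathrm{NR}},Q]_{\mathrm{NR}}$, which your sketch leaves implicit but which is needed to match formula \eqref{o-bracket} exactly.
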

\begin{proof}
In short, the graded Lie algebra $(\huaC^*(V,\g),\Courant{\cdot,\cdot})$ is obtained via the derived bracket \cite{Vo}. In fact, the Nijenhuis-Richardson bracket $[\cdot,\cdot]_{NR}$ associated to the direct sum vector space $\g\oplus V$ gives rise to a graded Lie algebra $(\oplus_{k=0}^{\dim(\g\oplus V)}\Hom(\wedge^k(\g\oplus V),\g\oplus V),[\cdot,\cdot]_{NR})$. Obviously $\oplus_{k=0}^{\dim(V)}\Hom(\wedge^kV,\g)$ is an abelian subalgebra. A linear map $\mu:\wedge^2\g\longrightarrow \g$ is a Lie algebra structure and $\rho:\g\otimes V\longrightarrow V$ is a representation of $\g$ on $V$ if and only if $\mu+\rho$ is a Maurer-Cartan element of the graded Lie algebra $(\oplus_{k=0}^{\dim(\g\oplus V)}\Hom(\wedge^k(\g\oplus V),\g\oplus V),[\cdot,\cdot]_{NR})$, defining a differential $d_{\mu+\rho}$ on $(\oplus_{k=0}^{\dim(\g\oplus V)}\Hom(\wedge^k(\g\oplus V),\g\oplus V),[\cdot,\cdot]_{NR})$ via
  $$
  d_{\mu+\rho}=[\mu+\rho,\cdot]_{NR}.
  $$
Further, the differential $d_{\mu+\rho}$ gives rise to a graded Lie algebra structure on the graded vector space $\oplus_{k=0}^{\dim(V)}\Hom(\wedge^kV,\g)$ via the derived bracket
  $$
  \Courant{P,Q}:=(-1)^{n}[[\mu+\rho,P]_{NR},Q]_{NR},\quad\forall P\in\Hom(\wedge^nV,\g), Q\in\Hom(\wedge^mV,\g),
  $$
which is exactly the bracket given by Eq.~\eqref{o-bracket}.

Finally, for a degree one element $T:V\longrightarrow \g$, Eq.~\eqref{o-bracket} becomes
$$\Courant{T,T}(u_1,u_2) = 2\big(T(\rho(Tu_1)u_2)-T(\rho(Tu_2)u_1)-[Tu_1,Tu_2]\big),\quad \forall u_1,u_2\in V.$$
This proves the last statement.
\end{proof}

Let $T:V\longrightarrow\g$ be an \oop. Since $T$ is a Maurer-Cartan element of the graded Lie algebra $(\huaC^*(V,\g),\Courant{\cdot,\cdot})$ by Proposition~\ref{pro:gla}, it follows from Proposition~\ref{pp:mce} that
$d_T:=\Courant{T,\cdot}$
 is a graded derivation on the graded Lie
algebra $(\huaC^*(V,\g),\Courant{\cdot,\cdot})$ satisfying $d^2_T=0$.
  Therefore $(\huaC^*(V,\g),\Courant{\cdot,\cdot},d_T)$ is a differential graded Lie algebra.
Further

\begin{thm}\label{thm:deformation}
Let $T:V\longrightarrow\g$ be an $\huaO$-operator on a Lie algebra
$\g$ with respect to a representation $(V;\rho)$. Then for a
linear map $T':V\longrightarrow \g$, $T+T'$ is still
an \oop on the Lie algebra $\g$ associated to the
representation $(V;\rho)$ if and only if $T'$ is a Maurer-Cartan
element of the differential graded Lie algebra
$(\huaC^*(V,\g),\Courant{\cdot,\cdot},d_T)$.
\end{thm}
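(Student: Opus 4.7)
The plan is to derive the theorem as a direct application of Proposition~\ref{pro:gla} and Proposition~\ref{pp:mce}, since the stage has been carefully set so that essentially no new computation is required.

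First I would invoke Proposition~\ref{pro:gla}, which identifies the $\huaO$-operators on $\g$ with respect to $(V;\rho)$ as precisely the degree one Maurer--Cartan elements of the graded Lie algebra $(\huaC^*(V,\g),\Courant{\cdot,\cdot})$. In particular the hypothesis that $T$ is an $\huaO$-operator means exactly that $T\in \Hom(V,\g)=\huaC^1(V,\g)$ satisfies $\Courant{T,T}=0$, and the conclusion we want, that $T+T'$ is an $\huaO$-operator, translates via the same proposition into the Maurer--Cartan equation
\begin{equation*}
\Courant{T+T',\,T+T'}=0.
\end{equation*}

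Next I would apply Proposition~\ref{pp:mce} to the graded Lie algebra $(\huaC^*(V,\g),\Courant{\cdot,\cdot})$, taking $\mu=T$. Since $T$ is Maurer--Cartan, the proposition guarantees that $d_T=\Courant{T,\cdot}$ is a differential making $(\huaC^*(V,\g),\Courant{\cdot,\cdot},d_T)$ into a differential graded Lie algebra, and furthermore that for any $T'\in\huaC^1(V,\g)$, the sum $T+T'$ is a Maurer--Cartan element of $(\huaC^*(V,\g),\Courant{\cdot,\cdot})$ if and only if $T'$ is a Maurer--Cartan element of the differential graded Lie algebra $(\huaC^*(V,\g),\Courant{\cdot,\cdot},d_T)$, i.e.
\begin{equation*}
d_TT'+\tfrac{1}{2}\Courant{T',T'}=0.
\end{equation*}
Chaining the two equivalences together gives the statement of the theorem.

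Strictly speaking, there is no real obstacle: everything in the argument has been reduced to quoting the two prior results. The only place where one might pause is to verify that the abstract Proposition~\ref{pp:mce}, stated for general differential graded Lie algebras, applies unchanged when the initial differential is zero (so that the Maurer--Cartan equation reads $\tfrac{1}{2}\Courant{\mu,\mu}=0$), but this is immediate from the statement. Thus the proof reduces to a short paragraph combining Propositions~\ref{pp:mce} and~\ref{pro:gla}.
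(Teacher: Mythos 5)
Your proposal is correct and is exactly the argument the paper intends: the theorem is stated without a separate proof precisely because it follows by chaining Proposition~\ref{pro:gla} (O-operators are the Maurer--Cartan elements of $(\huaC^*(V,\g),\Courant{\cdot,\cdot})$) with Proposition~\ref{pp:mce} (for a Maurer--Cartan element $\mu$, the sum $\mu+v$ is Maurer--Cartan if and only if $v$ is Maurer--Cartan in the twisted differential graded Lie algebra). Nothing is missing.
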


We next recall the notion of a pre-Lie algebra and the differential graded Lie algebra whose Maurer-Cartan elements  characterize pre-Lie algebra structures. We show that there is a close relationship between these two differential graded Lie algebras.

\begin{defi}\label{de:prelie}
  A {\bf pre-Lie algebra} is a pair $(V,\cdot_V)$, where $V$ is a vector space and  $\cdot_V:V\otimes V\longrightarrow V$ is a bilinear multiplication
satisfying that for all $x,y,z\in V$, the associator
$$(x,y,z):=(x\cdot_V y)\cdot_V z-x\cdot_V(y\cdot_V z)$$
is symmetric in $x,y$, that is,
$$(x,y,z)=(y,x,z)\;\;{\rm or}\;\;{\rm
equivalently,}\;\;(x\cdot_V y)\cdot_V z-x\cdot_V(y\cdot_V z)=(y\cdot_V x)\cdot_V
z-y\cdot_V(x\cdot_V z).$$

\end{defi}

Relating an \oop to a pre-Lie algebra, we have
\begin{thm} $($\cite{Bai}$)$
Let $T:V\to \g$ be an $\huaO$-operator on a Lie algebra $\g$ with respect to a representation $(V;\rho)$. Define a multiplication $\cdot_T$ on $V$ by
\begin{equation}
  u\cdot_T v=\rho(Tu)(v),\quad \forall u,v\in V.
\end{equation}
Then $(V,\cdot_T)$ is a pre-Lie algebra.
\mlabel{thm:opL}
\end{thm}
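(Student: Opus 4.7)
The plan is to verify the pre-Lie identity $(u,v,w)=(v,u,w)$, where $(u,v,w):=(u\cdot_T v)\cdot_T w - u\cdot_T(v\cdot_T w)$, by a direct unfolding of the multiplication $u\cdot_T v = \rho(Tu)(v)$ together with a single application of the defining relation~\eqref{eq:defiO} for an \oop.

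First I would expand both terms of the associator using the definition of $\cdot_T$, obtaining
$$
(u,v,w) = \rho\bigl(T(\rho(Tu)v)\bigr)(w) - \rho(Tu)\rho(Tv)(w).
$$
Next I would invoke Eq.~\eqref{eq:defiO} to replace $T(\rho(Tu)v)$ by $[Tu,Tv] + T(\rho(Tv)u)$, and then use the fact that $\rho$ is a Lie algebra representation, so that $\rho([Tu,Tv]) = \rho(Tu)\rho(Tv) - \rho(Tv)\rho(Tu)$. Substituting these back, the $\rho(Tu)\rho(Tv)(w)$ contributions cancel and what remains is precisely
$$
\rho\bigl(T(\rho(Tv)u)\bigr)(w) - \rho(Tv)\rho(Tu)(w),
$$
which by the definition of $\cdot_T$ is exactly the associator $(v,u,w)$.

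The computation is short and mechanical, so there is no real obstacle; the only conceptual point worth noting is that the required symmetry of the associator arises from a precise matching between the skew-symmetric right-hand side of the \oop equation and the commutator form of $\rho([Tu,Tv])$. No cohomological or Maurer-Cartan machinery is needed at this stage, though Proposition~\ref{pp:mce} and Proposition~\ref{pro:gla} provide the conceptual interpretation that this identity and the \oop equation are both consequences of the single Maurer-Cartan condition $\Courant{T,T}=0$.
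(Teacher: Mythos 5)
Your computation is correct: expanding the associator, substituting $T(\rho(Tu)v)=[Tu,Tv]+T(\rho(Tv)u)$ from Eq.~\eqref{eq:defiO}, and using $\rho([Tu,Tv])=\rho(Tu)\rho(Tv)-\rho(Tv)\rho(Tu)$ does yield exactly $(v,u,w)$, so the pre-Lie identity holds. The paper itself gives no proof of this theorem (it is quoted from \cite{Bai}), and your direct verification is the standard argument for it; nothing further is needed.
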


A permutation $\sigma\in\mathbb S_n$ is called an $(i,n-i)$-unshuffle if $\sigma(1)<\cdots <\sigma(i)$ and $\sigma(i+1)<\cdots <\sigma(n)$. If $i=0$ or $n$, we assume $\sigma=\Id$. The set of all $(i,n-i)$-unshuffles will be denoted by $\mathbb S_{(i,n-i)}$. The notion of an $(i_1,\cdots,i_k)$-unshuffle and the set $\mathbb S_{(i_1,\cdots,i_k)}$ are defined analogously.

Let $V$ be a vector space. For $\alpha\in\Hom(\wedge^{n}V\otimes V,V)$ and $\beta\in\Hom(\wedge^{m}V\otimes V,V)$, define  $\alpha\circ\beta\in\Hom(\wedge^{n+m}V\otimes V,V)$ by
\begin{eqnarray}
\nonumber&&(\alpha\circ\beta)(u_1,\cdots,u_{m+n+1})\\
\mlabel{eq:pLbrac}&:=&\sum_{\sigma\in\mathbb S_{(m,1,n-1)}}(-1)^{\sigma}\alpha(\beta(u_{\sigma(1)},\cdots,u_{\sigma(m+1)}),u_{\sigma(m+2)},\cdots,u_{\sigma(m+n)},u_{m+n+1})\\
&&+(-1)^{mn}\sum_{\sigma\in\mathbb S_{(n,m)}}(-1)^{\sigma}\alpha(u_{\sigma(1)},\cdots,u_{\sigma(n)},\beta(u_{\sigma(n+1)},\cdots,u_{\sigma(m+n)},u_{m+n+1})).
\nonumber
\end{eqnarray}
Then the graded vector space
$\CV^*(V,V):=\oplus_{k\geq 0} \Hom(\wedge^kV\ot V,V)$ equipped with the graded bracket
\begin{eqnarray}
[\alpha,\beta]^C:=\alpha\circ\beta-(-1)^{mn}\beta\circ\alpha,
\quad \forall \alpha\in\Hom(\wedge^{n}V\otimes
V,V),\beta\in\Hom(\wedge^{m}V\otimes V,V),
\mlabel{eq:glapL}
\end{eqnarray}
is a graded Lie algebra.
See \cite{CL,WBLS} for more details.

\begin{rmk}
For $\alpha\in\Hom(V\otimes V,V)$, we have
\begin{eqnarray*}
[\alpha,\alpha]^C(u,v,w)=2(\alpha\circ\alpha)(u,v,w)=2\big(\alpha(\alpha(u,v),w)-\alpha(\alpha(v,u),w)-\alpha(u,\alpha(v,w))+\alpha(v,\alpha(u,w))\big).
\end{eqnarray*}
Thus, $\alpha$ defines a pre-Lie algebra structure on $V$ if and only if $[\alpha,\alpha]^C=0,$ that is, $\alpha$ is a Maurer-Cartan element  of the graded Lie algebra $(\CV^*(V,V),[\cdot,\cdot]^C)$.
\label{rk:plmc}
\end{rmk}

Define a linear map $\Phi:\Hom(\wedge^kV,\g)\lon\Hom(\wedge^{k}V\otimes V,V), k\geq 0,$   by
\begin{eqnarray}\label{eq:phi}
\Phi(f)(u_1,\cdots,u_k,u_{k+1})=\rho(f(u_1,\cdots,u_k))(u_{k+1}),\,\,\,\,\forall
f\in\Hom(\wedge^kV,\g), u_1,\cdots,u_{k+1}\in V.
\mlabel{eq:defiphi}
\end{eqnarray}

\begin{pro}
Let $(V;\rho)$ be a representation of a Lie algebra $\g$. Then $\Phi$ is a
homomorphism of graded Lie algebras from
$(\huaC^*(V,\g),\Courant{\cdot,\cdot})$ to
$(\CV^*(V,V),[\cdot,\cdot]^C)$.
\end{pro}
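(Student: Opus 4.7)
The plan is to verify directly that $\Phi(\Courant{P,Q}) = [\Phi(P),\Phi(Q)]^C$ for all $P\in\Hom(\wedge^nV,\g)$ and $Q\in\Hom(\wedge^mV,\g)$, by expanding both sides via the defining formulas \eqref{o-bracket}, \eqref{eq:pLbrac} and \eqref{eq:glapL}, and matching corresponding terms. Since $\Phi$ preserves degree and is linear, this is the only identity needing verification.

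First I would apply $\Phi$ to $\Courant{P,Q}$ using \eqref{eq:phi}: the result is obtained by substituting the three summands of \eqref{o-bracket} into $\rho(-)(u_{m+n+1})$, after renaming variables so that $u_{m+n+1}$ is the "extra" slot that $\Phi$ introduces. This produces three groups of terms: (a) an unshuffle sum in which $Q$ sits inside $P$ via $\rho(Q(\cdots))u_{?}$, (b) an analogous sum in which $P$ sits inside $Q$, and (c) a sum $(-1)^{mn}\sum_{\sigma\in\mathbb S_{(n,m)}}(-1)^{\sigma}\rho([P(u_{\sigma(1)},\dots),Q(u_{\sigma(n+1)},\dots)])(u_{m+n+1})$.

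Next I would expand $[\Phi(P),\Phi(Q)]^C = \Phi(P)\circ \Phi(Q) - (-1)^{mn}\Phi(Q)\circ\Phi(P)$ via \eqref{eq:pLbrac}. Each circle product yields two kinds of terms: an "insertion" term, in which the nested $\Phi(Q)$ (or $\Phi(P)$) appears inside the first-type arguments of the outer map, and a "tail" term, in which the last argument $u_{m+n+1}$ is replaced by $\rho(Q(\cdots))(u_{m+n+1})$ (respectively $\rho(P(\cdots))(u_{m+n+1})$). Using that $\Phi(f)(u_1,\dots,u_k,u_{k+1})=\rho(f(u_1,\dots,u_k))u_{k+1}$, the insertion terms reproduce groups (a) and (b) above verbatim, including signs and unshuffle indexing. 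The crucial step is then to match the tail terms with group (c): the combined tail contribution equals
\[
(-1)^{mn}\!\!\sum_{\sigma\in\mathbb S_{(n,m)}}(-1)^{\sigma}\Bigl(\rho(P(u_{\sigma(1)},\dots,u_{\sigma(n)}))\rho(Q(u_{\sigma(n+1)},\dots,u_{\sigma(m+n)}))-\rho(Q(\cdots))\rho(P(\cdots))\Bigr)(u_{m+n+1}),
\]
which by the representation property $\rho([x,y])=\rho(x)\rho(y)-\rho(y)\rho(x)$ collapses exactly onto the image under $\Phi$ of the bracket summand in \eqref{o-bracket}.

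The main obstacle is bookkeeping: one has to verify that the unshuffle indexing $\mathbb S_{(m,1,n-1)}$ versus $\mathbb S_{(n,1,m-1)}$ on the $\Courant{\cdot,\cdot}$ side agrees with the corresponding unshuffles appearing in \eqref{eq:pLbrac} after the relabeling induced by moving $u_{m+n+1}$ into the last slot, and that the overall signs $(-1)^{mn}$ and $(-1)^{\sigma}$ line up. Once the combinatorial matching of unshuffles and signs is settled, the representation identity handles the "bracket vs. commutator" term and the equality $\Phi(\Courant{P,Q})=[\Phi(P),\Phi(Q)]^C$ follows, proving that $\Phi$ is a morphism of graded Lie algebras.
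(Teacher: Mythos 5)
Your proposal is correct and follows essentially the same route as the paper's proof: a direct term-by-term expansion of both sides, with the insertion terms matching the first two summands of \eqref{o-bracket} and the tail terms combining via the representation identity $\rho([x,y])=\rho(x)\rho(y)-\rho(y)\rho(x)$ to give the bracket summand. The unshuffle/sign bookkeeping you flag is handled in the paper by the explicit bijection $\mathbb S_{(n,m)}\leftrightarrow\mathbb S_{(m,n)}$ with $(-1)^{\tau}=(-1)^{mn}(-1)^{\sigma}$, exactly as your plan anticipates.
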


\begin{proof}
For $P\in\Hom(\wedge^nV,\g)$ and $Q\in\Hom(\wedge^mV,\g)$, we have $[\Phi(P),\Phi(Q)]^C\in\Hom(\wedge^{m+n}V\otimes V,V)$. More precisely, for all $u_1,\cdots,u_{m+n+1}\in V$, we have
\begin{eqnarray*}
&&(\Phi(P)\circ\Phi(Q))(u_1,\cdots,u_{m+n+1})\\
&=&\sum_{\sigma\in\mathbb S_{(m,1,n-1)}}(-1)^{\sigma}\Phi(P)(\Phi(Q)(u_{\sigma(1)},\cdots,u_{\sigma(m+1)}),u_{\sigma(m+2)},\cdots,u_{\sigma(m+n)},u_{m+n+1})\\
&&+(-1)^{mn}\sum_{\sigma\in\mathbb S_{(n,m)}}(-1)^{\sigma}\Phi(P)(u_{\sigma(1)},\cdots,u_{\sigma(n)},\Phi(Q)(u_{\sigma(n+1)},\cdots,u_{\sigma(m+n)},u_{m+n+1}))\\
&=&\sum_{\sigma\in\mathbb S_{(m,1,n-1)}}(-1)^{\sigma}\rho\big(P(\rho(Q(u_{\sigma(1)},\cdots,u_{\sigma(m)}))u_{\sigma(m+1)},u_{\sigma(m+2)},\cdots,u_{\sigma(m+n)})\big)u_{m+n+1}\\
&&+(-1)^{mn}\sum_{\sigma\in\mathbb S_{(n,m)}}(-1)^{\sigma}\rho(P(u_{\sigma(1)},\cdots,u_{\sigma(n)}))\rho(Q(u_{\sigma(n+1)},\cdots,u_{\sigma(m+n)}))u_{m+n+1}.
\end{eqnarray*}
For any $\sigma\in\mathbb S_{(n,m)}$, we define $\tau\in\mathbb S_{(m,n)}$ by
\[
\tau(i)=\left\{
\begin{array}{ll}
\sigma(n+i) & 1\le i\le m;\\
\sigma(i-m) & m+1\le i\le m+n.
\end{array}
\right.
\]
Thus we have $(-1)^{\tau}=(-1)^{mn}(-1)^{\sigma}$.
In fact, the elements of $\mathbb S_{(n,m)}$ are in bijection with the elements of $\mathbb S_{(m,n)}$. Then we have
\begin{eqnarray*}
&&(\Phi(Q)\circ\Phi(P))(u_1,\cdots,u_{m+n+1})\\
&=&\sum_{\tau\in\mathbb S_{(n,1,m-1)}}(-1)^{\tau}\rho\big(Q(\rho(P(u_{\tau(1)},\cdots,u_{\tau(n)}))u_{\tau(n+1)},u_{\tau(n+2)},\cdots,u_{\tau(m+n)})\big)u_{m+n+1}\\
&&+(-1)^{mn}\sum_{\tau\in\mathbb S_{(m,n)}}(-1)^{\tau}\rho(Q(u_{\tau(1)},\cdots,u_{\tau(m)}))\rho(P(u_{\tau(m+1)},\cdots,u_{\tau(m+n)}))u_{m+n+1}\\
&=&\sum_{\sigma\in\mathbb S_{(n,1,m-1)}}(-1)^{\sigma}\rho\big(Q(\rho(P(u_{\sigma(1)},\cdots,u_{\sigma(n)}))u_{\sigma(n+1)},u_{\sigma(n+2)},\cdots,u_{\sigma(m+n)})\big)u_{m+n+1}\\
&&+\sum_{\sigma\in\mathbb S_{(n,m)}}(-1)^{\sigma}\rho(Q(u_{\sigma(n+1)},\cdots,u_{\sigma(n+m)}))\rho(P(u_{\sigma(1)},\cdots,u_{\sigma(n)}))u_{m+n+1}.
\end{eqnarray*}
Therefore, we have
\begin{eqnarray*}
&&[\Phi(P),\Phi(Q)]^C(u_1,\cdots,u_{m+n+1})\\
&=&\sum_{\sigma\in\mathbb S_{(m,1,n-1)}}(-1)^{\sigma}\rho\big(P(\rho(Q(u_{\sigma(1)},\cdots,u_{\sigma(m)}))u_{\sigma(m+1)},u_{\sigma(m+2)},\cdots,u_{\sigma(m+n)})\big)u_{m+n+1}\\
&&-(-1)^{mn}\sum_{\sigma\in\mathbb S_{(n,1,m-1)}}(-1)^{\sigma}\rho\big(Q(\rho(P(u_{\sigma(1)},\cdots,u_{\sigma(n)}))u_{\sigma(n+1)},u_{\sigma(n+2)},\cdots,u_{\sigma(m+n)})\big)u_{m+n+1}\\
&&+(-1)^{mn}\sum_{\sigma\in\mathbb S_{(n,m)}}(-1)^{\sigma}\rho([P(u_{\sigma(1)},\cdots,u_{\sigma(n)}),Q(u_{\sigma(n+1)},\cdots,u_{\sigma(m+n)})])u_{m+n+1}\\
&=&\Phi(\Courant{P,Q})(u_1,\cdots,u_{m+n+1}).
\end{eqnarray*}
Thus $\Phi$ is a homomorphism of graded Lie
algebras from $(\huaC^*(V,\g),\Courant{\cdot,\cdot})$ to
$(\CV^*(V,V),[\cdot,\cdot]^C)$.
\end{proof}

\begin{rmk}\label{rmk:O-pre}
As a direct consequence of the above proposition, the
Maurer-Cartan elements in the first graded Lie algebra are sent to
those in the second graded Lie algebra. Thus by
Proposition~\ref{pro:gla} and Remark~\ref{rk:plmc}, the \oops on $V$
are sent to pre-Lie algebra structures on $V$. Further two \oops on $V$ are sent to the same pre-Lie algebra if and only if they are in the same fiber of $\Phi$. This gives a strengthened form of Theorem~\ref{thm:opL}.
\end{rmk}

\section{Cohomology  of $\huaO$-operators}
\mlabel{sec:coh} In this section we   give a cohomology
theory for \oops which will be used to control
infinitesimal and formal deformations of   \oops in the following sections. Thus, this cohomology can be viewed as an analogue of the Andr\'e-Quillen cohomology.

Let $(V,\cdot_V)$ be a pre-Lie algebra. The commutator $
[x,y]^c=x\cdot_V y-y\cdot_V x$ defines a Lie algebra structure on
$V$, which is called the {\bf sub-adjacent Lie algebra} of
$(V,\cdot_V)$ and denoted by $V^c$. See \cite{Bu} for more details.
In particular, we denote by
$V^c:=(V,[\cdot,\cdot]_T)$ the sub-adjacent Lie algebra of the
pre-Lie algebra $(V,\cdot_T)$ induced from an \oop $T$ on a Lie algebra $\g$ with respect to a representation $(V;\rho)$ given in Theorem~\ref{thm:opL}. Then $T$ is a Lie algebra homomorphism from $(V,[\cdot,\cdot]_T)$ to $\g$.

Let $T:V\longrightarrow \g$ be an $\mathcal O$-operator on a Lie
algebra $\g$ with respect to a representation $(V;\rho)$. We
construct a representation of the sub-adjacent Lie algebra
$(V,[\cdot,\cdot]_T)$ on the vector space $\g$. We will show that the corresponding
Chevalley-Eilenberg cohomology will serve as the cohomology for \oops
that we have been looking for.

\begin{lem}\mlabel{lem:rep} 
Let $T$ be an $\mathcal O$-operator on a Lie
algebra $\g$ with respect to a representation $(V;\rho)$.
  Define
\vspace{-.2cm}
 \begin{equation}
\mrho=\mrho_T:V\longrightarrow\gl(\g), \quad     \mrho(u)(x):=[Tu,x]+T\rho(x)(u),\;\;\forall x\in \g,u\in
    V.
  \end{equation}
Then  $\mrho$ is a representation of the sub-adjacent Lie algebra $(V,[\cdot,\cdot]_T)$ on the vector space $\g$.
  \end{lem}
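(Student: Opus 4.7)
My plan is to verify directly that $\mrho([u,v]_T) = [\mrho(u),\mrho(v)]$ as endomorphisms of $\g$, by expanding both sides and collapsing them using the $\huaO$-operator identity \eqref{eq:defiO}, the Jacobi identity in $\g$, and the fact that $\rho$ is a representation. First I would recall from Theorem~\ref{thm:opL} (and the discussion preceding this lemma) that $T$ is a Lie algebra homomorphism $(V,[\cdot,\cdot]_T)\to\g$, i.e.\ $T[u,v]_T=[Tu,Tv]$, which is the clean algebraic consequence of the $\huaO$-operator equation that will make the two $\g$-brackets match up.

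Concretely, I would start by computing
\[
\mrho([u,v]_T)(x)=[[Tu,Tv],x]+T\rho(x)\bigl(\rho(Tu)v-\rho(Tv)u\bigr),
\]
and then expand $\mrho(u)\mrho(v)(x)-\mrho(v)\mrho(u)(x)$, producing four types of summands: pure $\g$-brackets of the form $[Tu,[Tv,x]]$, mixed terms $[Tu,T\rho(x)v]$ (and its swap), and two families of $T\rho(\cdot)(\cdot)$ terms arising from applying $\mrho(u)$ to the $T\rho(x)(v)$-piece of $\mrho(v)(x)$. The pure $\g$-bracket part of the difference $[\mrho(u),\mrho(v)](x)-\mrho([u,v]_T)(x)$ vanishes by the Jacobi identity in $\g$.

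For the remaining terms, the key move is to apply the $\huaO$-operator equation to rewrite $[Tu,T\rho(x)v]$ and $[Tv,T\rho(x)u]$ as $T(\rho(Tu)\rho(x)v-\rho(T\rho(x)v)u)$ and similarly for the swap. This produces new $T\rho(T\rho(x)\cdot)(\cdot)$ terms that cancel in pairs against the ones coming from expanding $\mrho(u)(T\rho(x)v)$ and $\mrho(v)(T\rho(x)u)$. What survives splits cleanly into a $(v)$-family
\[
T\bigl(\rho(Tu)\rho(x)-\rho(x)\rho(Tu)-\rho([Tu,x])\bigr)(v)
\]
and a symmetric $(u)$-family; each vanishes because $\rho$ is a representation, i.e.\ $[\rho(Tu),\rho(x)]=\rho([Tu,x])$.

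The main obstacle, and really the only one, is bookkeeping: roughly a dozen terms must be tracked and matched in the correct pairs. I would organize the calculation by grouping terms according to whether they end in $(u)$ or $(v)$, so that the $\huaO$-operator identity and the representation identity can be applied symmetrically. No additional structural input is needed beyond \eqref{eq:defiO}, Jacobi in $\g$, and the defining property of $\rho$, so the proof is a bounded computation once the grouping is set up.
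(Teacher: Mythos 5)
Your proposal is correct and follows essentially the same route as the paper's proof: a direct expansion of $[\mrho(u),\mrho(v)]-\mrho([u,v]_T)$ applied to $x$, with the pure bracket terms killed by the Jacobi identity, the $\huaO$-operator identity applied to $[Tu,T\rho(x)v]$ (and its swap) to cancel the $T\rho(T\rho(x)\cdot)(\cdot)$ terms, and the residue vanishing because $[\rho(Tu),\rho(x)]=\rho([Tu,x])$. The grouping into a $(u)$-family and a $(v)$-family is exactly how the paper's computation resolves.
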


  \begin{proof}
  Since $(V;\rho)$ is a
representation of the Lie algebra $\g$, by
Eq.~\eqref{eq:defiO}, we have
  \vspace{-.1cm}
  \begin{eqnarray*}
    &&([\mrho(u),\mrho(v)]-\mrho([u,v]_T))(x)\\
    &=&\mrho(u)([Tv,x]+T\rho(x)(v))-\mrho(v)([Tu,x]+T\rho(x)u)-[T[u,v]_T,x]-T\rho(x)([u,v]_T)\\
    &=&[Tu,[Tv,x]]+T\rho([Tv,x])(u)+[Tu,T\rho(x)(v)]+T\rho(T\rho(x)(v))(u)\\
    &&-[Tv,[Tu,x]]-T\rho([Tu,x])(v)-[Tv,T\rho(x)(u)]-T\rho(T\rho(x)(u))(v)\\
    &&-[[Tu,Tv],x]-T\rho(x)\rho(Tu)(v)+T\rho(x)\rho(Tv)(u)\\
    &=&T\rho([Tv,x])(u)+[Tu,T\rho(x)(v)]+T\rho(T\rho(x)(v))(u) -T\rho([Tu,x])(v)\\
    &&-[Tv,T\rho(x)(u)]-T\rho(T\rho(x)(u))(v) -T\rho(x)\rho(Tu)(v)+T\rho(x)\rho(Tv)(u)\\
    &=&T[\rho(Tu),\rho(x)](v)-T\rho([Tu,x])(v)+T\rho([Tv,x])(u)-T[\rho(Tv),\rho(x)](u)\\
    &=&0. 
    \vspace{-.1cm}
  \end{eqnarray*}
  \vspace{-.1cm}
  Therefore, $\mrho$ is a representation.
  \end{proof}

\begin{rmk}
    Here we provide an intrinsic interpretation
    of the above representation $\mrho$ by means of the deformed Lie bracket by a Nijenhuis operator. First recall that a Nijenhuis operator on a Lie algebra $(\frkh,[\cdot,\cdot])$ is a linear map $N:\h\longrightarrow\h$ satisfying
    $$
    [Na,Nb]=N([Na,b]+[a,Nb]-N[a,b]),\quad\forall a,b\in\h.
    $$
    Then $(\h,[\cdot,\cdot]_N)$ is a Lie algebra, where the  bracket $[\cdot,\cdot]_N$ is given by
    $$
    [a,b]_N=[Na,b]+[a,Nb]-N[a,b],\;\;\forall a,b\in \h.
    $$
    Now it is straightforward to see that if $T:V\longrightarrow\g$ is an \oop on a Lie algebra $\g$ with respect to a representation $(V;\rho)$, then $N_T=\left(\begin{array}{cc}0&T\\0&0\end{array}\right)$ is a Nijenhuis operator
     on the semidirect product Lie algebra $\g\ltimes_\rho V$. Therefore there is a Lie algebra structure on $V\oplus \g\cong \g\oplus V$ defined by
    $$
    [x+u,y+v]_{N_T}=[u,v]_T+\mrho(u)(y)-\mrho(v)(x),
     \;\;\forall x,y\in \g, u,v\in V,
    $$
    which implies that $\mrho$ is a representation of the sub-adjacent Lie algebra $(V,[\cdot,\cdot]_T)$ on the vector space $\g$.
\vspace{-.1cm}  
\end{rmk}

Let $d_\mrho: \Hom(\wedge^kV,\g)\longrightarrow
\Hom(\wedge^{k+1}V,\g)$ be the corresponding Chevalley-Eilenberg
coboundary operator. More precisely, for all $f\in
\Hom(\wedge^kV,\g)$ and $u_1,\cdots,u_{k+1}\in V$, we have
\begin{eqnarray}
&& d_\mrho f(u_1,\cdots,u_{k+1})\notag\\
&:=&\sum_{i=1}^{k+1}(-1)^{i+1}[Tu_i,f(u_1,\cdots,\hat{u_i},\cdots, u_{k+1})]+\sum_{i=1}^{k+1}(-1)^{i+1}T\rho(f(u_1,\cdots,\hat{u_i},\cdots, u_{k+1}))(u_i)\label{eq:odiff}\\
&&+\sum_{1\le i<j\le k+1}(-1)^{i+j}f(\rho(Tu_i)(u_j)-\rho(Tu_j)(u_i),u_1,\cdots,\hat{u_i},\cdots,\hat{u_j},\cdots, u_{k+1}). \notag
\end{eqnarray}
It is obvious that $x\in\g$ is closed if and only if
         $$
           T\circ\rho(x)=\ad_x\circ T,
         $$
               and   $f\in \huaC^1(V,\g)$  is closed  if and only if
               $$
              [Tu,f(v)]-[Tv,f(u)]-T(\rho(f(u))(v)-\rho(f(v))(u))-f(\rho(Tu)(v)-\rho(Tv)(u))=0.
               $$

Comparing the coboundary operators $d_\mrho$ given above and the
operators $d_T=\Courant{T,\cdot}$ introduced in Theorem~\ref{thm:deformation} from the Maurer-Cartan element $T$, we have

\begin{pro}\label{pro:danddT}
 Let $T:V\longrightarrow\g$ be an $\huaO$-operator on a Lie algebra $\g$ with respect to a representation $(V;\rho)$. Then we have
 $$
 d_\mrho f=(-1)^kd_Tf,\quad \forall f\in \Hom(\wedge^kV,\g).
 $$
\end{pro}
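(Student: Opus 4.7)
The plan is to prove the identity by expanding both sides directly and matching term-by-term. Since $T\in\Hom(V,\g)$ has degree $1$ and $f\in\Hom(\wedge^k V,\g)$ has degree $k$ in the graded Lie algebra $(\huaC^*(V,\g),\Courant{\cdot,\cdot})$, I would apply formula \eqref{o-bracket} with $P=T$ (so $n=1$) and $Q=f$ (so $m=k$). This expresses $d_Tf=\Courant{T,f}$ as the sum of three blocks, indexed respectively by $\mathbb S_{(k,1,0)}$, $\mathbb S_{(1,1,k-1)}$, and $\mathbb S_{(1,k)}$, carrying the overall prefactor $(-1)^{mn}=(-1)^k$ on the second and third blocks. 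The goal is to show each of these three blocks is $(-1)^k$ times one of the three terms in the Chevalley-Eilenberg formula \eqref{eq:odiff}.

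First, I would compute the $\mathbb S_{(k,1,0)}$-sum. Such a shuffle is completely determined by the choice of the index $i:=\sigma(k+1)$, with the remaining indices listed in natural order; the standard sign count gives $(-1)^\sigma=(-1)^{k+1-i}$. Summing then produces $\sum_{i=1}^{k+1}(-1)^{k+1-i}T\rho\big(f(u_1,\ldots,\hat{u_i},\ldots,u_{k+1})\big)(u_i)$, which is $(-1)^k$ times the $T\rho$-term in $d_\mrho f$. Second, the $\mathbb S_{(1,k)}$-sum is analogous: each shuffle is indexed by $i=\sigma(1)$ with sign $(-1)^{i-1}$, and after including the prefactor $(-1)^k$ from \eqref{o-bracket} this block equals $(-1)^k\sum_{i=1}^{k+1}(-1)^{i+1}[Tu_i,f(u_1,\ldots,\hat{u_i},\ldots,u_{k+1})]$, matching the bracket-term in $d_\mrho f$.

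The last and most delicate step is the $\mathbb S_{(1,1,k-1)}$-sum, which must produce the antisymmetric cross-term $\sum_{i<j}(-1)^{i+j}f\big(\rho(Tu_i)(u_j)-\rho(Tu_j)(u_i),u_1,\ldots,\hat{u_i},\ldots,\hat{u_j},\ldots,u_{k+1}\big)$. A shuffle here is determined by an ordered pair $(\sigma(1),\sigma(2))$ of distinct indices from $\{1,\ldots,k+1\}$, together with the natural order on the remaining $k-1$. Splitting into the two cases $\sigma(1)<\sigma(2)$ and $\sigma(1)>\sigma(2)$ and computing the Koszul signs carefully, one pairs the terms to obtain precisely the antisymmetrized combination $\rho(Tu_i)u_j-\rho(Tu_j)u_i$ weighted by $(-1)^{i+j}$; the overall prefactor $-(-1)^k$ from the second summand of \eqref{o-bracket} combines with the sign from bringing the bracket-argument to the front of $f$ to yield the required $(-1)^k$. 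The main obstacle throughout is the combinatorial sign bookkeeping: verifying that the Koszul signs of the shuffles, the prefactor $(-1)^{mn}$ in \eqref{o-bracket}, and the positional signs in \eqref{eq:odiff} all combine to produce exactly the single overall sign $(-1)^k$, with no conceptual difficulty beyond this reconciliation.
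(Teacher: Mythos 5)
Your proposal is correct and follows essentially the same route as the paper: expand $d_Tf=\Courant{T,f}$ via Eq.~\eqref{o-bracket} with $n=1$, $m=k$, enumerate the $\mathbb S_{(k,1,0)}$-, $\mathbb S_{(1,1,k-1)}$- and $\mathbb S_{(1,k)}$-shuffles explicitly, and match each block (with its Koszul signs and the $(-1)^{mn}$ prefactors) against the three terms of $d_\mrho f$, splitting the middle sum into the cases $\sigma(1)<\sigma(2)$ and $\sigma(1)>\sigma(2)$ to produce the antisymmetrized term. The sign computations you outline ($(-1)^{\sigma}=(-1)^{k+1-i}$, $(-1)^{i-1}$, and the pairing yielding $(-1)^{i+j}$) are exactly those carried out in the paper's proof.
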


\noindent \emph{Proof.}
Indeed, for all $u_1,u_2,\cdots,u_{k+1}\in V$ and $f\in \Hom(\wedge^kV,\g)$ , we have
\begin{eqnarray*}
&&(-1)^k(d_Tf)(u_1,u_2,\cdots,u_{k+1})\\ &=&(-1)^{k}\Courant{T,f}(u_1,u_2,\cdots,u_{k+1})\\
&=&\sum_{\sigma\in \mathbb S_{(k,1,0)}}(-1)^{k}(-1)^{\sigma}T(\rho(f(u_{\sigma(1)},\cdots,u_{\sigma(k)}))u_{\sigma(k+1)})\\
&&-\sum_{\sigma\in \mathbb S_{(1,1,k-1)}}(-1)^{\sigma}f(\rho(Tu_{\sigma(1)})u_{\sigma(2)},\cdots,u_{\sigma(k+1)}) +\sum_{\sigma\in \mathbb S_{(1,k)}}(-1)^{\sigma}[Tu_{\sigma(1)},f(u_{\sigma(2)},\cdots,u_{\sigma(k+1)})]\\
&=&\sum_{i=1}^{k+1}(-1)^k(-1)^{k+1-i}T(\rho(f(u_1,\cdots,\hat{u_{i}},\cdots,u_{k+1}))u_{i})\\
&&-\sum_{1\le i<j\le k+1}(-1)^{i-1+j-2} f(\rho(Tu_{i})u_{j},u_1,\cdots,\hat{u_{i}},\cdots,\hat{u_{j}}, \cdots,u_{k+1})\\
&&-\sum_{1\le j<i\le k+1}(-1)^{j-1+i-1} f(\rho(Tu_{i})u_{j},u_1,\cdots,\hat{u_{j}},\cdots,\hat{u_{i}}, \cdots,u_{k+1})\\
&&+\sum_{i=1}^{k+1}(-1)^{i-1}[Tu_{i},f(u_{1},\cdots,\hat{u_i},\cdots,u_{k+1})] \\
&=&\sum_{i=1}^{k+1}(-1)^{i+1}T(\rho(f(u_1,\cdots,\hat{u_{i}},\cdots,u_{k+1}))u_{i}) +\sum_{i=1}^{k+1}(-1)^{i+1}[Tu_{i},f(u_{1},\cdots,\hat{u_i},\cdots,u_{k+1})]\\
&&+\sum_{1\le i<j\le k+1}(-1)^{i+j} f(\rho(Tu_i)(u_j)-\rho(Tu_j)(u_i),u_1,\cdots,\hat{u_i}, \cdots,\hat{u_j},\cdots, u_{k+1})\\
&=&(d_\mrho f)(u_1,u_2,\cdots,u_{k+1}).
\hspace{10.5cm} \qed
\end{eqnarray*}

\begin{defi}
Let $T$ be an $\huaO$-operator on a Lie algebra $\g$ with respect to a representation $(V;\rho)$. Denote by $(\huaC^*(V,\g)=\oplus _{k=0}^{\dim(V)}\huaC^k(V,\g),d_\mrho)$ the above cochain complex. Denote the set of $k$-cocycles by $\huaZ^k(V,\g)$ and the set of $k$-coboundaries by $\huaB^k(V,\g)$. Denote by
  \begin{equation}
  \huaH^k(V,\g)=\huaZ^k(V,\g)/\huaB^k(V,\g), \quad k \geq 0,
  \label{eq:ocoh}
  \end{equation}
the $k$-th cohomology group, called the {\bf $k$-th cohomology group for the \oop $T$}.
\label{de:opcoh}
\end{defi}

We will use these cohomology groups to characterize infinitesimal
and formal deformations of \oops in later sections. See
Theorems \ref{thm:iso3} and \ref{thm:extendable} in particular.
For now, we relate these cohomology
groups to the cohomology groups of the pre-Lie algebra
$(V,\cdot_T)$ obtained from the \oop $T$ by Theorem~\ref{thm:opL}.

\begin{defi}
Let $(V,\cdot_V)$ be a pre-Lie algebra and $W$  a vector space. A
{\bf representation} of $V$ on $W$ is a triple $(W,\rho,\mu)$,
where $\rho:V\longrightarrow \gl(W)$ is a representation of the
sub-adjacent Lie algebra $V^c$ on $W $ and
$\mu:V\longrightarrow \gl(W)$ is a linear map satisfying
\vspace{-.1cm}
\begin{eqnarray}
 \rho(x)\mu(y)u-\mu(y)\rho(x)u=\mu(x\cdot_V y)u-\mu(y)\mu(x)u, \quad \forall~x,y\in V,~ u\in W.
\mlabel{eq:repcond2}
\end{eqnarray}
\end{defi}
\vspace{-.2cm}
Define
\vspace{-.2cm}
$$L:V\longrightarrow \gl(V), \ x\mapsto L_x, \ L_xy=x\cdot_V y;\quad  R:V\longrightarrow \gl(V),\ x\mapsto R_x,\ R_xy=y\cdot_V x, \quad \forall x,y\in V.
\vspace{-.1cm}
$$
Then $(V;L,R)$ is a representation of $V$, called the {\bf regular
representation} of $V$. Note that $L$ also gives a representation
of the sub-adjacent Lie algebra $V^c$ on $V$. See \cite{Bu,Dz} for more details.

The cohomology complex for a pre-Lie algebra $(V,\cdot_V)$ with a representation $(W;\rho,\mu)$ is given as follows \cite{Dz}.
The set of $n$-cochains is given by
$\Hom(\wedge^{n-1}V\otimes V,W),\
n\geq 1.$  The coboundary operator $\dM:\Hom(\wedge^{n-1}V\otimes V,W)\longrightarrow \Hom(\wedge^{n}V\otimes V,W)$ is given by
\vspace{-.3cm}
 \begin{eqnarray}
 \nonumber(\dM f)(x_1, \cdots,x_{n+1})
 \nonumber&=&\sum_{i=1}^{n}(-1)^{i+1}\rho(x_i)f(x_1, \cdots,\hat{x_i},\cdots,x_{n+1})\\
\mlabel{eq:cobold} &&+\sum_{i=1}^{n}(-1)^{i+1}\mu(x_{n+1})f(x_1, \cdots,\hat{x_i},\cdots,x_n,x_i)\\
 \nonumber&&-\sum_{i=1}^{n}(-1)^{i+1}f(x_1, \cdots,\hat{x_i},\cdots,x_n,x_i\cdot_V x_{n+1})\\
\nonumber &&+\sum_{1\leq i<j\leq n}(-1)^{i+j}f([x_i,x_j]^c,x_1,\cdots,\hat{x_i},\cdots,\hat{x_j},\cdots,x_{n+1}),
\mlabel{eq:pLcoh}
\vspace{-.1cm}
\end{eqnarray}
for all $f\in \Hom(\wedge^{n-1}V\otimes V,W)$, $x_i\in V,~i=1,\cdots,n+1$. We use   $\dr$ to denote the coboundary
operator  associated to the regular representation and we obtain the cochain complex \vspace{-.1cm}
$$\CV^*(V,V):=\oplus_{n=1}^{\dim(V)+1} \Hom(\wedge^{n-1}V\otimes V,V).
\vspace{-.1cm}
$$
We denote the corresponding $n$-th cohomology group by
$H_{\rm reg}^n(V,V)$ and
$H_{\rm reg}(V,V):=\oplus_{n}H_{\rm reg}^n(V,V).$

\begin{thm}
Let $T$ be an $\huaO$-operator on a Lie algebra $\g$ with respect to a representation $(V;\rho)$. Then $\Phi$, defined in
Eq.~\eqref{eq:phi}, is a homomorphism of cochain complexes from
$(\huaC^*(V,\g),d_\mrho)$ to $(\CV^*(V,V),\dr)$, that is, $\dr
\Phi = \Phi d_\mrho$. Consequently, $\Phi$ induces a homomorphism
$\Phi_*:\huaH^k(V,\g)\longrightarrow H^{k+1}_{\rm reg}(V,V)$
between the corresponding cohomology groups.
\mlabel{thm:morphismcohomology}
\end{thm}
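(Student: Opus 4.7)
The plan is to chain together three structural facts already available in the excerpt. First, for $f \in \Hom(\wedge^k V, \g)$, Proposition \ref{pro:danddT} gives $d_\mrho f = (-1)^k \Courant{T, f}$. Since the (unlabeled) proposition just before Remark \ref{rmk:O-pre} exhibits $\Phi$ as a graded Lie algebra homomorphism from $(\huaC^*(V,\g), \Courant{\cdot,\cdot})$ to $(\CV^*(V,V), [\cdot,\cdot]^C)$, applying $\Phi$ and using that it preserves the bracket yields
\[
\Phi(d_\mrho f) \;=\; (-1)^k\, [\Phi(T), \Phi(f)]^C.
\]
Second, direct inspection of \eqref{eq:defiphi} and Theorem \ref{thm:opL} identifies $\Phi(T)$ with the pre-Lie multiplication $\cdot_T$ on $V$, i.e.\ $\Phi(T)(u,v) = \rho(Tu)(v) = u \cdot_T v$.

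The crux is then the pre-Lie analogue of Proposition \ref{pro:danddT}: for any $g \in \Hom(\wedge^k V \otimes V, V)$,
\[
\dr g \;=\; (-1)^k\, [\cdot_T, g]^C,
\]
where $\dr$ is the coboundary \eqref{eq:pLcoh} associated to the regular representation $(V;L,R)$ of the pre-Lie algebra $(V, \cdot_T)$. Granting this, the three facts combine cleanly:
\[
\dr \Phi(f) \;=\; (-1)^k [\cdot_T, \Phi(f)]^C \;=\; (-1)^k [\Phi(T), \Phi(f)]^C \;=\; \Phi(d_\mrho f),
\]
so $\Phi$ is a cochain map and descends to $\Phi_* \colon \huaH^k(V,\g) \to H^{k+1}_{\rm reg}(V,V)$.

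To establish the identity $\dr g = (-1)^k [\cdot_T, g]^C$ I would expand $[\cdot_T, g]^C = \cdot_T \circ g - (-1)^k g \circ \cdot_T$ through \eqref{eq:pLbrac}, taking $(n, m) = (1, k)$ for $\cdot_T \circ g$ and $(n, m) = (k, 1)$ for $g \circ \cdot_T$, and match the resulting sums against the four summands of \eqref{eq:pLcoh}. The $(1,k)$-unshuffles in $\cdot_T \circ g$ produce the $L$-term (acting on $g(\ldots, \hat{u_i}, \ldots, u_{k+2})$ by $L_{u_i}$), the $(k,1,0)$-unshuffles produce the $R$-term (acting on $g(\ldots, \hat{u_i}, \ldots, u_{k+1}, u_i)$ by $R_{u_{k+2}}$), the $(k,1)$-unshuffles in $g \circ \cdot_T$ produce the inner-multiplication term $g(\ldots, u_i \cdot_T u_{k+2})$, and the $(1,1,k-1)$-unshuffles in $g \circ \cdot_T$ pair up by swapping the first two blocks: the contributions $g(u_i \cdot_T u_j, \ldots)$ and $g(u_j \cdot_T u_i, \ldots)$ combine into the sub-adjacent Lie bracket $g([u_i, u_j]_T, \ldots)$.

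The main obstacle is purely combinatorial: tracking the signs so that the unshuffle signs like $(-1)^{k+1-i}$ and $(-1)^{i-1}$, when multiplied by the prefactor $(-1)^k$ coming from our identity and the $-(-1)^k$ in the graded commutator, collapse to exactly the $(-1)^{i+1}$ and $(-1)^{i+j}$ appearing in \eqref{eq:pLcoh}. I would verify each of the four matchings separately; no deeper idea is needed beyond careful bookkeeping of the placement of the singleton block(s) within each $(i_1,\dots,i_l)$-unshuffle.
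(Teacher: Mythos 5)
Your proposal is correct but takes a genuinely different, more structural route than the paper's. The paper proves $\dr\Phi=\Phi d_\mrho$ by a direct element-wise expansion of $(\dr\Phi(f))(u_1,\dots,u_{k+2})$ via Eq.~\eqref{eq:cobold}, using the representation identity $\rho([x,y])=[\rho(x),\rho(y)]$ to recombine the $L$-term with the inner-multiplication term into $\rho([Tu_i,f(\dots)])$, after which $\Phi(d_\mrho f)$ is read off. You instead factor the statement through the Maurer--Cartan formalism: $d_\mrho=(-1)^k\Courant{T,\cdot}$ (Proposition~\ref{pro:danddT}), $\Phi$ intertwines the brackets, $\Phi(T)=\cdot_T$, and the pre-Lie analogue $\dr g=(-1)^k[\cdot_T,g]^C$. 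This is more conceptual --- it exhibits the theorem as the compatibility of the two ``differential equals bracket with the Maurer--Cartan element'' descriptions under the graded Lie algebra morphism $\Phi$, and it makes the commutative diagram in the introduction precise on the pre-Lie side as well --- but it does not actually save work: the key lemma $\dr g=(-1)^k[\cdot_T,g]^C$ is nowhere stated or proved in the paper, and its verification (your four term-matchings, whose identification of the $L$-, $R$-, inner-multiplication and $[\cdot,\cdot]^c$-contributions with the corresponding unshuffle sums is correct) is a sign-chasing computation of essentially the same length and nature as the paper's direct proof. The lemma is true --- it is the standard identification of the pre-Lie deformation differential with the regular-representation coboundary, cf.~\cite{Dz,WBLS}, and the case $k=0$ checks out immediately, confirming the sign $(-1)^k$ is the right one for your chain of equalities to close --- so your argument is sound once that verification is written out; as submitted it is a correct plan with the decisive computation still outstanding.
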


\noindent\emph{Proof.} Indeed, for all $u_1,u_2,\cdots,u_{k+2}\in V$ and $f\in\Hom(\wedge^kV,\g)$ , we have
\begin{eqnarray*}
&&(\dr \Phi(f))(u_1,u_2,\cdots,u_{k+2})\\
&=&\sum_{i=1}^{k+1}(-1)^{i+1}u_i\cdot_T \Phi(f)(u_1, \cdots,\hat{u_i},\cdots,u_{k+1},u_{k+2}) \\ &&+\sum_{i=1}^{k+1}(-1)^{i+1}\Phi(f)(u_1, \cdots,\hat{u_i},\cdots,u_{k+1},u_i)\cdot_T u_{k+2}\\
&&-\sum_{i=1}^{k+1}(-1)^{i+1}\Phi(f)(u_1, \cdots,\hat{u_i},\cdots,u_{k+1},u_i\cdot_T u_{k+2})\\
&&+\sum_{1\leq i<j\leq k+1}(-1)^{i+j}\Phi(f)([u_i,u_j]^c,u_1,\cdots,\hat{u_i},\cdots,\hat{u_j},\cdots,u_{k+1},u_{k+2})\\
&=&\sum_{i=1}^{k+1}(-1)^{i+1}\rho(Tu_i)\big(\rho(f(u_1, \cdots,\hat{u_i},\cdots,u_{k+1}))(u_{k+2})\big)\\
&&+\sum_{i=1}^{k+1}(-1)^{i+1}\rho(T\rho(f(u_1,\cdots,\hat{u_i},\cdots,u_{k+1}))(u_i))(u_{k+2})\\
&&-\sum_{i=1}^{k+1}(-1)^{i+1}\rho(f(u_1, \cdots,\hat{u_i},\cdots,u_{k+1}))(\rho(Tu_i)(u_{k+2}))\\
&&+\sum_{1\leq i<j\leq k+1}(-1)^{i+j}\rho(f([u_i,u_j]^c,u_1,\cdots,\hat{u_i},\cdots,\hat{u_j},\cdots,u_{k+1}))(u_{k+2})\\
&=&\sum_{i=1}^{k+1}(-1)^{i+1}\rho([Tu_i,f(u_1, \cdots,\hat{u_i},\cdots,u_{k+1})])(u_{k+2})\\
&&+\sum_{i=1}^{k+1}(-1)^{i+1}\rho(T\rho(f(u_1,\cdots,\hat{u_i},\cdots,u_{k+1}))(u_i))(u_{k+2})\\
&&+\sum_{1\leq i<j\leq k+1}(-1)^{i+j}\rho(f([u_i,u_j]^c,u_1,\cdots,\hat{u_i},\cdots,\hat{u_j},\cdots,u_{k+1}))(u_{k+2})\\
&=&\Phi(d_\mrho f)(u_1,u_2,\cdots,u_{k+2}). \hspace{10cm} \qed
\end{eqnarray*}

\begin{rmk}
If $\rho$ is faithful, then $\Phi$ is injective, realizing
the complex $(\huaC^*(V,\g),d_\mrho)$ as a subcomplex of $(\CV^*(V,V),\dr)$.
\end{rmk}

\section{Infinitesimal deformations of an $\huaO$-operator}
\mlabel{sec:infdef}

In  this section, we study infinitesimal deformations of an
$\huaO$-operator using the cohomology  theory  given in the
previous section. In particular, we introduce the notion of a
Nijenhuis element associated to  an $\huaO$-operator, which gives
rise to a trivial infinitesimal deformation of the
$\huaO$-operator. Their relationship with the infinitesimal deformations of
the associated pre-Lie algebra is also studied.

By Remark~\ref{rmk:O-pre}, there is a close relationship between
the set of \oops and the set of pre-Lie algebras as Maurer-Cartan
elements in the respective graded Lie algebras. Hence it is
natural to consider the relationships between two \oops in terms
of the ones between the corresponding pre-Lie algebras. On the
other hand, the classification of pre-Lie
algebras in the sense of isomorphism is interpreted as the
classification of bijective 1-cocycles in the sense of equivalence
(\cite{Bai2}) or the classification of \'etale affine
representations in the sense of equivalence (\cite{Bau}).
Motivated by these two types of equivalences, we give

    \begin{defi}
      Let $T$ and $T'$ be $\huaO$-operators on a Lie algebra $\g$ with respect to a representation $(V;\rho)$. A {\bf homomorphism} from $T'$ to $T$ consists of a Lie algebra homomorphism  $\phi_\g:\g\longrightarrow\g$ and a linear map $\phi_V:V\longrightarrow V$ such that
      \begin{eqnarray}
        T\circ \phi_V&=&\phi_\g\circ T',\mlabel{defi:isocon1}\\
        \phi_V\rho(x)(u)&=&\rho(\phi_\g(x))(\phi_V(u)),\quad\forall x\in\g, u\in V.\mlabel{defi:isocon2}
      \end{eqnarray}
      In particular, if both $\phi_\g$ and $\phi_V$ are  invertible,  $(\phi_\g,\phi_V)$ is called an  {\bf isomorphism}  from $T'$ to $T$.
    \mlabel{defi:isoO}
    \end{defi}

We refer the reader to~\cite{CP} for a weaker version of isomorphism up to a scalar.

\begin{pro}
Let $T$ and $T'$ be two $\huaO$-operators on a Lie algebra $\g$ with respect to a representation $(V;\rho)$ and $(\phi_\g,\phi_V)$   a homomorphism (resp. an isomorphism) from $T'$ to $T$. Then $\phi_V$ is a homomorphism (resp. an isomorphism) of pre-Lie algebras from $(V,\cdot_{T'})$ to $(V,\cdot_{T})$.
\end{pro}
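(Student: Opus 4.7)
The plan is to verify directly that $\phi_V$ intertwines the two pre-Lie products, i.e., that $\phi_V(u \cdot_{T'} v) = \phi_V(u) \cdot_T \phi_V(v)$ for all $u, v \in V$. By Theorem~\ref{thm:opL}, this unpacks to
$$\phi_V(\rho(T'u)(v)) = \rho(T\phi_V(u))(\phi_V(v)),$$
so the whole proposition reduces to comparing these two expressions using only the two compatibility conditions~\eqref{defi:isocon1} and~\eqref{defi:isocon2} in Definition~\ref{defi:isoO}. Note that neither the Lie bracket on $\g$ nor the defining relation~\eqref{eq:defiO} for $T$ or $T'$ enters the argument; what is being checked is just compatibility of the two induced products on $V$.

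The natural order of attack is: first apply~\eqref{defi:isocon2} with $x = T'u \in \g$ to push $\phi_V$ through $\rho$, converting the left-hand side to $\rho(\phi_\g(T'u))(\phi_V(v))$; then invoke~\eqref{defi:isocon1}, which asserts $\phi_\g \circ T' = T \circ \phi_V$, to rewrite $\phi_\g(T'u)$ as $T(\phi_V(u))$, delivering the right-hand side. For the isomorphism statement, bijectivity of $\phi_V$ is already part of the hypothesis and transfers automatically to the pre-Lie setting, so no additional work is required.

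There is essentially no obstacle here, since the two conditions in Definition~\ref{defi:isoO} were tailored precisely to make this diagram commute. The only subtlety worth flagging is that the argument is deliberately one-sided in $\phi_\g$: we only ever use $\phi_\g$ as an intertwiner from $T'$ to $T$ and never need to invert it, which is why the conclusion holds for homomorphisms and not only isomorphisms. This also clarifies why the notion of homomorphism for $\huaO$-operators must involve both $\phi_\g$ on $\g$ and $\phi_V$ on $V$ rather than a map on $V$ alone: the coupling between the two is exactly what transports $\cdot_{T'}$ to $\cdot_T$.
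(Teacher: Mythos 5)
Your proposal is correct and matches the paper's own proof exactly: both compute $\phi_V(u\cdot_{T'}v)=\phi_V\rho(T'u)(v)=\rho(\phi_\g(T'u))(\phi_V(v))=\rho(T\phi_V(u))(\phi_V(v))=\phi_V(u)\cdot_T\phi_V(v)$, using Eq.~\eqref{defi:isocon2} with $x=T'u$ followed by Eq.~\eqref{defi:isocon1}. Nothing further is needed.
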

\noindent \emph{Proof.} This is because, for all $u,v\in V$, we
have
\begin{eqnarray*}
\phi_V(u\cdot_{T'} v)&=&\phi_V
\rho(T'u)(v)=\rho(\phi_\g(T'u))(\phi_V(v))=\rho(T(\phi_V(u)))(\phi_V(v))
=\phi_V(u)\cdot_T \phi_V(v). \hspace{.5cm} \qed
\end{eqnarray*}

    \begin{defi}
    Let $T$   be an $\huaO$-operator  on a Lie algebra $\g$ with respect to a representation $(V;\rho)$ and $\frkT:V\longrightarrow\g$ a linear map. If  $T_t=T+t\frkT$ is still an $\huaO$-operator  on the Lie algebra $\g$ with respect to the representation $(V;\rho)$ for all $t$, we say that $\frkT$ generates a {\bf one-parameter infinitesimal deformation} of the $\huaO$-operator $T$.
    \end{defi}
It is direct to check that $T_t=T+t\frkT$ is a one-parameter
infinitesimal deformation of an \oop $T$ if and only if
 for any $u,v\in V$,
\vspace{-.1cm}
\begin{eqnarray}
~[Tu,\frkT v]+[\frkT u,Tv]&=&T(\rho(\frkT u)(v)-\rho(\frkT v)(u))+\frkT(\rho(Tu)(v)-\rho(Tv)(u)),\mlabel{eq:deform1}\\
~[\frkT u,\frkT v]&=&\frkT(\rho(\frkT u)(v)-\rho(\frkT v)(u)).
\mlabel{eq:deform2}
\vspace{-.1cm}
\end{eqnarray}
Note that Eq.~\eqref{eq:deform1} means that $\frkT$ is a 1-cocycle of the sub-adjacent Lie algebra $(V,[\cdot,\cdot]_T)$ with coefficients in $\g$ and Eq.~\eqref{eq:deform2} means that $\frkT$ is an $\huaO$-operator on the Lie algebra $\g$ associated to the representation $(V;\rho)$.

Now turning to a pre-Lie algebra $(V,\cdot_V)$, let  $\omega:\otimes^2V\longrightarrow V$ be a linear map. If for any $t\in\K$, the multiplication $\cdot_t$ defined by
\vspace{-.3cm}
$$
u\cdot_tv:=u\cdot_V v+t\omega(u,v), \;\forall u,v\in V,
\vspace{-.2cm}
$$
also gives a pre-Lie algebra structure, we say that $\omega$ generates a {\bf one-parameter infinitesimal deformation} of the pre-Lie algebra $(V,\cdot_V)$.

The two types of infinitesimal deformations are related as follows.

\begin{pro}
 If $\frkT$ generates a one-parameter infinitesimal deformation of an $\huaO$-operator $T$ on a Lie algebra $\g$ with respect to a representation $(V;\rho)$, then the product $\omega_\frkT$ on $V$ defined by
\vspace{-.2cm}
   $$
   \omega_\frkT(u,v):=\rho(\frkT u)(v),\quad\forall u,v\in V,
   $$
generates a one-parameter infinitesimal deformation of the associated pre-Lie algebra $(V,\cdot_T)$.
\end{pro}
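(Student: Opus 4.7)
The plan is to invoke Theorem~\ref{thm:opL} directly, applied to the family of $\huaO$-operators $T_t = T + t\frkT$. By hypothesis, for each $t \in \K$ the map $T_t$ is an $\huaO$-operator on $\g$ with respect to $(V;\rho)$, so Theorem~\ref{thm:opL} endows $V$ with a pre-Lie algebra structure $\cdot_{T_t}$ defined by $u \cdot_{T_t} v := \rho(T_t u)(v)$.

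The only step left is to identify this family of products with $\cdot_T + t\omega_\frkT$. Using the linearity of $T \mapsto \rho(Tu)(v)$, we compute
\[
u \cdot_{T_t} v = \rho\bigl((T + t\frkT)u\bigr)(v) = \rho(Tu)(v) + t\rho(\frkT u)(v) = u \cdot_T v + t\,\omega_\frkT(u,v).
\]
Since $\cdot_{T_t}$ is a pre-Lie product for every $t$, this is precisely the assertion that $\omega_\frkT$ generates a one-parameter infinitesimal deformation of the pre-Lie algebra $(V,\cdot_T)$.

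No genuine obstacle arises; the statement is essentially an immediate corollary of Theorem~\ref{thm:opL} combined with the fact that the assignment $T \mapsto \cdot_T$ is linear in $T$. Should one wish to make the link with the cohomological picture of Theorem~\ref{thm:morphismcohomology} explicit, one could alternatively derive the pre-Lie deformation conditions on $\omega_\frkT$ from the cocycle condition~\eqref{eq:deform1} and the $\huaO$-operator condition~\eqref{eq:deform2} on $\frkT$ by pushing them through the cochain map $\Phi$; however, this longer route is not needed to establish the claim.
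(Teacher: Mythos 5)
Your proof is correct and is essentially the same as the paper's: both apply Theorem~\ref{thm:opL} to the $\huaO$-operator $T+t\frkT$ and expand $\rho((T+t\frkT)u)(v)$ by linearity to identify the resulting pre-Lie product with $\cdot_T + t\,\omega_\frkT$. No further comment is needed.
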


\begin{proof} Denote by $\cdot_t$ the corresponding pre-Lie algebra structure associated to the $\huaO$-operator $T+t\frkT$. Then we have
\vspace{-.1cm}
$$
u\cdot_t v=\rho((T+t\frkT)(u))(v)=\rho(Tu)(v)+t\rho(\frkT
u)(v)=u\cdot_T v+t\omega_\frkT (u,v), \forall u,v\in V,
\vspace{-.1cm}
$$
which implies that $\omega_\frkT$ generates a one-parameter infinitesimal deformation of $(V,\cdot_T)$.\end{proof}

\begin{cor}
 If $\frkT$ generates a one-parameter infinitesimal deformation of an $\huaO$-operator $T$ on a Lie algebra $\g$ with respect to a representation $(V;\rho)$, then the product $ \varpi_\frkT$ on $V$ defined by
\vspace{-.1cm}
   $$
    \varpi_\frkT(u,v):=\rho(\frkT u)(v)-\rho(\frkT v)(u),\quad\forall u,v\in V,
\vspace{-.1cm}
   $$
generates a one-parameter infinitesimal
deformation of the sub-adjacent Lie algebra $(V,[\cdot,\cdot]_T)$
of the associated pre-Lie algebra $(V,\cdot_T)$.
\end{cor}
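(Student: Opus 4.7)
The plan is to derive this corollary as an immediate consequence of the preceding proposition combined with the fact that the sub-adjacent Lie algebra construction is functorial in the pre-Lie algebra structure. First, by the preceding proposition, the hypothesis that $\frkT$ generates a one-parameter infinitesimal deformation of $T$ tells us that $\omega_\frkT(u,v) = \rho(\frkT u)(v)$ generates a one-parameter infinitesimal deformation of the associated pre-Lie algebra $(V, \cdot_T)$. Explicitly, $u \cdot_t v := u \cdot_T v + t\,\omega_\frkT(u,v)$ defines a pre-Lie algebra structure on $V$ for every $t \in \K$.

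Next I would apply the sub-adjacent construction pointwise in $t$. For each $t$, the pre-Lie algebra $(V, \cdot_t)$ gives rise to a Lie bracket
\[
[u,v]_t^c := u \cdot_t v - v \cdot_t u, \qquad \forall u,v \in V.
\]
Substituting the formula for $\cdot_t$ and expanding, a one-line computation shows
\[
[u,v]_t^c = [u,v]_T + t\bigl(\rho(\frkT u)(v) - \rho(\frkT v)(u)\bigr) = [u,v]_T + t\,\varpi_\frkT(u,v).
\]
Since $(V, \cdot_t)$ is a pre-Lie algebra for every $t$, its commutator $[\cdot,\cdot]_t^c$ is automatically a Lie bracket on $V$ for every $t$. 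This is precisely the statement that $\varpi_\frkT$ generates a one-parameter infinitesimal deformation of the sub-adjacent Lie algebra $(V, [\cdot,\cdot]_T)$.

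There is no real obstacle: the argument is essentially a symbol-pushing observation that commutator brackets commute with linear perturbations. No appeal to the explicit cocycle conditions \eqref{eq:deform1}--\eqref{eq:deform2} or to the cohomology theory of Section~\ref{sec:coh} is needed; the corollary is a formal consequence of the previous proposition together with the tautological fact that the commutator of a pre-Lie multiplication is always a Lie bracket.
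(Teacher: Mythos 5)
Your proposal is correct and is exactly the intended argument: the paper states this as an immediate corollary (with no written proof) of the preceding proposition, precisely because passing to the commutator bracket of the deformed pre-Lie structure $\cdot_t$ yields $[u,v]_T + t\,\varpi_\frkT(u,v)$, which is a Lie bracket for every $t$. Your computation and reasoning match this.
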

\vspace{-.1cm}

\begin{defi} Let $T$ be an \oop on a Lie algebra $\g$
with respect to a representation $(V;\rho)$. Two one-parameter
infinitesimal deformations $T^1_t=T+t\frkT_1$ and
$T^2_t=T+t\frkT_2$ are said to be {\bf equivalent} if there exists
an $x\in\g$ such that $({\Id}_\g+t\ad_x,{\Id}_V+t\rho(x))$ is a
homomorphism   from $T^2_t$ to $T^1_t$. In particular, a
one-parameter infinitesimal deformation $T_t=T+t\frkT$ of an
$\huaO$-operator $T$ is said to be {\bf trivial} if there exists
an $x\in\g$ such that $({\Id}_\g+t\ad_x,{\Id}_V+t\rho(x))$ is a
homomorphism   from $T_t$ to $T$.
\end{defi}

Let $({\Id}_\g+t\ad_x,{\Id}_V+t\rho(x))$ be a homomorphism from
$T^2_t$ to $T^1_t$. Then ${\Id}_\g+t\ad_x$ is a Lie algebra
endomorphism of $\g$. Thus, we have
$$
({\Id}_\g+t\ad_x)[y,z]=[({\Id}_\g+t\ad_x)(y),({\Id}_\g+t\ad_x)(z)], \;\forall y,z\in \g,
$$
which implies that $x$ satisfies
\begin{equation}
 [[x,y],[x,z]]=0,\quad \forall y,z\in\g.
 \mlabel{eq:Nij1}
 \end{equation}
Then by Eq.~\eqref{defi:isocon1}, we get
$$
(T+t\frkT_1)({\Id}_V+t\rho(x))(u)=({\Id}_\g+t\ad_x)(T+t\frkT_2)(u),\quad\forall u\in V,
$$
which implies
\begin{eqnarray}
 (\frkT_2-\frkT_1)(u)&=&T\rho(x)(u)+[Tu,x],\mlabel{eq:deforiso1} \\
  \frkT_1\rho(x)(u)&=&[x,\frkT_2u], \; \forall u\in V.
  \mlabel{eq:deforiso2}
\end{eqnarray}
Finally, Eq.~\eqref{defi:isocon2} gives
$$
({\Id}_V+t\rho(x))\rho(y)(u)=\rho(({\Id}_\g+t\ad_x)(y))({\Id}_V+t\rho(x))(u),\quad \forall y\in\g, u\in V,
$$
which implies that $x$ satisfies
\begin{equation}
  \rho([x,y])\rho(x)=0,\quad\forall y\in\g.\mlabel{eq:Nij2}
\end{equation}

Note that Eq.~\eqref{eq:deforiso1} means that $\frkT_2-\frkT_1=d_\mrho x$ for $\mrho$ defined in Lemma~\ref{lem:rep}. Thus, we have

\begin{thm}\label{thm:iso3} Let $T$ be an \oop on a Lie algebra $\g$
with respect to a representation $(V;\rho)$.
  If two one-parameter infinitesimal deformations $T^1_t=T+t\frkT_1$ and $T^2_t=T+t\frkT_2$ are equivalent, then $\frkT_1$ and $\frkT_2$ are in the same cohomology class of $\huaH^1(V,\g)=\huaZ^1(V,\g)/\huaB^1(V,\g)$ defined in
  Definition~\ref{de:opcoh}.
\end{thm}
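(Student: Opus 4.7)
The plan is straightforward: extract the coefficient of $t$ from the homomorphism condition~\eqref{defi:isocon1}, and recognize the resulting identity as saying that $\frkT_2-\frkT_1$ is the Chevalley--Eilenberg coboundary of a $0$-cochain.

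First, I would verify that $\frkT_1$ and $\frkT_2$ actually define classes in $\huaH^1(V,\g)$. Since $T+t\frkT_i$ is an $\huaO$-operator for every $t$, its generator $\frkT_i$ satisfies Eq.~\eqref{eq:deform1}, which, as noted immediately after that equation, is precisely the $1$-cocycle condition $d_\mrho\frkT_i=0$ for the representation $\mrho$ of $(V,[\cdot,\cdot]_T)$ on $\g$ built in Lemma~\ref{lem:rep}. Hence $\frkT_1,\frkT_2\in\huaZ^1(V,\g)$.

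Next I would invoke Eq.~\eqref{eq:deforiso1}, obtained in the paragraph just before the theorem by expanding~\eqref{defi:isocon1} for the pair $(\Id_\g+t\ad_x,\Id_V+t\rho(x))$ and reading off the coefficient of $t$:
$$(\frkT_2-\frkT_1)(u)=T\rho(x)(u)+[Tu,x],\quad\forall u\in V.$$
The remaining identities~\eqref{eq:Nij1},~\eqref{eq:deforiso2} and~\eqref{eq:Nij2}, arising from higher orders and from~\eqref{defi:isocon2}, are not needed here; they will instead motivate the forthcoming definition of a Nijenhuis element.

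Finally I would compare the right-hand side with $d_\mrho x$ for $x\in\g=\huaC^0(V,\g)$. Specializing the coboundary formula~\eqref{eq:odiff} to $k=0$, only the first two summands survive and yield exactly $[Tu,x]+T\rho(x)(u)$; alternatively one can cite Proposition~\ref{pro:danddT} to write $d_\mrho x=\Courant{T,x}$. Either way, $\frkT_2-\frkT_1=d_\mrho x\in\huaB^1(V,\g)$, so $\frkT_1$ and $\frkT_2$ represent the same class in $\huaH^1(V,\g)$. Since the core computation has already been performed immediately above the theorem, I do not anticipate a real obstacle; the only substantive step is matching $T\rho(x)(u)+[Tu,x]$ with $d_\mrho$ applied to a $0$-cochain, which is a direct unpacking of definitions.
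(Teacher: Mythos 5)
Your proposal is correct and follows exactly the paper's argument: the authors derive Eq.~\eqref{eq:deforiso1} from the coefficient of $t$ in the homomorphism condition~\eqref{defi:isocon1} and then observe that it says $\frkT_2-\frkT_1=d_\mrho x$ with $x\in\g=\huaC^0(V,\g)$, while the cocycle property of each $\frkT_i$ is Eq.~\eqref{eq:deform1}. Nothing is missing.
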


    \begin{defi}
Let $T$ be an \oop on a Lie algebra $\g$ with respect to a representation $(V;\rho)$. An element $x\in\g$ is called a {\bf Nijenhuis element} associated to $T$ if $x$ satisfies Eqs.~\eqref{eq:Nij1}, \eqref{eq:Nij2} and the equation
      \begin{eqnarray}
        ~[x,[Tu,x]+T\rho(x)(u)]=0,\quad \forall u\in V.
         \mlabel{eq:Nij3}
        \end{eqnarray}
   Denote by $\Nij(T)$ the set of Nijenhuis elements associated to an $\huaO$-operator $T$.
    \end{defi}

By Eqs.~\eqref{eq:Nij1}-\eqref{eq:Nij2}, it is obvious that a
trivial one-parameter infinitesimal deformation gives rise to a
Nijenhuis element. The following result is in close analogue to
the fact that the differential of a Nijenhuis operator on a Lie
algebra generates a trivial one-parameter infinitesimal
deformation of the Lie algebra~\cite{Do}, justifying the notion of
Nijenhuis elements.

  \begin{thm}\label{thm:trivial}
   Let $T$ be an $\huaO$-operator on a Lie algebra $\g$ with respect to a representation $(V;\rho)$. Then for any  $x\in \Nij(T)$, $T_t:=T+t \frkT$ with $\frkT:=d_\mrho x$ is a trivial one-parameter infinitesimal  deformation of the $\huaO$-operator $T$.
\end{thm}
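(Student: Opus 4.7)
My plan is to verify, using the three Nijenhuis conditions, that the pair $\phi_\g := \Id_\g + t\ad_x$ and $\phi_V := \Id_V + t\rho(x)$ satisfies the three defining conditions of Definition~\ref{defi:isoO} for a homomorphism, and to conclude in parallel that $T_t = T + t\frkT$ is itself an $\huaO$-operator for every $t\in\K$ so that the notion of ``trivial deformation'' is applicable. Note that Eq.~\eqref{eq:odiff} at $k=0$ unpacks to $\frkT(u) = [Tu,x] + T\rho(x)(u)$, which will match the $t^1$-coefficient in the intertwining identity below.

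The three homomorphism conditions consume the three Nijenhuis relations tightly: (i) $\phi_\g$ is a Lie endomorphism of $\g$ because $\ad_x$ is already a derivation and the remaining requirement $[[x,y],[x,z]] = 0$ is Eq.~\eqref{eq:Nij1}; (ii) expanding $\phi_V\rho(y)u - \rho(\phi_\g y)\phi_V u$ and using $\rho([x,y]) = [\rho(x),\rho(y)]$ cancels the $t^0$- and $t^1$-terms, leaving only the $t^2$-coefficient $\rho([x,y])\rho(x)u$, killed by Eq.~\eqref{eq:Nij2}; (iii) expanding $T\phi_V u - \phi_\g T_t u$ in $t$, the $t^1$-coefficient $T\rho(x)u - \frkT u - [x,Tu]$ vanishes by the very definition of $\frkT$, while the $t^2$-coefficient is $-[x,\frkT u] = -[x,[Tu,x] + T\rho(x)u]$, vanishing by Eq.~\eqref{eq:Nij3}.

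To show $T_t$ is an $\huaO$-operator for all $t$, I will use a transport-of-structure argument inside $\g[t]$. Combining the Lie-homomorphism property of $\phi_\g$, the intertwining $T\phi_V = \phi_\g T_t$, the equivariance, and the $\huaO$-operator relation for $T$, a short chain of substitutions yields
\[
\phi_\g\!\left([T_t u, T_t v] - T_t\big(\rho(T_t u)v - \rho(T_t v)u\big)\right) = 0 \qquad\text{in } \g[t].
\]
The map $\phi_\g = \Id + t\ad_x$ is injective on $\g[t]$: if $\sum_i y_i t^i$ lies in its kernel, the coefficient-wise recursion $y_0 = 0$ and $y_i = -[x,y_{i-1}]$ forces every $y_i = 0$. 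Hence the bracketed polynomial in $t$ vanishes identically, giving the $\huaO$-operator relation for $T_t$ at every $t$, after which $(\phi_\g,\phi_V)$ is genuinely a homomorphism from the $\huaO$-operator $T_t$ to $T$, proving triviality.

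The main obstacle is purely organizational: making sure each homomorphism identity, being a polynomial in $t$ of degree $\leq 2$, truncates in the precise way that the three Nijenhuis relations are designed to handle. An alternative route is to verify Eqs.~\eqref{eq:deform1} and~\eqref{eq:deform2} directly; \eqref{eq:deform1} is immediate from $\frkT = d_\mrho x$ and $d_\mrho^2 = 0$, but \eqref{eq:deform2} unfolds into an unwieldy sum that requires Jacobi together with all three Nijenhuis relations, so the transport argument is preferable in keeping the bookkeeping light.
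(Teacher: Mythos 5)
Your proof is correct, but the route through the hard part is genuinely different from the paper's. The paper first notes that $\frkT=d_\mrho x$ is automatically a $1$-cocycle (your Eq.~\eqref{eq:deform1}) and then verifies Eq.~\eqref{eq:deform2}, i.e.\ $[\frkT u,\frkT v]=\frkT(\rho(\frkT u)(v)-\rho(\frkT v)(u))$, by a long direct expansion in which the three Nijenhuis identities and the Jacobi identity are used to cancel terms in several groups; the final homomorphism verification is left as ``straightforward.'' You invert this order: you first check the three homomorphism identities for $(\Id_\g+t\ad_x,\Id_V+t\rho(x))$ --- each a degree-$\le 2$ polynomial identity in $t$ whose top coefficient is killed by exactly one of Eqs.~\eqref{eq:Nij1}, \eqref{eq:Nij2}, \eqref{eq:Nij3} --- and then obtain the $\huaO$-operator property of $T_t$ by transporting the identity for $T$ back through $\phi_\g$, using that $\Id+t\ad_x$ is injective on $\g[t]$. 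This is not circular (the homomorphism conditions are plain linear-map identities that do not presuppose $T_t$ is an $\huaO$-operator), and the chain $\phi_\g[T_tu,T_tv]=[T\phi_Vu,T\phi_Vv]=T(\rho(T\phi_Vu)\phi_Vv-\rho(T\phi_Vv)\phi_Vu)=\phi_\g T_t(\rho(T_tu)v-\rho(T_tv)u)$ checks out, with all identities extended $\K[t]$-bilinearly. Your approach buys a much lighter computation and makes transparent how each Nijenhuis condition is consumed; its one delicate point, which you handle correctly, is that injectivity must be argued formally over $\K[t]$ (for a fixed scalar $t$ the map $\Id_\g+t\ad_x$ need not be injective on $\g$), after which the vanishing of the polynomial coefficient-by-coefficient gives the identity for every specialization of $t$. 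The paper's brute-force verification, by contrast, is self-contained at the level of elements and does not require introducing $\g[t]$, but is considerably longer and leaves the homomorphism check to the reader.
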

\begin{proof}
First $\frkT$ is closed since $\frkT=d_\mrho x$.
To show that $\frkT=d_\mrho x$ generates a trivial one-parameter infinitesimal deformation of
the $\huaO$-operator $T$, we only need to verify that
Eq.~\eqref{eq:deform2} holds. By Eq.~\eqref{eq:Nij1}, we have,
for any $u,v\in V$,
\begin{eqnarray*}
      &&[\frkT u,\frkT v]-\frkT(\rho(\frkT u)(v)-\rho(\frkT v)(u))\\
      &=& [[Tu,x],[Tv,x]]+[[Tu,x],T\rho(x)(v)]+[T\rho(x)(u),[Tv,x]]+[T\rho(x)(u),T\rho(x)(v)]\\
      &&-[T\rho([Tu,x])(v),x]-[T\rho(T\rho(x)(u))(v),x]+[T\rho([Tv,x])(u),x]+[T\rho(T\rho(x)(v))(u),x]\\
      &&-T\rho(x)\rho([Tu,x])(v)-T\rho(x)\rho(T\rho(x)(u))(v)+T\rho(x)\rho([Tv,x])(u)+T\rho(x)\rho(T\rho(x)(v))(u)\\
      &=& [[Tu,x],T\rho(x)(v)]+[T\rho(x)(u),[Tv,x]]+\underline{T\rho(T\rho(x)(u))\rho(x)(v)}\underbrace{-T\rho(T\rho(x)(v))\rho(x)(u)}\\
      &&-[T\rho([Tu,x])(v),x]-[T\rho(T\rho(x)(u))(v),x]+[T\rho([Tv,x])(u),x]+[T\rho(T\rho(x)(v))(u),x]\\
      &&\underline{-T\rho(x)\rho([Tu,x])(v)-T\rho(x)\rho(T\rho(x)(u))(v)}\underbrace{+T\rho(x)\rho([Tv,x])(u)+T\rho(x)\rho(T\rho(x)(v))(u)}.
    \end{eqnarray*}
   By Eqs.~\eqref{eq:Nij2} and \eqref{eq:Nij3}, the under-braced terms add to zero.
Similarly, the underlined terms add to zero.
For the other terms, by Eqs.~\eqref{eq:Nij1} and \eqref{eq:Nij3}, we have
\begin{eqnarray*}
  &&[[Tu,x],T\rho(x)(v)]-[T\rho([Tu,x])(v),x]+[T\rho(T\rho(x)(v))(u),x]\\
  &=&[Tu,[x,T\rho(x)(v)]]+[[Tu,T\rho(x)(v)],x]-[T\rho([Tu,x])(v),x]+[T\rho(T\rho(x)(v))(u),x]\\
  &=&-[Tu,[x,[Tv,x]]]+[T\rho(Tu)\rho(x)(v),x]-[T\rho([Tu,x])(v),x]\\
  &=&-[x,[Tu,[Tv,x]]]+[T\rho(x)\rho(Tu)(v),x]\\
  &=&-[x,[Tu,[Tv,x]]]+[x,[T\rho(Tu)(v),x]].
\end{eqnarray*}
Similarly, we have
\begin{eqnarray*}
&&[T\rho(x)(u),[Tv,x]]-[T\rho(T\rho(x)(u))(v),x]+[T\rho([Tv,x])(u),x]\\
 &=&[x,[Tv,[Tu,x]]]-[x,[T\rho(Tv)(u),x]].
\end{eqnarray*}
Therefore,
 \begin{eqnarray*}
      &&[\frkT u,\frkT v]-\frkT(\rho(\frkT u)(v)-\rho(\frkT v)(u))\\
&=&-[x,[Tu,[Tv,x]]]+[x,[Tv,[Tu,x]]]+[x,[T\rho(Tu)(v),x]]-[x,[T\rho(Tv)(u),x]]\\
&=&-[x,[[Tu,Tv],x]]+[x,[[Tu,Tv],x]]=0,
\end{eqnarray*}
which means that $\frkT:=d_\mrho x$ generates  a   one-parameter infinitesimal  deformation of $T$.

Further, since $x$ is a Nijenhuis element, it is straightforward
to deduce that $({\Id}_\g+t\ad_x,{\Id}_V+t\rho(x))$ gives the
desired homomorphism between $T_t$ and $T$. Thus, the deformation is trivial.
\end{proof}

Now we recall the notion of a Nijenhuis operator on a pre-Lie algebra   given in \cite{WBLS}, which gives rise to a trivial one-parameter infinitesimal deformation of a pre-Lie algebra.

\begin{defi}
  A linear map $N:V\longrightarrow V$ on a pre-Lie algebra $(V,\cdot_V) $ is called a {\bf Nijenhuis operator} if
  \begin{equation}
    (Nu)\cdot_V (Nv)=N((Nu)\cdot_V v+u\cdot_V (Nv)-N(u\cdot_V v)),\quad\forall u,v\in V.
  \end{equation}
\end{defi}

For its connection with a Nijenhuis element associated to
an \oop, we   have

\begin{pro}
Let $x\in\g$ be a Nijenhuis element associated to an
$\huaO$-operator $T$ on a Lie algebra $\g$ with respect to a
representation $(V;\rho)$. Then $\rho(x)$ is a Nijenhuis operator
on the associated pre-Lie algebra $(V,\cdot_T)$.
    \end{pro}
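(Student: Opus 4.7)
The plan is to verify directly that $N := \rho(x)$ satisfies the Nijenhuis operator identity
\[
(\rho(x)u) \cdot_T (\rho(x)v) = \rho(x)\bigl((\rho(x)u) \cdot_T v + u \cdot_T (\rho(x)v) - \rho(x)(u \cdot_T v)\bigr)
\]
for all $u, v \in V$, on the pre-Lie algebra $(V, \cdot_T)$ from Theorem~\ref{thm:opL}. First I would substitute $w \cdot_T v = \rho(Tw)(v)$ throughout, so that the identity, viewed as an equation of operators on $V$ applied to $v$, becomes
\[
\rho(T\rho(x)u)\rho(x) - \rho(x)\rho(T\rho(x)u) - \rho(x)\rho(Tu)\rho(x) + \rho(x)^2\rho(Tu) \;\overset{?}{=}\; 0.
\]

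Next, grouping the four terms into two commutators and using that $\rho$ is a Lie algebra homomorphism, the left-hand side rewrites as $\rho([T\rho(x)u, x]) - \rho(x)\rho([Tu, x])$. I would then unfold the first commutator using the Nijenhuis element axioms: by Eq.~\eqref{eq:Nij3}, $[x, T\rho(x)u] = -[x,[Tu,x]]$, and Jacobi in $\g$ gives $[x,[Tu,x]] = [[x,Tu],x]$, so $[T\rho(x)u,x] = -[[x,Tu],x]$. Applying $\rho$ and expanding the bracket yields $\rho([T\rho(x)u,x]) = -\rho([x,Tu])\rho(x) + \rho(x)\rho([x,Tu])$; the first summand vanishes by Eq.~\eqref{eq:Nij2}. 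Since $-\rho(x)\rho([Tu,x]) = \rho(x)\rho([x,Tu])$, the two remaining terms cancel and the identity holds.

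The argument is really just bookkeeping: the main obstacle is choosing the right grouping so that one application of Jacobi in $\g$ together with Eq.~\eqref{eq:Nij3} delivers a commutator of the form $[\,\cdot\,,x]$, on which Eq.~\eqref{eq:Nij2} can then act after composing with $\rho(x)$. It is worth noting that only Eqs.~\eqref{eq:Nij2} and \eqref{eq:Nij3} are used in this proposition; the condition \eqref{eq:Nij1} plays its role elsewhere, ensuring that ${\rm Id}_\g + t\,\ad_x$ is a Lie algebra endomorphism in the equivalence/triviality discussion preceding Theorem~\ref{thm:trivial}.
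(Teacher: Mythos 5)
Your strategy is exactly the paper's: reduce the Nijenhuis identity for $\rho(x)$ to an operator identity on $V$, recognize the four terms as $\rho([T\rho(x)u,x])-\rho(x)\rho([Tu,x])$, and kill this using Eqs.~\eqref{eq:Nij2} and \eqref{eq:Nij3} (the paper writes the same quantity as $\rho([x,T\rho(x)(u)+[Tu,x]])$, using \eqref{eq:Nij2} to complete $\rho(x)\rho([Tu,x])$ to a commutator and then \eqref{eq:Nij3} once at the end; your observation that \eqref{eq:Nij1} is not needed here is also correct).

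There is, however, a sign slip in your middle step. From \eqref{eq:Nij3} you correctly get $[x,T\rho(x)u]=-[x,[Tu,x]]=-[[x,Tu],x]$, but then antisymmetry gives
\[
[T\rho(x)u,x]=-[x,T\rho(x)u]=+[[x,Tu],x],
\]
not $-[[x,Tu],x]$ as you wrote. Consequently $\rho([T\rho(x)u,x])=\rho([x,Tu])\rho(x)-\rho(x)\rho([x,Tu])$; the first summand dies by \eqref{eq:Nij2} and the survivor is $-\rho(x)\rho([x,Tu])=+\rho(x)\rho([Tu,x])$, which cancels against the $-\rho(x)\rho([Tu,x])$ term exactly as needed. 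With the sign as you wrote it, the two remaining terms would \emph{add} to $-2\rho(x)\rho([Tu,x])$ rather than cancel, so the chain of equalities as literally stated does not close; correcting the single sign repairs the argument and it then coincides with the paper's computation.
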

\noindent \emph{Proof.} For the proof, we just need to check, by
Eq.~\eqref{eq:Nij2}, for all $u,v\in V$,
    \begin{eqnarray*}
     && \rho(x)(\rho(x)(u)\cdot_T v+u\cdot_T\rho(x)(v)-\rho(x)(u\cdot_Tv))-\rho(x)(u)\cdot_T\rho(x)(v)\\
     &=&\rho(x)\Big(\rho(T\rho(x)(u))(v)+\rho(Tu)\rho(x)(v)-\rho(x)\rho(Tu)(v)\Big)-\rho(T\rho(x)(u))\rho(x)(v)\\
     &=&[\rho(x),\rho(T\rho(x)(u))]+[\rho(x),\rho([Tu,x])](v)\\
     &=&\rho([x,T\rho(x)(u)+[Tu,x]])(v)
     =0. \hspace{7cm} \qed
    \end{eqnarray*}

\section{Formal deformations of an $\huaO$-operator}
\mlabel{sec:fordef}

In this section, first we study one-parameter formal deformations
of an $\huaO$-operator and consider the rigidity of the
$\huaO$-operator. Then we study order $n$ deformations of an
$\huaO$-operator.   We show that the obstruction of an order $n$ deformation being extendable is given by a class in the second
cohomology group.

Let $\K[[t]]$ be the ring of power series in one variable
$t$, taking as the inverse limit of the system $(\bfk[t]/(t^n),
\pi_{n+1,n})$, where $\pi_{n+1,n}:\bfk[t]/(t^{n+1}) \to
\bfk[t]/(t^n)$ is the natural quotient map. For any $\K$-linear space $V$, we let
$V[[t]]$ denote the inverse limit of the system $(V\ot
(\bfk[t]/(t^n)),{\rm Id}_V\ot \pi_{n+1,n})$, formally regarded as
power series in $t$ with coefficients in $V$.

If in addition, $\g$ is a Lie algebra over $\K$, then
$\g[[t]]$ is a Lie algebra over $\K$ by
\begin{equation}
\bigg[\sum_{i\geq 0} a_it^i,\sum_{j\geq 0}b_it^j\bigg]:=\sum_{k\geq
0}\sum_{i+j=k}[a_i,b_j]t^k,\quad\forall  a_i,b_j\in \g.
\end{equation}
For any representation $(V;\rho)$ of $\g$, there is a natural
action of $\g[[t]]$ on $V[[t]]$ induced by $\rho$ in an obvious
way  which is still denoted by
$\rho$. In fact, since $\rho:\g\rightarrow {\rm
Hom_{\K}}(V,V)$ is $\K$-linear, it can be extended
to be a $\K[[t]]$-module map from $\g[[t]]$ to ${\rm
Hom}_{\K[[t]]}(V[[t]],V[[t]])$.

Let $T$   be an $\huaO$-operator  on a Lie algebra $\g$ with respect to a representation $(V;\rho)$.
Consider a $t$-parameterized family of linear operations
\begin{eqnarray}
T_t=\sum_{i\geq0}\tau_i t^i,\quad \tau_i\in \Hom_{\K}(V,\g),
\label{eq:tdeform}
\end{eqnarray}
that is, $T_t\in{\rm Hom}_{\K}(V,\g)[[t]]\subset {\rm
Hom}_{\K}(V,\g[[t]])$.  Extend it to be a $\K[[t]]$-module map from $V[[t]]$ to $\g[[t]]$ which is still denoted by $T_t$.

\begin{defi}\label{defi:dO}
 If  $T_t=\sum_{i\geq0}\tau_i t^i$ with $\tau_0=T$ satisfies
\begin{eqnarray}
[T_t(u),T_t(v)]=T_t\Big(\rho(T_t(u))(v)-\rho(T_t(v))(u)\Big),\;\;\forall
u,v\in V, \mlabel{O-operator}
\end{eqnarray}
we say that $T_t$ is a {\bf  one-parameter formal deformation} of
the $\huaO$-operator $T.$
\end{defi}

\begin{rmk} The left hand side of Eq.~(\ref{O-operator}) holds in
the Lie algebra $\g[[t]]$, whereas the right hand side makes sense
since $T_t$ is a $\K[[t]]$-module map.
\end{rmk}

Recall~\cite{Bu0} that a one-parameter formal deformation of a
pre-Lie algebra $(A,\cdot)$ is a power series
$f_t=\sum_{i=0}^\infty \alpha_i t^i$ such that $\alpha_0(a,b)=a\cdot b$
 for any $a,b\in A$  and $f_t$ defines a pre-Lie algebra
structure on $A[[t]]$.

Building on the relationship between \oops and pre-Lie algebras, we have
\begin{pro}
 If $T_t=\sum_{i\geq0}\tau_i t^i$ is a one-parameter formal deformation of an $\huaO$-operator $T$ on a Lie algebra $\g$ with respect to a representation $(V;\rho)$, then $\cdot_{T_t}$ defined by
   $$
   u\cdot_{T_t}v=\sum_{i\geq 0}\rho(\tau_iu)(v) t^i,\quad\forall u,v\in V,
   $$
is a one-parameter formal deformation of the associated pre-Lie algebra $(V,\cdot_T)$.
\end{pro}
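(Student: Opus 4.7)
The plan is to observe that this is essentially an application of Theorem~\ref{thm:opL} in the formal power series setting, so the proof reduces to verifying that the cited theorem works over the coefficient ring $\K[[t]]$ rather than just over $\K$.

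First I would rewrite the formula defining $\cdot_{T_t}$ in the compact form $u\cdot_{T_t}v=\rho(T_t(u))(v)$, where $\rho$ is understood as its $\K[[t]]$-linear extension to a map $\g[[t]]\to \Hom_{\K[[t]]}(V[[t]],V[[t]])$, exactly as set up in the paragraph preceding Definition~\ref{defi:dO}. Under this identification, $u\cdot_{T_t}v$ is a well-defined $\K[[t]]$-bilinear operation on $V[[t]]$. Then I would check the zeroth-order term: since $\tau_0=T$, the coefficient of $t^0$ is $\rho(Tu)(v)=u\cdot_T v$, so $\cdot_{T_t}$ indeed reduces to $\cdot_T$ at $t=0$, satisfying the first requirement of a formal deformation.

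Next I would verify the pre-Lie axiom for $\cdot_{T_t}$. By hypothesis, $T_t$ satisfies Eq.~\eqref{O-operator}, which is exactly the $\huaO$-operator identity for the $\K[[t]]$-linear map $T_t:V[[t]]\to \g[[t]]$ with respect to the $\K[[t]]$-linear representation $\rho$. Now Theorem~\ref{thm:opL} states that any $\huaO$-operator gives rise to a pre-Lie structure via $u\cdot v:=\rho(Tu)(v)$; crucially, the proof of this theorem is a direct algebraic manipulation using only the Lie bracket, the representation axiom, and the $\huaO$-operator identity, so it goes through verbatim over the ground ring $\K[[t]]$. Applying it to $T_t$ immediately yields that $\cdot_{T_t}$ satisfies the pre-Lie associator identity on $V[[t]]$.

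The only potential obstacle is the base-change subtlety: one must make sure that the Lie algebra $\g[[t]]$ and the representation $(V[[t]];\rho)$ form a genuine Lie algebra with representation over $\K[[t]]$, so that Theorem~\ref{thm:opL} is applicable. But this is handled exactly by the setup given before Definition~\ref{defi:dO}, where the bracket on $\g[[t]]$ and the extension of $\rho$ are explicitly constructed. Alternatively, if one prefers a purely coefficient-wise argument, one can expand both sides of the pre-Lie associator identity for $\cdot_{T_t}$ as power series in $t$ and collect the coefficient of $t^n$; this gives for each $n$ an identity that follows by summing, over all $i+j+k=n$, the $\huaO$-operator relations coming from Eq.~\eqref{O-operator} at orders $\leq n$, precisely mirroring the single computation in the proof of Theorem~\ref{thm:opL}. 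Either route completes the proof.
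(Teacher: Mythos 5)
Your proof is correct and follows the route the paper intends: the paper states this proposition without proof, treating it as an immediate consequence of Theorem~\ref{thm:opL} applied to the $\K[[t]]$-linear $\huaO$-operator $T_t$ on $\g[[t]]$ with respect to the extended representation on $V[[t]]$, which is precisely the argument you spell out. Your verification of the zeroth-order term and your remark that the purely algebraic proof of Theorem~\ref{thm:opL} carries over the base ring $\K[[t]]$ (or equivalently can be checked coefficient-wise) supply exactly the details the paper leaves implicit.
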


Applying Eq.~\eqref{eq:tdeform} to expand Eq.~\eqref{O-operator} and collecting coefficients of $t^n$, we see that Eq.~\eqref{O-operator} is equivalent to the system of equations
\begin{eqnarray}
\sum\limits_{i+j=k\atop
i,j\geq0}\Big([\tau_i(u),\tau_j(v)]-\tau_i(\rho(\tau_j(u))(v)-\rho(\tau_j(v))(u))\Big)=0,
\;\;\forall k\geq 0, u,v\in V. \mlabel{deformation-equation}
\end{eqnarray}

\begin{pro}
Let $T_t=\sum_{i\geq0}\tau_i t^i$ be a one-parameter formal
deformation of an $\huaO$-operator $T$ on a Lie algebra $\g$ with
respect to a representation $(V;\rho)$. Then $\tau_1$ is a
$1$-cocycle on the Lie algebra $V^c=(V,[\;,\;]_T)$ with coefficients in $\g$, that is, $d_\mrho \tau_1=0$. 
\mlabel{pro:cocycle}
\end{pro}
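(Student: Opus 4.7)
The plan is to extract the $t$-linear part of the expanded deformation equation and recognize it as the cocycle condition $d_\mrho \tau_1 = 0$. First, I would expand Eq.~\eqref{O-operator} as a formal power series in $t$ and collect coefficients, which reproduces the system Eq.~\eqref{deformation-equation}. Setting $k=1$ and substituting $\tau_0 = T$ gives, for all $u, v \in V$,
\begin{equation*}
[Tu, \tau_1(v)] + [\tau_1(u), Tv] = T\bigl(\rho(\tau_1(u))(v) - \rho(\tau_1(v))(u)\bigr) + \tau_1\bigl(\rho(Tu)(v) - \rho(Tv)(u)\bigr).
\end{equation*}

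Next, I would specialize the coboundary formula in Eq.~\eqref{eq:odiff} to $k=1$ and $f = \tau_1$, obtaining
\begin{equation*}
d_\mrho \tau_1(u, v) = [Tu, \tau_1(v)] - [Tv, \tau_1(u)] + T\rho(\tau_1(v))(u) - T\rho(\tau_1(u))(v) - \tau_1\bigl(\rho(Tu)(v) - \rho(Tv)(u)\bigr).
\end{equation*}
Using $[\tau_1(u), Tv] = -[Tv, \tau_1(u)]$ and redistributing signs, the two displays are seen to express the same identity, so the vanishing of the $t^1$-coefficient of the deformation equation is exactly $d_\mrho \tau_1 = 0$.

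Conceptually, this is nothing but the Maurer-Cartan picture of Section~\ref{ss:mce}: by Theorem~\ref{thm:deformation}, the requirement that $T + \tau_1 t + \tau_2 t^2 + \cdots$ remain an $\huaO$-operator forces each of its truncations to satisfy a Maurer-Cartan condition in $(\huaC^*(V,\g), \Courant{\cdot,\cdot}, d_T)$, whose $t$-linear part is precisely $d_T \tau_1 = \Courant{T, \tau_1} = 0$; Proposition~\ref{pro:danddT} then translates this into $d_\mrho \tau_1 = (-1)^1 d_T \tau_1 = 0$. There is no genuine obstacle here beyond careful sign bookkeeping, making the proof essentially a one-line matching of formulas.
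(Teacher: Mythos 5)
Your proof is correct and follows essentially the same route as the paper: extract the $t^1$-coefficient of the expanded deformation equation (the $k=1$ case of Eq.~\eqref{deformation-equation}) and identify it, after the sign flip $[\tau_1(u),Tv]=-[Tv,\tau_1(u)]$, with the $1$-cocycle condition from Eq.~\eqref{eq:odiff}. The closing Maurer--Cartan remark is a valid (and consistent) reformulation via Theorem~\ref{thm:deformation} and Proposition~\ref{pro:danddT}, but it is not needed.
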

\begin{proof} 
For $k=1$, Eq.~\eqref{deformation-equation} is equivalent to
  \vspace{-.1cm}
$$ [Tu,\tau_1(v)]-[Tv,\tau_1(u)]-T(\rho(\tau_1(u))(v)-\rho(\tau_1(v))(u))-\tau_1(\rho(Tu)(v)-\rho(Tv)(u))=0,\;\;\forall u,v\in V.
  \vspace{-.1cm}
$$
Thus, $\tau_1$ is a $1$-cocycle.
\end{proof}

\begin{defi}
Let $T$ be an \oop on a Lie algebra $\g$ with respect to a
representation $(V;\rho)$. The $1$-cocycle $\tau_1$ in
Proposition~\ref{pro:cocycle} is called the {\bf infinitesimal} of
the one-parameter formal deformation $T_t=\sum_{i\geq0}\tau_i t^i$ of $T$.
  \vspace{-.1cm}
\end{defi}

\begin{defi}
 Two formal deformations $\overline{T}_t =\sum_{i\geq0}\bar{\tau}_i t^i$ and $T_t=\sum_{i\geq0}\tau_i
 t^i$ of an $\mathcal O$-operator $T={\bar {\tau}}_0=\tau_0$
on a Lie algebra $\g$ with respect to a representation $(V;\rho)$
are said to be {\bf equivalent} if there exist  $x\in\g$,
$\phi_i\in\gl(\g)$ and $\varphi_i\in\gl(V)$, $i\geq 2$, such that
for
\begin{equation}
\phi_t:={\Id}_\g+t\ad_x+\sum_{i\ge2}\phi_it^i,\;\;\varphi_t :={\Id}_V+t\rho(x)+\sum_{i\ge2}\varphi_it^i,
\label{eq:phi5}
\vspace{-.4cm}
\end{equation}
the following conditions hold:
\begin{enumerate}
\item[\rm(i)] $[\phi_t(x),\phi_t(y)]=\phi_t[x,y]$ for all
$x,y\in \g;$
\item[\rm(ii)]   $T_t\circ
\varphi_t=\phi_t\circ \overline{T}_t$ as $\K[[t]]$-module maps;
\item[\rm(iii)]
$\varphi_t\rho(x)u=\rho(\phi_t(x))\varphi_t(u)$ for all $x\in\g,
u\in V$.
\end{enumerate}
In particular, a formal deformation $T_t$ of an $\huaO$-operator
$T$ is said to be {\bf trivial} if there exists an $x\in\g$,
$\phi_i\in\gl(\g)$ and $\varphi_i\in\gl(V)$, $i\geq 2$, such that
$(\phi_t,\varphi_t)$ defined by Eq.~(\ref{eq:phi5}) gives an
equivalence  between $T_t$ and $T$, with the latter regarded as a
deformation of itself.
\end{defi}

\begin{thm}
If two one-parameter formal deformations of an $\huaO$-operator
on a Lie algebra $\g$ with respect to a representation
$(V;\rho)$ are equivalent, then their infinitesimals are in the
same cohomology class.
\end{thm}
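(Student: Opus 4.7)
The plan is to extract the desired cohomological relation by expanding condition (ii) of the equivalence, namely $T_t\circ\varphi_t=\phi_t\circ\overline{T}_t$, modulo $t^2$. Conditions (i) and (iii), respectively that $\phi_t$ is a Lie algebra endomorphism of $\g$ and that $\varphi_t$ intertwines $\rho$ with $\Ad_{\phi_t}$, are automatic at order $t^0$ and reduce at order $t^1$ to the Jacobi identity for $\g$ and to $\rho$ being a representation, so they give no new constraint on $\tau_1-\bar{\tau}_1$. Hence the whole information needed to compare the infinitesimals is encoded in the leading nontrivial coefficient of (ii).

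First I would write
\[
T_t\circ\varphi_t=\bigl(T+t\tau_1+O(t^2)\bigr)\circ\bigl(\Id_V+t\rho(x)+O(t^2)\bigr)=T+t\bigl(T\circ\rho(x)+\tau_1\bigr)+O(t^2),
\]
\[
\phi_t\circ\overline{T}_t=\bigl(\Id_\g+t\ad_x+O(t^2)\bigr)\circ\bigl(T+t\bar{\tau}_1+O(t^2)\bigr)=T+t\bigl(\ad_x\circ T+\bar{\tau}_1\bigr)+O(t^2),
\]
where the zeroth-order terms match because $\tau_0=\bar{\tau}_0=T$. Comparing coefficients of $t^1$ and evaluating at $u\in V$ gives
\[
T\rho(x)(u)+\tau_1(u)=[x,Tu]+\bar{\tau}_1(u),
\]
so that
\[
\bar{\tau}_1(u)-\tau_1(u)=T\rho(x)(u)-[x,Tu]=T\rho(x)(u)+[Tu,x].
\]

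Next I would recognize the right-hand side as exactly $d_\mrho x$ applied to $u$, using the formula \eqref{eq:odiff} in degree $k=0$ (where a $0$-cochain is an element of $\g$) together with the representation $\mrho$ of $V^c$ on $\g$ constructed in Lemma~\ref{lem:rep}. Since $x\in\g=\huaC^0(V,\g)$, this places $\bar{\tau}_1-\tau_1$ in $\huaB^1(V,\g)$, so $\tau_1$ and $\bar{\tau}_1$ represent the same class in $\huaH^1(V,\g)$, as required. There is no genuine obstacle here: the argument is a first-order expansion of a formal identity and a direct pattern-match with the coboundary formula; the only thing to keep straight is the sign convention, which determines whether the answer is $\pm d_\mrho x$—either way a coboundary, so the cohomology classes coincide.
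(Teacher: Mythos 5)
Your proof is correct and follows essentially the same route as the paper: expand the intertwining condition $T_t\circ\varphi_t=\phi_t\circ\overline{T}_t$ to first order in $t$ to get $\bar{\tau}_1-\tau_1=T\rho(x)(\cdot)+[T(\cdot),x]=d_\mrho x$, hence a coboundary. The sign issue you flag resolves as stated (the difference is exactly $+d_\mrho x$ with the paper's conventions), and your observation that conditions (i) and (iii) contribute nothing at first order is accurate.
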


\begin{proof} Let $(\phi_t,\varphi_t)$ be the two maps defined by
Eq.~(\ref{eq:phi5}) which gives an equivalence between two
deformations $\overline{T}_t=\sum_{i\geq0}\overline{\tau}_i t^i$ and
$T_t=\sum_{i\geq0}\tau_i t^i$ of an \oop $T$. By $\phi_t \circ\overline{T}_t=
T_t\circ\varphi_t$, we have
\begin{eqnarray*}
\bar{\tau}_1(v)=\tau_1(v)+T\rho(x)(v)-[x,Tv]=\tau_1(v)+(d_\mrho
x)(v),\quad \forall v\in V,
\end{eqnarray*}
which implies that $\bar{\tau}_1$ and $\tau_1$ are in the same
cohomology class.   \end{proof}

\begin{defi}
An $\huaO$-operator $T$ is {\bf rigid} if all one-parameter formal deformations of $T$ are trivial.
\end{defi}

As a cohomological condition of the rigidity, we have the following result which suggests that the rigidity of an \oop is a very strong condition.

\begin{pro} Let $T:V\rightarrow \g$ be an \oop on a Lie algebra $\g$ with respect to a representation $(V;\rho)$. If $\huaZ^1(V,\g)=d_\mrho(\Nij(T))$, then  $T$ is rigid.
\end{pro}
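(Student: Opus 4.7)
The strategy is to show that an arbitrary formal deformation $T_t=T+\sum_{i\ge1}\tau_it^i$ of $T$ is equivalent to $T$ by constructing the required pair $(\Phi_t,\Psi_t)$ as the $t$-adic limit of a sequence of one-step equivalences $(\phi_t^{(n)},\varphi_t^{(n)})$, each of which annihilates the lowest surviving coefficient. A preliminary observation is that $\Nij(T)$ is stable under $y\mapsto -y$, since each of Eqs.~(\ref{eq:Nij1})--(\ref{eq:Nij3}) is quadratic in the element; consequently the hypothesis produces, for any $\tau\in\huaZ^1(V,\g)$, a Nijenhuis element $y$ with $d_\mrho y=-\tau$ as well as one with $d_\mrho y=\tau$.

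For the base step, Proposition~\ref{pro:cocycle} gives $\tau_1\in\huaZ^1(V,\g)$, so the hypothesis supplies $y_1\in\Nij(T)$ with $d_\mrho y_1=-\tau_1$. I would set $(\phi_t^{(1)},\varphi_t^{(1)}):=(\Id_\g+t\ad_{y_1},\Id_V+t\rho(y_1))$, a degree-one polynomial pair with no higher-order corrections, and verify the three equivalence conditions as follows: condition~(i) matches at orders $t^0$ and $t^1$ by Jacobi, while its $t^2$ discrepancy $[[y_1,a],[y_1,b]]$ vanishes by Eq.~(\ref{eq:Nij1}); condition~(iii) matches at $t^0$ and $t^1$ because $\rho$ is a Lie-algebra homomorphism, and its $t^2$ discrepancy $\rho([y_1,x])\rho(y_1)$ vanishes by Eq.~(\ref{eq:Nij2}); condition~(ii) then defines $T_t^{(1)}:=(\phi_t^{(1)})^{-1}T_t\varphi_t^{(1)}$, whose $t$-coefficient computes directly to $\tau_1+d_\mrho y_1=0$. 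For the inductive step, assuming $T_t^{(n)}=T+\tau_{n+1}^{(n)}t^{n+1}+O(t^{n+2})$ is equivalent to $T_t$, the order-$(n+1)$ component of Eq.~(\ref{deformation-equation}) collapses to $d_\mrho\tau_{n+1}^{(n)}=0$, and the same recipe using $y_{n+1}\in\Nij(T)$ with $d_\mrho y_{n+1}=-\tau_{n+1}^{(n)}$ and the pair $(\Id_\g+t^{n+1}\ad_{y_{n+1}},\Id_V+t^{n+1}\rho(y_{n+1}))$ (whose discrepancies in~(i) and~(iii) now appear at order $t^{2(n+1)}$ and are again killed by Eqs.~(\ref{eq:Nij1}),~(\ref{eq:Nij2})) produces $T_t^{(n+1)}=T+O(t^{n+2})$.

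Finally, because each $\phi_t^{(n)}\equiv\Id_\g\pmod{t^n}$, the composition $\Phi_t:=\lim_{N\to\infty}\phi_t^{(1)}\circ\phi_t^{(2)}\circ\cdots\circ\phi_t^{(N)}$ converges in $\gl(\g)[[t]]$ and has the shape $\Id_\g+t\ad_{y_1}+\sum_{i\ge2}\Phi_it^i$ demanded by the paper's definition of equivalence; the same holds for $\Psi_t$, and $(\Phi_t,\Psi_t)$ is the desired equivalence between $T_t$ and $T$, establishing rigidity. The chief obstacle I foresee is the careful verification that the truncated polynomials $\Id_\g+t^k\ad_{y_k}$ and $\Id_V+t^k\rho(y_k)$ really satisfy conditions~(i) and~(iii) as formal series in $t$ rather than only to leading order; every such check reduces to the quadratic identities in Eqs.~(\ref{eq:Nij1}) and~(\ref{eq:Nij2}) at the single nontrivial order $t^{2k}$. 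Note that Eq.~(\ref{eq:Nij3}) is not invoked directly by the equivalence check; it enters only through the hypothesis, via membership in $\Nij(T)$.
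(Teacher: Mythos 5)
Your proof is correct and follows essentially the same route as the paper: use the hypothesis to produce a Nijenhuis element whose coboundary cancels the lowest surviving coefficient, conjugate by the corresponding pair $(\Id_\g+t^k\ad_{y_k},\Id_V+t^k\rho(y_k))$, and iterate, which is exactly what the paper compresses into the phrase ``repeating this procedure.'' Your added checks --- that $\Nij(T)$ is closed under negation, that conditions (i) and (iii) reduce to Eqs.~\eqref{eq:Nij1}--\eqref{eq:Nij2} at the single order $t^{2k}$, and that the infinite composition converges $t$-adically to a pair of the required shape --- are details the paper leaves implicit.
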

\begin{proof} Let
$T_t=\sum_{i\geq0}\tau_i t^i$ be a one-parameter formal
deformation of the $\huaO$-operator $T$. Then Proposition
\ref{pro:cocycle} gives $\tau_1\in \huaZ^1(V,\g)$. By the
assumption,  $\tau_1=-d_\mrho x$ for some $x\in\Nij(T)$. Then
setting $\phi_t={\Id_\g}+t\ad_x$ and $\varphi_t={\Id}_V+t\rho(x)$,
we get a formal deformation $\overline{T}_t:=\phi_t^{-1}\circ
T_t\circ\varphi_t.$ Thus, $\overline{T}_t$ is equivalent to $T_t$.
Moreover, we have
\begin{eqnarray*}
\overline{T}_t(v)&=&({\Id}-\ad_xt+\ad^2_xt^2+\cdots+(-1)^i\ad^i_xt^{i}+\cdots)(T_t(v+\rho(x)(v)t))\\
            &=&T(v)+(\tau_1(v)+T\rho(x)(v)-[x,Tv])t+\bar{\tau}_2(v)t^2+\cdots\\
            &=&T(v)+\bar{\tau}_2(v)t^2+\cdots.
\end{eqnarray*}
Repeating this procedure,  we find that $T_t$ is equivalent to $T$. \end{proof}

We next study the obstruction of a deformation of order $n$ from being extendable.

\begin{defi}Let $T:V\longrightarrow\g$ be an $\huaO$-operator on a Lie algebra $\g$ with respect to a representation $(V;\rho)$.
If $T_t=\sum_{i=0}^n\tau_i t^i$ with $\tau_0=T$,
$\tau_i:V\rightarrow \g$, $i=2,\cdots, n$, defines a
$\K[[t]]/(t^{n+1})$-module map from $V[[t]]/(t^{n+1})$ to the Lie
algebra $\g[[t]]/(t^{n+1})$  satisfying
\begin{eqnarray}
[T_t(u),T_t(v)]=T_t\Big(\rho(T_t(u))(v)-\rho(T_t(v))(u)\Big),\;\;\forall
u,v\in V, \mlabel{O-operator of order n}
\end{eqnarray} we say that $T_t$
is an {\bf  order $n$ deformation} of the $\huaO$-operator $T$.
\end{defi}

\begin{rmk} Obviously, the left hand side of Eq.~(\ref{O-operator of order
n}) holds in the Lie algebra $\g[[t]]/(t^{n+1})$ and the  right  hand
side makes sense since $T_t$ is a $\K[[t]]/(t^{n+1})$-module
map.
\end{rmk}

\begin{defi}
Let $T_t=\sum_{i=0}^n\tau_i t^i$ be an order $n$ deformation  of
an $\huaO$-operator $T$ on a Lie
algebra $\g$ with respect to a representation $(V;\rho)$.
 If there
exists a $1$-cochain $\tau_{n+1}\in \huaC^1(V,\g)$ such that
$\widetilde{T}_t=T_t+\tau_{n+1}t^{n+1}$ is an order $n+1$
deformation of the $\huaO$-operator $T$, then we say that $T_{t}$
is {\bf extendable}.
\end{defi}

\begin{pro}
Let $T_t=\sum_{i=0}^n\tau_i t^i$ be an order $n$
deformation  of an $\huaO$-operator $T$ on a Lie algebra $\g$ with
respect to a representation $(V;\rho)$. Define $\Ob \in
\huaC^2(V,\g)$ by
\begin{eqnarray}
\Ob(u,v):=\sum\limits_{i+j=n+1\atop
i,j\geq1}\Big([\tau_i(u),\tau_j(v)]-\tau_i(\rho(\tau_j(u))(v)-\rho(\tau_j(v))(u))\Big),\;\;
\forall u,v\in V. \mlabel{ob}
\end{eqnarray}
Then the 2-cochain $\Ob$ is a $2$-cocycle, that is, $d_\mrho
\Ob=0$.
\end{pro}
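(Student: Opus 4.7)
My plan is to exploit the graded Lie algebra $(\huaC^*(V,\g),\Courant{\cdot,\cdot})$ of Proposition~\ref{pro:gla} together with its link to $d_\mrho$ from Proposition~\ref{pro:danddT}. The key observation is that $\Ob$ should appear, up to a sign, as the coefficient of $t^{n+1}$ in the graded bracket $\tfrac12\Courant{T_t,T_t}$, whereupon the vanishing of $d_\mrho\Ob$ will drop out of the graded Jacobi identity. This follows the standard Maurer--Cartan pattern by which obstructions to extending a deformation are automatically cocycles.

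First I would expand $\Courant{\tau_i,\tau_j}(u,v)$ via Eq.~\eqref{o-bracket} with $m=n=1$. Since each $\tau_k\in\Hom(V,\g)$ has degree $1$, graded antisymmetry gives $\Courant{\tau_i,\tau_j}=\Courant{\tau_j,\tau_i}$. Summing over $i+j=n+1$ with $1\leq i,j\leq n$ and exploiting this symmetry via the substitution $i\leftrightarrow j$, the ``mixed'' terms in $\Courant{\tau_i,\tau_j}$ pair up to yield
\[
\sum_{\substack{i+j=n+1\\ 1\leq i,j\leq n}}\Courant{\tau_i,\tau_j}(u,v)=-2\,\Ob(u,v).
\]
Because $\tau_k=0$ for $k>n$ in $T_t=\sum_{i=0}^{n}\tau_it^i$, this sum is exactly twice the coefficient of $t^{n+1}$ in $\tfrac12\Courant{T_t,T_t}$. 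Writing $R_k$ for the coefficient of $t^k$ in $\tfrac12\Courant{T_t,T_t}\in\huaC^*(V,\g)[t]$, we obtain $R_{n+1}=-\Ob$.

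By Proposition~\ref{pro:gla}, the order-$n$ deformation condition Eq.~\eqref{O-operator of order n} is equivalent to $\tfrac12\Courant{T_t,T_t}\equiv 0\pmod{t^{n+1}}$, so $R_k=0$ for $0\leq k\leq n$. For any degree-$1$ element $A$ in a graded Lie algebra one has $\Courant{A,\Courant{A,A}}=0$, a direct consequence of graded Jacobi combined with graded antisymmetry; applied to $A=T_t$ in the polynomial ring $\huaC^*(V,\g)[t]$ this yields $\Courant{T_t,R}=0$ where $R=\tfrac12\Courant{T_t,T_t}$. Extracting the coefficient of $t^{n+1}$ from $\Courant{T_t,R}=\sum_{i,k}\Courant{\tau_i,R_k}t^{i+k}$ and using $R_k=0$ for $k\leq n$ together with $i\geq 0$, the only surviving contribution is $i=0$, $k=n+1$, giving $\Courant{T,R_{n+1}}=-\Courant{T,\Ob}=0$. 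Finally, Proposition~\ref{pro:danddT} applied to $\Ob\in\Hom(\wedge^2V,\g)$ delivers
\[
d_\mrho\Ob=(-1)^{2}d_T\Ob=\Courant{T,\Ob}=0.
\]

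The one genuinely delicate step is the identification $\sum\Courant{\tau_i,\tau_j}=-2\,\Ob$: one must properly handle the factors $(-1)^{mn}$ and the two distinct shuffle sets $\mathbb S_{(m,1,n-1)}$ and $\mathbb S_{(n,1,m-1)}$ in Eq.~\eqref{o-bracket}, so that after the $i\leftrightarrow j$ symmetrization the two $\rho$-terms and the two bracket-terms combine to produce exactly $-2\,\Ob$. Once this identification is in place, the remainder is a one-line extraction from the graded Jacobi identity combined with the sign reconciliation of Proposition~\ref{pro:danddT}.
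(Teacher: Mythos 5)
Your proof is correct and follows essentially the same route as the paper: both identify $\Ob$ with $-\tfrac{1}{2}\sum_{i+j=n+1,\,i,j\geq 1}\Courant{\tau_i,\tau_j}$, derive $\Courant{T,\Ob}=0$ from the graded Jacobi identity combined with the order-$n$ deformation equations, and conclude via Proposition~\ref{pro:danddT}. Your packaging of the computation as the $t^{n+1}$-coefficient of $\Courant{T_t,\Courant{T_t,T_t}}=0$ is a mildly cleaner organization of the paper's term-by-term substitution (whose final step, the vanishing of the symmetrized triple bracket, is the same $\Courant{A,\Courant{A,A}}=0$ fact in disguise), but the underlying mechanism is identical.
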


\noindent \emph{Proof.} By the bracket in Eq.~\eqref{o-bracket},
we have $\Ob=-\frac{1}{2}\sum\limits_{i+j=n+1\atop
i,j\geq1}\Courant{\tau_i,\tau_j}. $ Since $T_t$ is an order
$n$ deformation  of the $\huaO$-operator $T$, for all $0\leq i\leq
n$, we have
\begin{eqnarray}
\sum\limits_{k+l=i\atop
k,l\geq0}\Big([\tau_k(u),\tau_l(v)]-\tau_k(\rho(\tau_l(u))(v)-\rho(\tau_l(v))(u))\Big)=0, \quad\forall u,v\in V,
\mlabel{deformation-1}
\end{eqnarray}
which is equivalent to
\begin{eqnarray}
-\frac{1}{2}\sum\limits_{k+l=i\atop k,l\geq1}\Courant{\tau_k,\tau_l}=\Courant{T,\tau_{i}}.
\mlabel{deformation-2}
\end{eqnarray}
Then we have
\begin{eqnarray*}
d_\mrho \Ob&=&(-1)^2\Courant{T,\Ob}\\
             &=&-\frac{1}{2}\sum\limits_{i+j=n+1\atop i,j\geq1}\Courant{T,\Courant{\tau_i,\tau_j}}\\
             &=&-\frac{1}{2}\sum\limits_{i+j=n+1\atop i,j\geq1}\Big(\Courant{\Courant{T,\tau_i},\tau_j}-\Courant{\tau_i,\Courant{T,\tau_j}}\Big)\\
             &\stackrel{\eqref{deformation-2}}{=}&\frac{1}{4}\sum\limits_{i'+i''+j=n+1\atop i',i'',j\geq1}\Courant{\Courant{\tau_{i'},\tau_{i''}},\tau_j}-\frac{1}{4}
       \sum\limits_{i+j'+j''=n+1\atop i,j',j''\geq1}\Courant{\tau_i,\Courant{\tau_{j'},\tau_{j''}}}\\
             &=&\frac{1}{2}\sum\limits_{i'+i''+j=n+1\atop i',i'',j\geq1}\Courant{\Courant{\tau_{i'},\tau_{i''}},\tau_j}
             =0. \hspace{8cm} \qed
\end{eqnarray*}

 \begin{defi}
  Let $T_t=\sum_{i=0}^n\tau_i t^i$ be an order $n$ deformation  of an $\huaO$-operator $T$ on a Lie algebra
$\g$ with respect to a representation $(V;\rho)$.  The cohomology
class $[\Ob]\in\huaH^2(V,\g)$ is called the {\bf obstruction
class} of  $T_t$ being extendable.
  \vspace{-.1cm}
  \end{defi}

\begin{thm}\label{thm:extendable}
Let $T_t=\sum_{i=0}^n\tau_i t^i$ be an order $n$ deformation of an \oop $T$ on a Lie algebra $\g$ with respect to a representation $(V;\rho)$. Then
$T_t$ is extendable if and only if the obstruction class $[\Ob]$ is trivial.
\end{thm}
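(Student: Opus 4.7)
The plan is to unpack the extension condition by isolating the coefficient of $t^{n+1}$ in the $\huaO$-operator equation for $\widetilde{T}_t := T_t + \tau_{n+1}t^{n+1}$, and rewriting it as a linear equation in $\tau_{n+1}$ with ``source term'' $\Ob$. Extendability then amounts to solvability of this linear equation, which in turn is equivalent to $\Ob$ being a coboundary.

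Since $\tau_{n+1}$ contributes to the expansion only at orders $\geq n+1$, the coefficients of $t^0,t^1,\ldots,t^n$ in the $\huaO$-operator equation for $\widetilde{T}_t$ coincide with those for $T_t$ and thus vanish by hypothesis. So $\widetilde{T}_t$ is an order $n+1$ deformation if and only if the $t^{n+1}$-coefficient
\[
\sum_{i+j=n+1,\, i,j\geq 0}\Big([\tau_i(u),\tau_j(v)] - \tau_i\bigl(\rho(\tau_j(u))(v) - \rho(\tau_j(v))(u)\bigr)\Big)
\]
vanishes for all $u,v\in V$. I would split this sum according to whether $\min(i,j)\geq 1$ or $\min(i,j)=0$: the former part is, by definition, $\Ob(u,v)$, while the latter consists of the contributions from $(i,j)=(0,n+1)$ and $(n+1,0)$, which are linear in $\tau_{n+1}$ and pair it with $T=\tau_0$. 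A direct comparison with formula~\eqref{eq:odiff} for $k=1$, using $[\tau_{n+1}(u),Tv]=-[Tv,\tau_{n+1}(u)]$, shows that this $\tau_{n+1}$-linear part equals precisely $d_\mrho \tau_{n+1}(u,v)$. Hence the extension equation reduces to
\[
d_\mrho \tau_{n+1} = -\Ob,
\]
which admits a solution $\tau_{n+1}\in \huaC^1(V,\g)$ if and only if $\Ob \in \Img(d_\mrho) = \huaB^2(V,\g)$, i.e., $[\Ob]=0$ in $\huaH^2(V,\g)$.

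The only substantive step is the identification of the $\tau_{n+1}$-linear contribution with $d_\mrho\tau_{n+1}$. This is a matter of term matching and sign tracking against \eqref{eq:odiff}; alternatively, one may bypass the elementary computation entirely by invoking Proposition~\ref{pro:danddT} to replace $d_\mrho \tau_{n+1}$ by $-\Courant{T,\tau_{n+1}}$ and combining with the identity $\Ob=-\tfrac12\sum_{i+j=n+1,\,i,j\geq 1}\Courant{\tau_i,\tau_j}$ already established in the proof of $d_\mrho \Ob=0$. Either route, the main obstacle is purely bookkeeping of signs, and the cohomological conclusion is then immediate from $\huaH^2(V,\g) = \huaZ^2(V,\g)/\huaB^2(V,\g)$.
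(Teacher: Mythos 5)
Your proposal is correct and follows essentially the same route as the paper: both isolate the $t^{n+1}$-coefficient of the deformation equation, identify the $\tau_{n+1}$-linear part with $d_\mrho\tau_{n+1}$ (the paper does this via $\Courant{T,\tau_{n+1}}$ and Proposition~\ref{pro:danddT}, which is exactly your suggested shortcut), and reduce extendability to the solvability of $d_\mrho\tau_{n+1}=-\Ob$, i.e.\ to $[\Ob]=0$ in $\huaH^2(V,\g)$. Your sign bookkeeping checks out against Eq.~\eqref{eq:odiff}.
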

\begin{proof}
Suppose that an order $n$ deformation $T_t$   of the $\huaO$-operator $T$ extends to an order $n+1$ deformation. Then Eq. \eqref{deformation-2} holds for $i=n+1$. Thus, we have
$
\Ob=-d_\mrho \tau_{n+1},
$
which implies that the obstruction class $[\Ob]$ is trivial.

Conversely, if the obstruction class $[\Ob]$ is trivial, suppose that
$
\Ob=-d_\mrho \tau_{n+1}
$
for some 1-cochain $\tau_{n+1} \in \huaC^1(V,\g)$. Set
$
\widetilde{T}_t:=T_t+\tau_{n+1}t^{n+1}.
$
Then $\widetilde{T}_t$ satisfies Eq.~\eqref{deformation-1} for $0\leq i\leq n+1$. So $\widetilde{T}_t$ is an order $n+1$ deformation, which means that  $T_t$ is extendable. 
  \vspace{-.1cm}
\end{proof}

\begin{cor} Let $T$ be an \oop on a Lie
algebra $\g$ with respect to a representation $(V;\rho)$. If
$\huaH^2(V,\g)=0$, then every $1$-cocycle in $\huaZ^1(V,\g)$ is
the infinitesimal of some one-parameter formal deformation of the
$\huaO$-operator $T$.
  \vspace{-.2cm}
\end{cor}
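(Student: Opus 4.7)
The plan is to proceed by induction on the order of the deformation, using Theorem~\ref{thm:extendable} as the inductive step and the cocycle hypothesis as the base case.

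First I would verify the base case: given $\tau_1\in\huaZ^1(V,\g)$, the truncation $T_t:=T+\tau_1 t$ is an order $1$ deformation of $T$. Indeed, expanding Eq.~\eqref{O-operator of order n} modulo $t^2$ yields two conditions, corresponding to coefficients of $t^0$ and $t^1$ in Eq.~\eqref{deformation-equation}. The $t^0$ equation is simply the $\huaO$-operator relation for $T=\tau_0$, which holds by hypothesis. The $t^1$ equation is exactly the $1$-cocycle condition $d_\mrho\tau_1=0$ as spelled out after Eq.~\eqref{eq:odiff} (and verified in the proof of Proposition~\ref{pro:cocycle}), which holds since $\tau_1\in\huaZ^1(V,\g)$.

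Next I would run the induction. Suppose an order $n$ deformation $T_t^{(n)}=T+\tau_1 t+\tau_2 t^2+\cdots+\tau_n t^n$ extending $\tau_1$ has been constructed. Its obstruction class $[\Ob]\in\huaH^2(V,\g)$, defined by Eq.~\eqref{ob}, vanishes trivially because $\huaH^2(V,\g)=0$ by assumption. By Theorem~\ref{thm:extendable}, $T_t^{(n)}$ extends to an order $n+1$ deformation $T_t^{(n+1)}=T_t^{(n)}+\tau_{n+1}t^{n+1}$ for some $\tau_{n+1}\in\huaC^1(V,\g)$ satisfying $\Ob=-d_\mrho\tau_{n+1}$.

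Finally, passing to the inverse limit of the compatible system $\{T_t^{(n)}\}_{n\geq 1}$ in ${\rm Hom}_{\K}(V,\g)[[t]]$ produces a formal power series $T_t=\sum_{i\geq 0}\tau_i t^i$ with $\tau_0=T$ whose truncation modulo $t^{n+1}$ is $T_t^{(n)}$ for every $n$. Since Eq.~\eqref{O-operator} holds modulo $t^{n+1}$ for each $n$, it holds as an identity in $\g[[t]]$, so $T_t$ is a one-parameter formal deformation of $T$ with infinitesimal $\tau_1$. No step is really an obstacle here; the only thing worth flagging is that each extension $\tau_{n+1}$ is non-canonical (one only knows it exists), but this is immaterial for the conclusion since we only need existence of some formal deformation realizing $\tau_1$ as its infinitesimal.
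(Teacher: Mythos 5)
Your proof is correct and is exactly the intended argument: the paper states this corollary without proof as an immediate consequence of Theorem~\ref{thm:extendable}, and your induction (base case from the $1$-cocycle condition, inductive step from the vanishing of the obstruction class in $\huaH^2(V,\g)=0$, then passage to the inverse limit) is the standard way to fill it in. No gaps.
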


\section{Deformations of Rota-Baxter operators of weight 0}
\mlabel{sec:rbar}

In this section we consider Rota-Baxter operators of weight
0 whose definition is recalled in Definition~\ref{de:conc}.
They form an important case of \oops,   for the {\bf adjoint
representation}
$$\ad: \g \longrightarrow \gl (\g),\quad x \mapsto \ad_x=[x,\cdot], \quad \forall x\in \g.$$
The deformation theory in the previous sections specializes to a
deformation theory of  Rota-Baxter operators of weight 0.
We will provide some applications without repeating all the
details.

A Lie algebra $\g$ with a Rota-Baxter operator $R$ of weight 0 is
called a {\bf Rota-Baxter Lie algebra}. The associated pre-Lie algebra structure on $\g$ is given by
$x\cdot_R y:=[Rx,y]$ for all $x,y\in g$, and its sub-adjacent Lie algebra structure is given by $[x,y]_R:=[Rx,y]+[x,Ry]$ for all $x,y \in \g$.

As a consequence of Proposition~\ref{pro:gla} and Theorem~\ref{thm:deformation}, we have

\begin{cor} Let $\g$ be a Lie algebra.
\begin{enumerate}
\item
$\huaC^*(\g,\g):=(\oplus_{k=0}^{\dim(\g)}\Hom(\wedge^k\g,\g),\Courant{\cdot,\cdot})$ is  a graded Lie algebra, where the graded Lie bracket $\Courant{\cdot,\cdot}$ is given by Eq.~\eqref{o-bracket}.
\item
$R\in\gl(\g)$ is a Rota-Baxter operator of weight $0$ if and only if $R$ is a Maurer-Cartan element of $\huaC^*(\g,\g)$.
\item
A Rota-Baxter operator $R$ of weight $0$ on $\g$ gives rise to a differential $d_R$ on $\huaC^*(\g,\g)$ by
$d_R:=\Courant{R,\cdot}.$
Further a linear map $R':\g\longrightarrow \g$, $R+R'$ is a Rota-Baxter operator of weight $0$ if and only if $R'$ is a Maurer-Cartan element of the differential graded Lie algebra $(\huaC^*(\g,\g),\Courant{\cdot,\cdot},d_R)$.
\end{enumerate}
\end{cor}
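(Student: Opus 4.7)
The plan is to obtain all three statements as direct specializations of the general theory developed for $\huaO$-operators in Section~\ref{ss:mce}, applied to the case where the representation $(V;\rho)$ is the adjoint representation $(\g;\ad)$ of $\g$ on itself. The key observation, already recorded just after Definition~\ref{de:conc}, is that a Rota-Baxter operator of weight $0$ on $\g$ is exactly the same data as an $\huaO$-operator on $\g$ with respect to $(\g;\ad)$, since Eq.~\eqref{eq:defiO} with $\rho=\ad$ reads $[Ru,Rv]=R([Ru,v]+[u,Rv])$, which is Eq.~\eqref{eq:rbo} with $\lambda=0$.

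For part (1), I would apply Proposition~\ref{pro:gla} with $V=\g$ and $\rho=\ad$. The graded vector space $\huaC^*(V,\g)=\oplus_{k=0}^{\dim V}\Hom(\wedge^kV,\g)$ becomes $\huaC^*(\g,\g)=\oplus_{k=0}^{\dim\g}\Hom(\wedge^k\g,\g)$, and the formula \eqref{o-bracket} defines the bracket $\Courant{\cdot,\cdot}$ on it verbatim, just with every occurrence of $\rho$ replaced by $\ad$. By Proposition~\ref{pro:gla} this is a graded Lie algebra, so no further verification is needed.

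For part (2), I would invoke the second assertion of Proposition~\ref{pro:gla}: Maurer-Cartan elements of $\huaC^*(\g,\g)$ coincide with $\huaO$-operators with respect to $(\g;\ad)$, hence with Rota-Baxter operators of weight $0$. One may check this concretely by computing, for $R\in\gl(\g)$,
\[
\Courant{R,R}(x,y)=2\bigl([Rx,Ry]-R([Rx,y]+[x,Ry])\bigr),
\]
so the Maurer-Cartan equation $\tfrac{1}{2}\Courant{R,R}=0$ (with $d=0$) is exactly the defining identity \eqref{eq:rbo} with $\lambda=0$.

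For part (3), I would cite Theorem~\ref{thm:deformation} in the specialized setting: since $R$ is a Maurer-Cartan element by (2), Proposition~\ref{pp:mce} guarantees that $d_R:=\Courant{R,\cdot}$ is a differential on $\huaC^*(\g,\g)$, turning it into a differential graded Lie algebra. The same proposition, or equivalently the direct application of Theorem~\ref{thm:deformation} with $V=\g$ and $\rho=\ad$, then gives the equivalence: $R+R'$ is again a Rota-Baxter operator of weight $0$ on $\g$ if and only if $R'$ is a Maurer-Cartan element of $(\huaC^*(\g,\g),\Courant{\cdot,\cdot},d_R)$. There is no substantive obstacle here; the entire proof is a transcription of earlier general results to this special case, and the only thing worth flagging explicitly is the identification of the adjoint representation's $\huaO$-operator condition with the Rota-Baxter identity of weight zero.
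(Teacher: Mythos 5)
Your proposal is correct and matches the paper exactly: the corollary is stated there as an immediate consequence of Proposition~\ref{pro:gla} and Theorem~\ref{thm:deformation} specialized to the adjoint representation, with no further argument given. Your explicit computation of $\Courant{R,R}$ and the identification of the weight-zero Rota-Baxter identity with the $\huaO$-operator condition for $(\g;\ad)$ is precisely the intended justification.
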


By Lemma \ref{lem:rep}, we obtain
    \begin{cor}
   Let $R$ be a Rota-Baxter operator  of weight $0$ on a Lie algebra $\g$. Then
      \begin{equation}
\mrho:\g\longrightarrow\gl(\g), \quad      \mrho(x):=  \ad_{R(x)}- R\circ\ad_x, \quad \forall x\in \g,
      \end{equation}
      is a representation of the sub-adjacent Lie algebra $(\g,[\cdot,\cdot]_R)$.
   \mlabel{cor:repR}
    \end{cor}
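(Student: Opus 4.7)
The plan is a direct specialization of Lemma \ref{lem:rep}. Recall from the discussion following Definition \ref{de:conc} that a Rota-Baxter operator $R$ of weight $0$ on a Lie algebra $\g$ is precisely an $\huaO$-operator on $\g$ with respect to the adjoint representation $(\g;\ad)$. Thus the hypotheses of Lemma \ref{lem:rep} are satisfied with $T=R$, $V=\g$, and $\rho=\ad$, and the lemma immediately produces a representation of the sub-adjacent Lie algebra $(\g,[\cdot,\cdot]_R)$ on the vector space $\g$.

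The only thing to verify is that the formula provided by Lemma \ref{lem:rep}, namely
$$\mrho(u)(x)=[Tu,x]+T\rho(x)(u),$$
reduces to the stated expression $\mrho(x)=\ad_{R(x)}-R\circ\ad_x$ under this specialization. Substituting $T=R$ and $\rho=\ad$, and relabelling the variables to match the convention in the corollary, we compute, for any $x,y\in\g$,
$$\mrho(x)(y)=[Rx,y]+R(\ad_y(x))=[Rx,y]+R[y,x]=[Rx,y]-R[x,y]=\ad_{R(x)}(y)-(R\circ\ad_x)(y),$$
which is exactly the displayed formula in the statement. The conclusion that $\mrho$ is a representation of the sub-adjacent Lie algebra $(\g,[\cdot,\cdot]_R)$ then follows from Lemma \ref{lem:rep}.

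There is essentially no obstacle: the proof is a one-line unwinding of definitions. The only bookkeeping point is that the term $T\rho(x)(u)$ becomes $R[y,x]=-R[x,y]$ by the antisymmetry of the Lie bracket, which accounts for the minus sign in front of $R\circ\ad_x$ in the stated formula. No further computation is necessary, since the representation property has already been established in full generality in Lemma \ref{lem:rep}.
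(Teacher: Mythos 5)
Your proof is correct and follows exactly the paper's route: the paper derives this corollary as an immediate specialization of Lemma \ref{lem:rep} to the adjoint representation, which is precisely what you do. Your explicit sign check, turning $T\rho(x)(u)$ into $R[y,x]=-R[x,y]$ via antisymmetry, is the only nontrivial bookkeeping and you handle it correctly.
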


    \begin{rmk}
      For the sub-adjacent Lie algebra $(\g,[\cdot,\cdot]_R)$, there are already two representations on itself. The first one is the adjoint representation $\rho_1$ given by
      $$
      \rho_1(x)(y):=[x,y]_R=[Rx,y]+[x,Ry]=(\ad_{Rx}+\ad_x\circ
      R)(y), \quad \forall x,y\in \g.
      $$
      The second one comes from the left multiplication of the pre-Lie algebra structure:
      $$
      \rho_2(x)y:=x\cdot_R y=[Rx,y]=\ad_{Rx}y, \quad \forall x,y\in \g.
      $$
      The representation $\mrho$ in Corollary \ref{cor:repR} is apparently different from the above two representations.
    \end{rmk}
As a special case of Definition~\ref{de:opcoh}, we give
    \begin{defi}
Let $R$ be a Rota-Baxter operator  of weight zero  on a Lie algebra $\g$. Then the cohomology of the cochain complex $(\oplus_k\huaC^k(\g,\g),d_\mrho)$, where the Chevalley-Eilenberg coboundary operator $d_\mrho:\huaC^k(\g,\g)\longrightarrow \huaC^{k+1}(\g,\g)$ is given by Eq.~\eqref{eq:odiff},
      is called the \bf{cohomology of the Rota-Baxter operator $R$}.
    \end{defi}

    This cohomology can be used to control infinitesimal, formal and order $n$ deformations of Rota-Baxter operators of weight 0. We only give some details on infinitesimal deformations.

    \begin{defi}Let $R$ be a Rota-Baxter operator  of weight $0$ on a Lie algebra $\g$.
    \begin{itemize}
      \item[\rm(i)] Let $\huaR:\g\longrightarrow\g$ be a linear operator.  If for all $t\in \K$, $R_t:=R+t\huaR$ is a Rota-Baxter operator  of weight $0$ on  $\g$, we say that $\huaR$ generates a {\bf one-parameter infinitesimal deformation of $R$}.
           \item[\rm(ii)] Let $R_t^1:=R+t\huaR_1$ and $R_t^2:=R+t\huaR_2$ be two one-parameter infinitesimal deformations of $R$ generated by $\huaR_1$ and $\huaR_2$ respectively. They are said to be {\bf equivalent} if there exists an $x\in\g$ such that
$({\Id}_\g+t\ad_x,{\Id}_\g+t\ad_x)$ is a homomorphism from $R^2_t$ to $R^1_t$. In particular, a
deformation $R_t=R+t\huaR$ of   $R$ is said to
be {\bf trivial} if there exists an $x\in\g$ such that
$({\Id}_\g+t\ad_x,{\Id}_\g+t\ad_x)$ is a homomorphism  from $R_t$ to $R$.
    \end{itemize}
    \end{defi}

\begin{pro}
 Let $R$ be a Rota-Baxter operator  of weight $0$ on a Lie algebra $\g$. If $\huaR$ generates a one-parameter infinitesimal deformation of $R$, then $\huaR$ is a $1$-cocycle. Moreover, if two one-parameter infinitesimal deformations of $R$ generated by $\huaR_1$ and $\huaR_2$ are equivalent, then $\huaR_1$ and $\huaR_2$ are in the same cohomological class.
\end{pro}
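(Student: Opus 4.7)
The plan is to deduce this proposition as a direct specialization of the general results already established for $\huaO$-operators, taking $T=R$, $V=\g$, and $\rho=\ad$, in which case the defining relation of an $\huaO$-operator reduces to the Rota-Baxter identity \eqref{eq:rbo} with $\lambda=0$. Under this identification, the cocycle differential $d_\mrho$ is the one associated to the representation $\mrho(x)=\ad_{R(x)}-R\circ\ad_x$ of the sub-adjacent Lie algebra $(\g,[\cdot,\cdot]_R)$ from Corollary~\ref{cor:repR}, so the proposition is essentially the translation of Theorem~\ref{thm:iso3} to this setting.

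For the first claim, I would expand the weight-zero Rota-Baxter equation applied to $R_t=R+t\huaR$:
\[
[R_t(x),R_t(y)]=R_t\bigl([R_t(x),y]+[x,R_t(y)]\bigr),
\]
and collect the coefficient of $t^1$. This yields
\[
[Rx,\huaR y]+[\huaR x,Ry]=R\bigl([\huaR x,y]+[x,\huaR y]\bigr)+\huaR\bigl([Rx,y]+[x,Ry]\bigr),
\]
which vanishes for all $x,y\in\g$ by hypothesis. A direct inspection of the formula \eqref{eq:odiff} with $k=1$, $T=R$, and $\rho=\ad$ shows that the left-hand side minus the right-hand side is precisely $d_\mrho\huaR(x,y)$, so $\huaR\in\huaZ^1(\g,\g)$.

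For the equivalence claim, I would unravel Definition~\ref{defi:isoO} applied to $(\phi_\g,\phi_V)=(\Id_\g+t\ad_x,\Id_\g+t\ad_x)$. The compatibility condition \eqref{defi:isocon2} is automatic since $\rho=\ad$ and $\ad_x$ is a derivation of the bracket. The relation \eqref{defi:isocon1}, namely $R_t^1\circ(\Id_\g+t\ad_x)=(\Id_\g+t\ad_x)\circ R_t^2$, gives upon extracting the $t^1$ coefficient:
\[
\huaR_1(y)+R[x,y]=\huaR_2(y)+[x,Ry],\quad \forall y\in\g.
\]
Rewriting this as $\huaR_2(y)-\huaR_1(y)=[Ry,x]+R[x,y]$ and comparing with $d_\mrho x$ applied to $y$ (the formula \eqref{eq:odiff} at $k=0$ yields $(d_\mrho x)(y)=[Ry,x]+R[x,y]$), one obtains $\huaR_2-\huaR_1=d_\mrho x\in\huaB^1(\g,\g)$, so $\huaR_1$ and $\huaR_2$ represent the same class in $\huaH^1(\g,\g)$.

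Since every step is a bookkeeping translation of already-proved identities, there is no serious obstacle; the only point requiring care is the sign and index matching between the expansion of the Rota-Baxter equation (resp.\ the equivalence relation) and the explicit Chevalley--Eilenberg coboundary formula \eqref{eq:odiff} for $\mrho$. Once this matching is verified, both statements follow immediately from Theorem~\ref{thm:iso3}.
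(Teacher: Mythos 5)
Your proposal is correct and follows essentially the same route as the paper, which states this proposition without a separate proof precisely because it is the specialization of the Section~4 analysis (Eqs.~\eqref{eq:deform1}, \eqref{eq:deforiso1} and Theorem~\ref{thm:iso3}) to $T=R$, $V=\g$, $\rho=\ad$; your $t^1$-coefficient computations match the paper's formulas \eqref{eq:odiff} at $k=0,1$ exactly. The only slight imprecision is the claim that condition~\eqref{defi:isocon2} is ``automatic'': with $\phi_\g=\phi_V=\Id_\g+t\ad_x$ it coincides with the requirement that $\Id_\g+t\ad_x$ be a Lie algebra endomorphism (forcing $[[x,y],[x,z]]=0$), which is already part of the definition of a homomorphism and is irrelevant to the cohomology-class conclusion, so nothing is lost.
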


\begin{defi}
   An element $x$ in a Rota-Baxter Lie algebra $(\g,R)$ is called a {\bf Nijenhuis element} if
      \begin{eqnarray}
        \label{eq:NijRB1}~[x,[Ry,x]+R[x,y]]&=&0,\quad \forall y\in\g,\\
       \label{eq:NijRB2}~[ [x,y],[x,z]]&=&0,\quad\forall y,z\in\g.
      \end{eqnarray}
    \end{defi}

    \begin{pro}
 Let $R$ be a Rota-Baxter operator  of weight $0$ on a Lie algebra $\g$.   If $\huaR$ generates a trivial one-parameter infinitesimal deformation of $R$, i.e.  there exists an $x\in\g$ such that
$({\Id}_\g+t\ad_x,{\Id}_\g+t\ad_x)$ is a homomorphism  from $R_t=R+t\huaR$ to $R$, then $x$ is a Nijenhuis element.

Conversely, for any Nijenhuis element $x\in\g$, $R_t:=R+t \huaR$ with $\huaR:=d_\mrho x$ is a trivial infinitesimal deformation of   $R$.
    \end{pro}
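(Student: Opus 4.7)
The plan is to obtain both directions as specializations of the general theory in Section~\ref{sec:infdef} to the case $V=\g$, $\rho=\ad$. The backward direction will then reduce to Theorem~\ref{thm:trivial}, while the forward direction follows from the inline analysis preceding it (Eqs.~\eqref{eq:Nij1}--\eqref{eq:Nij3} together with \eqref{eq:deforiso1}--\eqref{eq:deforiso2}). The bulk of the work is matching the two versions of the Nijenhuis element definition.

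First I would verify that $x\in\g$ is a Nijenhuis element of the Rota-Baxter Lie algebra $(\g,R)$ in the sense of Eqs.~\eqref{eq:NijRB1}--\eqref{eq:NijRB2} if and only if it is a Nijenhuis element of $R$ viewed as an $\huaO$-operator with respect to the adjoint representation. Substituting $T=R$ and $\rho=\ad$ into \eqref{eq:Nij1} yields \eqref{eq:NijRB2} verbatim; into \eqref{eq:Nij2} yields $\ad_{[x,y]}\circ\ad_x=0$, i.e.\ $[[x,y],[x,z]]=0$ for all $y,z\in\g$, which is again \eqref{eq:NijRB2} after renaming; and into \eqref{eq:Nij3} it becomes $[x,[Ry,x]+R[x,y]]=0$, which is precisely \eqref{eq:NijRB1}. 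Thus the two notions coincide.

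For the forward direction I would expand the three conditions of Definition~\ref{defi:isoO} for $(\Id_\g+t\ad_x,\Id_\g+t\ad_x)$ to be a homomorphism from $R_t=R+t\huaR$ to $R$ and collect powers of $t$. Condition~(i) at order $t^2$ forces \eqref{eq:NijRB2}; condition~(ii) at order $t$ forces $\huaR=d_\mrho x$; condition~(ii) at order $t^2$ forces $[x,\huaR(y)]=0$ for all $y\in\g$, which upon substituting $\huaR=d_\mrho x$ becomes \eqref{eq:NijRB1}; condition~(iii) at order $t^2$ reproduces \eqref{eq:NijRB2}. Hence $x$ is a Nijenhuis element. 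For the backward direction, given a Nijenhuis element $x\in\g$, the identification in the previous paragraph shows that $x$ is a Nijenhuis element of $R$ viewed as an $\huaO$-operator, so Theorem~\ref{thm:trivial} applied with $T=R$ and $\rho=\ad$ immediately yields that $R_t:=R+t\,d_\mrho x$ is a trivial infinitesimal deformation of $R$. The only real obstacle is the careful extraction of the order-$t^2$ coefficients in the homomorphism conditions; once the Nijenhuis conditions have been matched, everything else is a mechanical specialization.
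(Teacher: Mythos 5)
Your proposal is correct and follows exactly the route the paper intends: the proposition is stated without proof as a specialization of Section~\ref{sec:infdef} to $V=\g$, $\rho=\ad$, and your matching of Eqs.~\eqref{eq:Nij1}--\eqref{eq:Nij3} with Eqs.~\eqref{eq:NijRB1}--\eqref{eq:NijRB2} (noting that \eqref{eq:Nij1} and \eqref{eq:Nij2} collapse to the single condition \eqref{eq:NijRB2} in the adjoint case) together with the $t$-expansion of the homomorphism conditions and an appeal to Theorem~\ref{thm:trivial} is precisely the intended argument. The order-$t$ and order-$t^2$ coefficients you extract are all correct, so there is nothing to add.
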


We next give some examples of Rota-Baxter operators  of weight
0 on low-dimensional Lie algebras where the Nijenhuis elements
can be explicitly determined.

    \begin{ex}
    \label{ex:2dim}
    {\rm
    Consider the unique $2$-dimensional non-abelian Lie algebra on $\mathbb C^2$. The  Lie bracket is given by
   $[e_1,e_2]=e_1$ for for a given basis $\{e_1,e_2\}$.
   For a matrix $\left(\begin{array}{cc}a_{11}&a_{12}\\
a_{21}&a_{22}\end{array}\right)$,
define
$$
   Re_1=a_{11}e_1+a_{21}e_2,\quad Re_2=a_{12}e_1+a_{22}e_2.
$$
Then $R$ is a Rota-Baxter operator of weight $0$ if and only if
   $$
   [Re_1,Re_2]=R([Re_1,e_2]+[e_1,Re_2]).
   $$
   By a straightforward computation, we conclude that
  $R$ is a Rota-Baxter operator of weight zero if and only if
  $$
  (a_{11}+a_{22}) a_{21}=0,\quad a_{11}a_{22}-a_{12}a_{21}=(a_{11}+a_{22})a_{11}.
  $$
So we have the following two cases to consider.

\noindent
(i) If $a_{21}=0$, then we deduce that $a_{11}=0$ and any $R=\left(\begin{array}{cc}0&a_{12}\\
   0&a_{22}\end{array}\right)$ is a Rota-Baxter operator of weight zero. In this case, $x=t_1e_1+t_2e_2$ is a Nijenhuis element if and only if
   $$
   t_2(a_{12}t_2-a_{22}t_1)=0.
   $$
Then for any $t_1\in\mathbb C,$ $t_1e_1$ is a Nijenhuis element for the Rota-Baxter Lie algebra $\Big(\mathbb C^2,[\cdot,\cdot],\left(\begin{array}{cc}0&a_{12}\\
   0&a_{22}\end{array}\right)\Big)$.

\noindent
(ii) If $a_{11}+a_{22}=0$, then $a_{11}a_{22}=a_{12}a_{21}$. In this case, $x=t_1e_1+t_2e_2$ is a Nijenhuis element if and only if
   $$
   t_1^2a_{21}-t_2^2a_{12}-t_1t_2(a_{11} -a_{22})=0.
   $$
   In particular, $e_1+e_2$ is a Nijenhuis element for the Rota-Baxter Lie algebra $\Big(\mathbb C^2,[\cdot,\cdot],\left(\begin{array}{cc}1&-1\\
1&-1\end{array}\right)\Big)$.
   }
    \end{ex}

\begin{ex}{\rm
The {\bf Heisenberg algebra} $H_3(\mathbb C)$ is the
three-dimensional  complex  Lie algebra with basis
elements $e_1, e_2$ and $e_3$ and with Lie brackets
\begin{eqnarray*}
[e_1,e_2]=e_3,\quad [e_1,e_3]=0,\quad  [e_2,e_3]=0.
\end{eqnarray*}
Consider a linear operator $R$ defined by $\left(\begin{array}{ccc}r_{11}&r_{12}&r_{13}\\
r_{21}&r_{22}&r_{23}\\
r_{31}&r_{32}&r_{33}\end{array}\right)$ with respect to the basis $\{e_1,e_2,e_3\}$.
It is straightforward to check that $R$ is a Rota-Baxter operator of weight $0$ if and only if
$$r_{13}=r_{23}=0,\quad
(r_{11}+r_{22})r_{33}=r_{11}r_{22}-r_{21}r_{12}.$$

Now let $R$ be a Rota-Baxter operator of weight 0. For all
$x,y,z\in H_3(\mathbb C)$, by the fact that $[H_3(\mathbb
C),H_3(\mathbb C)]\subset \mathbb Ce_3$, Eq. \eqref{eq:NijRB2}
holds automatically. Then by $r_{13}=r_{23}=0$, we deduce that
$[Ry,x]+R[x,y]\in \mathbb Ce_3$ for any $x,y\in H_3(\mathbb C)$,
 which implies that Eq. \eqref{eq:NijRB1} holds.
Thus, for any Rota-Baxter operator $R$, the set of Nijenhuis
elements of $(H_3(\mathbb C),[\cdot,\cdot],R)$ is the whole space
$H_3(\mathbb C)$.

Furthermore,
any Nijenhuis element $x=t_1e_1+t_2e_2+t_3e_3\in
H_3(\mathbb C), t_1,t_2,t_3\in\mathbb C$,  gives rise to a trivial deformation of the Rota-Baxter operator. Its generator
$\frkT$     is given by
$$
\frkT=d_\mrho x=\left(\begin{array}{ccc}0&0&0\\
0&0&0\\
(r_{11}-r_{33})t_2-r_{21}t_1&(r_{33}-r_{22})t_1+r_{12}t_2&0
\end{array}\right).
$$
}
\end{ex}

\section{Deformations of skew-symmetric $r$-matrices and triangular Lie bialgebras}\label{sec:rmat}

As to be recalled below, a skew-symmetric $r$-matrix corresponds to an $\huaO$-operator on a Lie
algebra with respect to the {\bf coadjoint representation}.
This suggests to define deformations of skew-symmetric $r$-matrices from their corresponding \oops. As it turns out, there is a natural way to define such deformations directly and these two approaches are mostly consistent, yet new information can be obtained by the comparison. We also obtain deformations of triangular Lie bialgebras from their connection with skew-symmetric $r$-matrices.

\subsection{Maurer-Cartan elements and deformations of skew-symmetric $r$-matrices}

Recall that the Lie bracket $[\cdot,\cdot]$ in a Lie algebra $\g$
naturally extends to a graded Lie bracket (known as the
Gerstenhaber bracket) on $\wedge^\bullet\g=\oplus_k\wedge^{k+1}\g$, for which we use the
same notation $[\cdot,\cdot]$. More precisely, we have
$$
[x_1\wedge\cdots \wedge x_p,y_1\wedge\cdots\wedge y_q]=(-1)^{i+j}[x_i,y_j]\wedge x_1\wedge\cdots\hat{x_i}\cdots \wedge x_p\wedge y_1\wedge\cdots\hat{y_j}\cdots\wedge y_q,
$$
for all $x_1,\cdots, x_p,y_1,\cdots, y_q\in\g.$ As already given in Definition~\ref{defi:O} (ii), an element $r\in\wedge^2\g$ is called a {\bf skew-symmetric $r$-matrix} if it satisfies the {\bf classical Yang-Baxter
equation}:
\begin{equation}
[r,r]=0.
\end{equation}
Thus we have the tautological statement that the Maurer-Cartan elements of this graded Lie algebra are simply the skew-symmetric $r$-matrices. Further by Proposition~\ref{pp:mce}, we have

\begin{thm}
  Let $\g$ be a Lie algebra and $r\in\wedge^2\g$ a skew-symmetric $r$-matrix.
  \begin{enumerate}
  \item
  The triple $(\oplus_k\wedge^{k+1}\g,[\cdot,\cdot],d_r)$ is a differential graded Lie algebra, in which elements in $\wedge^{k+1}\g$ are of degree $k$ and  $d_r:\wedge^k\g\longrightarrow\wedge^{k+1}\g$ is defined by
$  d_r=[r,\cdot].$
\item
Let $r'\in\wedge^2\g$. Then $r+r'$ is still a   skew-symmetric $r$-matrix if and only if $r'$ is a Maurer-Cartan element of the differential graded Lie algebra $(\oplus_k\wedge^{k+1}\g,[\cdot,\cdot],d_r)$.
\end{enumerate}
\end{thm}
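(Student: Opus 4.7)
The plan is to deduce this theorem as an immediate application of Proposition~\ref{pp:mce} once the graded Lie algebra structure on $\oplus_k\wedge^{k+1}\g$ is in place. First I would verify that the Gerstenhaber bracket on $\wedge^\bullet\g$, recalled just before the theorem statement, makes $\oplus_k\wedge^{k+1}\g$ into a graded Lie algebra under the convention that an element of $\wedge^{k+1}\g$ has degree $k$. With this grading, the bracket $[\cdot,\cdot]:\wedge^{p+1}\g\otimes\wedge^{q+1}\g\longrightarrow\wedge^{p+q+1}\g$ has degree zero, and graded skew-symmetry and the graded Jacobi identity follow from the standard extension of the Lie bracket of $\g$ by the Leibniz rule; this is classical and I would only cite it rather than reproduce it.

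Granting that, the fact that $r\in\wedge^2\g$ is a skew-symmetric $r$-matrix means precisely $[r,r]=0$, and since $r$ sits in degree $1$, this is exactly the Maurer-Cartan equation in the graded Lie algebra $(\oplus_k\wedge^{k+1}\g,[\cdot,\cdot])$. Part~(1) then follows directly from the first assertion of Proposition~\ref{pp:mce}: the operator $d_r:=[r,\cdot]$ is automatically a differential on this graded Lie algebra, and by the graded Jacobi identity it is a graded derivation of the bracket, giving the differential graded Lie algebra structure.

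For part~(2), I would simply apply the second assertion of Proposition~\ref{pp:mce} to the Maurer-Cartan element $r$ and the candidate element $r'\in\wedge^2\g$. To make the equivalence transparent, I would unwind the definitions: $r+r'$ is a skew-symmetric $r$-matrix iff
\[
0=[r+r',r+r']=[r,r]+2[r,r']+[r',r']=2\,d_r(r')+[r',r'],
\]
which is exactly the Maurer-Cartan equation $d_r(r')+\tfrac{1}{2}[r',r']=0$ for $r'$ in the differential graded Lie algebra $(\oplus_k\wedge^{k+1}\g,[\cdot,\cdot],d_r)$.

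There is no genuine obstacle here; the theorem is essentially a restatement of Proposition~\ref{pp:mce} in the concrete setting of the Schouten/Gerstenhaber bracket. The only point that requires any care is the bookkeeping of the degree shift (placing $\wedge^{k+1}\g$ in degree $k$) so that the sign conventions for Maurer-Cartan elements and for the derivation property of $d_r$ align with those used in Proposition~\ref{pp:mce}; once the conventions match, both parts are one-line consequences.
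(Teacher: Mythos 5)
Your proposal is correct and follows exactly the route the paper takes: the paper itself observes that skew-symmetric $r$-matrices are tautologically the Maurer-Cartan elements of the graded Lie algebra $(\oplus_k\wedge^{k+1}\g,[\cdot,\cdot])$ and then derives both parts as an immediate application of Proposition~\ref{pp:mce}, offering no further proof. Your unwinding of $[r+r',r+r']=2\,d_r(r')+[r',r']$ for part~(2) is a harmless and transparent elaboration of the same argument.
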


An element $r\in\wedge^2\g$ naturally induces a linear map
$r^\sharp:\g^*\longrightarrow\g$ by
$$
\langle r^\sharp(\xi),\eta\rangle=r(\xi,\eta)=\langle r,
\xi\otimes\eta\rangle,\quad \forall
\xi,\eta\in\g^*.
$$

It is well known that $r$ satisfies the classical Yang-Baxter
equation if and only if $r^\sharp$ is an $\huaO$-operator on $\g$
with respect to the coadjoint representation~\cite{Ku}. The
associated pre-Lie algebra structure $\cdot_r$
on $\g^*$ is given by
$$
\xi\cdot_r\eta:=\ad^*_{r^\sharp(\xi)}\eta,\quad \forall
\xi,\eta\in\g^*.
$$
Its sub-adjacent Lie algebra structure
$[\cdot,\cdot]_r$ on $\g^*$ is given by
\begin{equation}\label{eq:bracketr}
[\xi,\eta]_r:=\ad^*_{r^\sharp(\xi)}\eta-\ad^*_{r^\sharp(\eta)}\xi,\quad
\forall \xi,\eta\in\g^*.
\end{equation}

By Theorem \ref{thm:deformation}, deformations of the corresponding \oop $r^\sharp$ are characterized by Maurer-Cartan elements of the differential graded Lie algebra $(\oplus_k\Hom(\wedge^k\g^*,\g),\Courant{\cdot,\cdot},d_{r^\sharp})$. We next establish a relationship between these two differential graded Lie algebras.

Recall that associated to the coadjoint representation, the graded Lie bracket $$\Courant{\cdot,\cdot}:\Hom(\wedge^n\g^*,\g)\times \Hom(\wedge^m\g^*,\g)\longrightarrow \Hom(\wedge^{n+m}\g^*,\g)$$
is given by Eq.~\eqref{o-bracket}
and the differential $d_{r^\sharp}$ is given by
$
d_{r^\sharp}=\Courant{r^\sharp,\cdot}.
$

For any $k\geq 0$, define $\Psi:\wedge^{k+1}\g\longrightarrow \Hom(\wedge^k\g^*,\g)$ by
\begin{equation}\label{eq:defipsi}
 \langle\Psi(P)(\xi_1,\cdots,\xi_k),\xi_{k+1}\rangle=\langle P,\xi_1\wedge\cdots\wedge\xi_k\wedge\xi_{k+1}\rangle,\quad \forall P\in\wedge^{k+1}\g, \xi_1,\cdots, \xi_{k+1}\in\g^*.
\end{equation}
In particular, for any $x\in\g$, $\Psi(x)=x$ and for any $r\in\wedge^2\g$, $\Psi(r)=r^\sharp$.
 The map $\Psi$ establishes a relationship between  the differential graded Lie algebra $(\oplus_k\wedge^{k+1}\g,[\cdot,\cdot],d_r)$ and the differential graded Lie algebra $(\oplus_k\Hom(\wedge^k\g^*,\g),\Courant{\cdot,\cdot},d_{r^\sharp})$ determined  by the \oop $r^\sharp.$

\begin{pro}\label{pro:dglamap1}
Let $\g$ be a Lie algebra and $r\in\wedge^2\g$ a skew-symmetric
$r$-matrix.
  For any $P\in\wedge^{p+1}\g$ and $Q\in\wedge^{q+1}\g$, we have
\begin{eqnarray}\label{eq:antimor}
 \Psi([P,Q])&=&(-1)^{pq}\Courant{\Psi(P),\Psi(Q)},\\
 \label{eq:antimor1}\Psi\circ d_r(P)&=&(-1)^pd_{r^\sharp}\circ\Psi(P).
\end{eqnarray}
\end{pro}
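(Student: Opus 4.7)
The plan is to first establish equation~\eqref{eq:antimor} and then deduce equation~\eqref{eq:antimor1} as an immediate consequence. For \eqref{eq:antimor}, by multilinearity it suffices to check the identity on decomposable elements $P=x_0\wedge x_1\wedge\cdots\wedge x_p$ and $Q=y_0\wedge y_1\wedge\cdots\wedge y_q$. Unwinding the definition of $\Psi$ in Eq.~\eqref{eq:defipsi} for such a decomposable $P$ gives an expression of the form
\[
\Psi(P)(\xi_1,\dots,\xi_p)=\sum_{i=0}^{p}(-1)^{i}\Big(\prod_{j=1}^{p}\text{pairings of }x_{k}\text{ with }\xi_j\Big)\,x_i,
\]
i.e.\ the iterated interior product $\iota_{\xi_p}\cdots\iota_{\xi_1}P$. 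This identification will be the key bridge between the two brackets.

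Next I would substitute these expressions into the formula~\eqref{o-bracket} defining $\Courant{\Psi(P),\Psi(Q)}$, using $\rho=\ad^{*}$. The crucial duality identity $\langle\ad^{*}_{x}\xi,y\rangle=-\langle\xi,[x,y]\rangle$ turns each term of the first two sums of \eqref{o-bracket} into a pairing involving a single commutator $[x_i,y_j]$ wedged with the remaining $x$'s and $y$'s, while the third sum of \eqref{o-bracket} directly produces $[x_i,y_j]$ with the same wedge tail. Separately I would compute $\Psi([P,Q])$ by first expanding the Schouten bracket as $\sum_{i,j}(-1)^{i+j}[x_i,y_j]\wedge(x_0\wedge\cdots\widehat{x_i}\cdots\wedge x_p)\wedge(y_0\wedge\cdots\widehat{y_j}\cdots\wedge y_q)$ and then applying $\Psi$. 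Matching the two sides term-by-term and tracking the reordering of the interior products and the passage between $(m,1,n-1)$-shuffles and omitted-index sums yields \eqref{eq:antimor}; the sign $(-1)^{pq}$ appears when transposing the $p$ slots coming from $P$ past the $q$ slots coming from $Q$.

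For \eqref{eq:antimor1}, note that $r\in\wedge^{2}\g$ has degree $1$ in the Schouten grading and $\Psi(r)=r^{\sharp}$. Applying \eqref{eq:antimor} with the first argument equal to $r$ and the second equal to $P\in\wedge^{p+1}\g$ gives
\[
\Psi([r,P])=(-1)^{1\cdot p}\Courant{\Psi(r),\Psi(P)}=(-1)^{p}\Courant{r^{\sharp},\Psi(P)},
\]
which is precisely the required identity $\Psi\circ d_{r}(P)=(-1)^{p}d_{r^{\sharp}}\circ\Psi(P)$ since $d_{r}=[r,\cdot]$ and $d_{r^{\sharp}}=\Courant{r^{\sharp},\cdot}$.

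The main obstacle is the bookkeeping in Step~2: one must carefully reindex the shuffle sums in \eqref{o-bracket} to recognize them as sums indexed by the pairs $(i,j)$ with omitted entries $x_i,y_j$, and track a fair number of signs (arising from moving the ``distinguished'' slots of $\Psi(P),\Psi(Q)$, from $\ad^{*}$-duality, and from shuffle signatures). Apart from this combinatorial matching the argument is essentially formal, and once \eqref{eq:antimor} is in place \eqref{eq:antimor1} follows without additional work.
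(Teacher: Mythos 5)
Your proposal is correct, and your derivation of \eqref{eq:antimor1} from \eqref{eq:antimor} is word-for-word the paper's argument: apply \eqref{eq:antimor} with first slot $r$ (degree $1$) and second slot $P$ (degree $p$) to get the sign $(-1)^{p}$. For \eqref{eq:antimor} itself you take a slightly different route. The paper verifies the identity only in the bottom case $P,Q\in\g$ (where $\Psi(x)=x$ and $\Courant{x,y}=[x,y]$) and then asserts that ``the general case can be proved by an induction,'' presumably on the wedge degree via the graded Leibniz rule for the Schouten bracket together with an expansion of $\Psi(x\wedge P')$ in terms of $x$ and $\Psi(P')$ --- the same kind of induction that the paper does carry out in detail for Proposition~\ref{pro:danddr}. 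You instead propose a direct verification on decomposables: identify $\Psi(P)$ with the iterated contraction $\iota_{\xi_p}\cdots\iota_{\xi_1}P$, expand both $\Psi([P,Q])$ (via the explicit Schouten formula) and $\Courant{\Psi(P),\Psi(Q)}$ (via Eq.~\eqref{o-bracket} with $\rho=\ad^{*}$ and the duality $\langle\ad^{*}_{x}\xi,y\rangle=-\langle\xi,[x,y]\rangle$), and match terms. Both routes reduce to the same combinatorial bookkeeping, and both your write-up and the paper's leave that bookkeeping unwritten; your version at least makes explicit where each ingredient (the three sums in \eqref{o-bracket}, the $\ad^{*}$-duality, the shuffle-to-omitted-index reindexing, and the origin of the sign $(-1)^{pq}$) enters, so the plan is sound and would compile into a complete proof if the term-matching were carried out. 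No gap in the logic, only in the (admittedly tedious) detail, which the published proof omits as well.
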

\begin{proof}
For all $x,y\in\g$, by the facts that $\Psi(x)=x$ and $[x,y]=\Courant{x,y}$, we have
$$
\Psi([x,y])=[x,y]=\Courant{\Psi(x),\Psi(y)}.
$$
Then the general case of Eq.~\eqref{eq:antimor} can be proved by an induction.

Further by Eq.~\eqref{eq:antimor}, we have
$\Psi\circ d_r(P)=\Psi([r,P])=(-1)^p\Courant{r^\sharp,\Psi(P)}=(-1)^pd_{r^\sharp}\circ\Psi(P).
$
\end{proof}

\begin{cor} Let $\g$ be a Lie algebra and $r\in\wedge^2\g$ a skew-symmetric
$r$-matrix.
 If $r'\in\wedge^2\g$ is a Maurer-Cartan element of the differential graded Lie algebra $(\oplus_k\wedge^{k+1}\g,[\cdot,\cdot],d_r)$, then $r'^\sharp$ is a Maurer-Cartan element of the differential graded Lie algebra $(\oplus_k\Hom(\wedge^k\g^*,\g),\Courant{\cdot,\cdot},d_{r^\sharp})$.
\end{cor}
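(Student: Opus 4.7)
The plan is to show that the map $\Psi$ defined in Eq.~\eqref{eq:defipsi} transports the Maurer-Cartan equation for $r'$ in the first differential graded Lie algebra to the Maurer-Cartan equation for $r'^\sharp$ in the second one. Proposition~\ref{pro:dglamap1} is the only real tool I need: it relates the bracket $[\cdot,\cdot]$ on $\oplus_k\wedge^{k+1}\g$ with $\Courant{\cdot,\cdot}$ and the differential $d_r$ with $d_{r^\sharp}$, both up to explicit signs depending on degrees.

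Concretely, since $r'\in\wedge^2\g$, it has degree $p=1$ in the first graded Lie algebra, and $\Psi(r')=r'^\sharp$ by the discussion following Eq.~\eqref{eq:defipsi}. First I would write down the Maurer-Cartan equation
\begin{equation*}
d_r(r')+\tfrac{1}{2}[r',r']=0
\end{equation*}
and apply $\Psi$ to both sides. Using Eq.~\eqref{eq:antimor1} with $p=1$, the first term becomes $-d_{r^\sharp}(r'^\sharp)$. Using Eq.~\eqref{eq:antimor} with $p=q=1$, the second term becomes $-\tfrac{1}{2}\Courant{r'^\sharp,r'^\sharp}$. Multiplying through by $-1$ yields
\begin{equation*}
d_{r^\sharp}(r'^\sharp)+\tfrac{1}{2}\Courant{r'^\sharp,r'^\sharp}=0,
\end{equation*}
which is exactly the Maurer-Cartan equation for $r'^\sharp$ in $(\oplus_k\Hom(\wedge^k\g^*,\g),\Courant{\cdot,\cdot},d_{r^\sharp})$.

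There is essentially no obstacle here once Proposition~\ref{pro:dglamap1} is in hand; the only thing to be careful about is bookkeeping the signs $(-1)^p$ and $(-1)^{pq}$, both of which happen to be $-1$ and cancel one another, so that the Maurer-Cartan condition is preserved (rather than being sent to its negative). One could also phrase this more abstractly by noting that Eqs.~\eqref{eq:antimor} and~\eqref{eq:antimor1} say that $\Psi$ is an \emph{anti}-morphism of differential graded Lie algebras (up to sign conventions), and such a map always sends Maurer-Cartan elements to Maurer-Cartan elements since the Maurer-Cartan equation is invariant under an overall sign change.
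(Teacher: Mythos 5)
Your proof is correct and is exactly the argument the paper intends: the corollary is stated as an immediate consequence of Proposition~\ref{pro:dglamap1}, and you have filled in precisely that computation, applying $\Psi$ to the Maurer-Cartan equation for $r'$ and noting that both sign factors $(-1)^p$ and $(-1)^{pq}$ equal $-1$ in degree one, so the equation is merely negated and hence preserved.
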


We now recall that a Lie bialgebra is a vector space $\g$ equipped
with a Lie algebra structure
$[\cdot,\cdot]:\wedge^2\g\longrightarrow\g$ and a Lie coalgebra
structure $\delta:\g\longrightarrow\wedge^2\g$ such that $\delta$
is a 1-cocycle on $\g$ with coefficients in $\wedge^2\g$ via
the tensor product of adjoint representations. Note that a Lie
coalgebra structure on $\g$ is equivalent to a Lie algebra
structure on $\g^*$ when $\g$ is finite-dimensional. A Lie
bialgebra homomorphism  between two Lie
bialgebras $(\g,[\cdot,\cdot],\delta_\g)$ and $(\h,[\cdot,\cdot],\delta_\h)$  is  a Lie
algebra homomorphism $\phi:\g\longrightarrow\h$ such that
$$(\phi\otimes \phi)\circ \delta_\g=\delta_\h\circ\phi.$$
In particular, a Lie bialgebra isomorphism is a  Lie algebra
isomorphism $\phi:\g\longrightarrow\h$ such that
${\phi}^*:\h^*\longrightarrow\g^*$ is also a Lie algebra
isomorphism.

Let $r$ be a skew-symmetric $r$-matrix. Define
$\delta:\g\longrightarrow\wedge^2\g$ by
\begin{equation}
\delta(x)=[x,r],\quad\forall x\in\g. \mlabel{eq:delta}
\end{equation}
Then $(\g,[\cdot,\cdot],\delta)$ is a Lie bialgebra, which is
called a {\bf triangular Lie bialgebra}. Note that such a
$\delta$ defines a Lie algebra structure on $\g^*$ which is
exactly the one given by Eq.~\eqref{eq:bracketr}.

\begin{rmk}
  By Eq. \eqref{eq:antimor}, we can recover a very useful formula in the theory of Lie bialgebras and Poisson geometry:
  $$
 \half [r,r](\xi,\eta,\cdot)=[r^\sharp(\xi),r^\sharp(\eta)]-r^\sharp([\xi,\eta]_r),\quad\forall r\in\wedge^2\g, \xi,\eta\in\g.
  $$
It follows from
\begin{eqnarray*}
  \half [r,r](\xi,\eta,\cdot)=\Psi(\half[r,r])(\xi,\eta)=\half\Courant{r^\sharp,r^\sharp}(\xi,\eta) =[r^\sharp(\xi),r^\sharp(\eta)]-r^\sharp([\xi,\eta]_r), \forall r\in\wedge^2\g, \xi,\eta\in\g.
\end{eqnarray*}
\end{rmk}

\subsection{A controlling cohomology of deformations of skew-symmetric $r$-matrices}

Now we establish an analogue of the Andr\'e-Quillen cohomology for
skew-symmetric $r$-matrices to control deformations
of skew-symmetric $r$-matrices. Let
$r\in\wedge^2\g$ be a skew-symmetric $r$-matrix. Then $(\g^*,[\cdot,\cdot]_r)$ is a Lie algebra, where the Lie
bracket $[\cdot,\cdot]_r$ is given by Eq.~\eqref{eq:bracketr}. Let
$(\oplus_k\Hom(\wedge^k\g^*,\K),\dM)$ be the cochain complex
associated to the trivial representation, where the coboundary operator 
$\dM:\Hom(\wedge^k\g^*,\K)\longrightarrow
\Hom(\wedge^{k+1}\g^*,\K)$ is given by
\begin{eqnarray*}
  \dM f(\xi_1,\cdots,\xi_{k+1})=\sum_{i<j}(-1)^{i+j}f([\xi_i,\xi_j]_r,\xi_1,\cdots,\hat{\xi_i},\cdots,\hat{\xi_j},\cdots,\xi_{k+1}),\  \forall f\in \Hom(\wedge^k\g^*,\K), \xi_i\in \g^*.
\end{eqnarray*}
Denote by $H^k(\g^*)$ the $k$-th cohomology group,
called {\bf the $k$-th cohomology group of the skew-symmetric
$r$-matrix $r$}. We will identify $\Hom(\wedge^{k}\g^*,\K)$ with
$ \wedge^k\g $ in the sequel.

\begin{pro}\label{pro:danddr}
Let $\g$ be a Lie algebra and $r\in\wedge^2\g$ a skew-symmetric $r$-matrix. Then we have
\begin{equation}
  \dM f=d_rf:=[r,f],\quad\forall f\in\Hom(\wedge^{k}\g^*,\K)=\wedge^k\g.
\end{equation}
\end{pro}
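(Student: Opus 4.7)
\emph{Approach.} The plan is to verify that both operators $\dM$ and $d_r = [r,\cdot]$ are graded derivations of degree $+1$ on the exterior algebra $(\oplus_k \wedge^k \g, \wedge)$ satisfying the identical Koszul sign rule, and then to check directly that they agree on the generating subspace $\K \oplus \g$. The statement then follows from the universal property of derivations on a graded commutative algebra generated in low degree.

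First I would establish the two derivation properties. For $d_r$, the Schouten--Nijenhuis bracket formula recalled at the start of Section~\ref{sec:rmat} makes $\oplus_k \wedge^k\g$ into a Gerstenhaber algebra; expanding $[u \wedge v, y_1 \wedge \cdots \wedge y_q]$ directly from that formula yields $d_r(f \wedge g) = d_r f \wedge g + (-1)^{|f|} f \wedge d_r g$, so $d_r$ is an odd derivation of wedge-degree $+1$. For $\dM$, the underlying object is the Chevalley--Eilenberg differential of the Lie algebra $(\g^*,[\cdot,\cdot]_r)$ with trivial coefficients, acting on $\oplus_k \Hom(\wedge^k\g^*,\K) = \oplus_k \wedge^k\g$; this is a classical graded derivation of the exterior algebra with the same sign rule.

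Next I would check agreement on generators. Both operators vanish on $\wedge^0\g = \K$ trivially. For $x \in \g = \wedge^1\g$, pair both sides against $\xi_1 \wedge \xi_2 \in \wedge^2\g^*$, and write $r = \sum_a u_a \wedge v_a$. The Schouten formula from the excerpt gives $[r,x] = \sum_a ([u_a,x] \wedge v_a - [v_a,x] \wedge u_a)$. Simultaneously, using Eq.~(\ref{eq:bracketr}) together with $\langle \ad^*_y\xi, z\rangle = -\langle \xi, [y,z]\rangle$ and the identification $r^\sharp(\xi) = \sum_a(\langle u_a,\xi\rangle v_a - \langle v_a,\xi\rangle u_a)$, the value $\dM x(\xi_1,\xi_2) = -\langle x, [\xi_1,\xi_2]_r\rangle$ unwinds to the same four-term bilinear combination in the scalars $\langle u_a,\xi_i\rangle$, $\langle v_a,\xi_j\rangle$, $\langle [u_a,x],\xi_k\rangle$, $\langle [v_a,x],\xi_\ell\rangle$.

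Since $\oplus_k\wedge^k\g$ is generated under $\wedge$ by $\K$ and $\g$, and any two graded derivations of the same degree and sign rule that agree on a generating set coincide, the equality $\dM f = [r,f]$ extends from $\wedge^{\leq 1}\g$ to all of $\wedge^\bullet\g$. \emph{The main obstacle} is sign bookkeeping in the middle step: the Schouten-bracket sign from the excerpt, the antisymmetrization convention in $\wedge$, and the coadjoint sign must all be reconciled in a single computation. Beyond this careful accounting, the argument is purely structural and requires no further input.
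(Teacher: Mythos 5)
Your proposal is correct and is essentially the paper's own argument: the paper verifies the identity on $\g$ by the same pairing computation against $\xi_1\wedge\xi_2$ and then extends it by induction on wedge degree, the inductive step being exactly the graded Leibniz rule for $[r,\cdot]$ matched term-by-term against the expansion of $\dM(x_1\wedge P)$. You simply repackage that induction as the standard uniqueness statement for graded derivations of the same degree agreeing on the generators $\K\oplus\g$, which is a legitimate shortcut once the derivation property of the Chevalley--Eilenberg differential with trivial coefficients is taken as classical.
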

\begin{proof}
For all $f=x\in\g$ and $\xi_1\wedge\xi_2\in\wedge^2\g^*$, we have
\begin{eqnarray*}
\langle d_rx,\xi_1\wedge\xi_2\rangle&=&\langle [r,x],\xi_1\wedge\xi_2\rangle=\langle r,\ad^*_x(\xi_1\wedge\xi_2)\rangle=\langle r,\ad^*_x\xi_1\wedge\xi_2+\xi_1\wedge\ad^*_x\xi_2\rangle\\
&=&-\langle r^\sharp(\xi_2),\ad^*_x\xi_1\rangle+\langle r^\sharp(\xi_1),\ad^*_x\xi_2\rangle=\langle [x,r^\sharp(\xi_2)],\xi_1\rangle-\langle [x,r^\sharp(\xi_1)],\xi_2\rangle\\
&=&\langle x,\ad^*_{r^\sharp(\xi_2)}\xi_1\rangle-\langle x,\ad^*_{r^\sharp(\xi_1)}\xi_2\rangle=-\langle x,[\xi_1,\xi_2]_r\rangle=\langle \dM x,\xi_1\wedge\xi_2\rangle.
\end{eqnarray*}
Thus,  $d_rx=\dM x$. Arguing by induction, assume that the conclusion holds for $f=P\in\wedge^n\g$, that is,
\begin{equation}
\label{dr=d}
\sum_{1\le i<j\le n+1} 
(-1)^{i+j}\langle
P,[\xi_i,\xi_j]_r\wedge\xi_1\wedge\cdots\hat{\xi_i}\cdots\hat{\xi_j}\cdots\wedge\xi_{n+1}\rangle
=\langle [r,P],\xi_1\wedge\cdots\wedge \xi_{n+1}\rangle,  
\end{equation}
for all $\xi_1,\cdots,\xi_{n+1}\in \g^*$. Then for $f=x_1\wedge P$ and $\xi_1\wedge\cdots\wedge \xi_{n+2}\in\wedge^{n+2}\g^*$, we have
\begin{eqnarray*}
&&\langle d_r(x_1\wedge P)-\dM(x_1\wedge P) ,\xi_1\wedge\cdots\wedge \xi_{n+2}\rangle\\
&=&\langle [r,x_1]\wedge P+(-1)^{1\cdot(2-1)}x_1\wedge[r,P] ,\xi_1\wedge\cdots\wedge \xi_{n+2}\rangle\\
&&-\sum_{1\le i<j\le n+2}(-1)^{i+j}\langle x_1\wedge P,[\xi_i,\xi_j]_r\wedge\xi_1\wedge\cdots\hat{\xi_i}\cdots\hat{\xi_j}\cdots\wedge\xi_{n+2}\rangle\\
&=&\sum_{1\le i<j\le n+2}(-1)^{1+2+i+j}\langle [r,x_1],\xi_i\wedge\xi_j\rangle\langle P,\xi_2\wedge\cdots\hat{\xi_i}\cdots\hat{\xi_j}\cdots\wedge\xi_{n+2}\rangle\\
&&-\sum_{1\le i\le n+2}(-1)^{1+i}\langle x_1,\xi_i\rangle\langle[r,P] ,\xi_1\wedge\cdots\hat{\xi_i}\cdots\wedge \xi_{n+2}\rangle\\
&&-\sum_{1\le i<j\le n+2}(-1)^{i+j}\langle x_1,[\xi_i,\xi_j]_r\rangle\langle P,\xi_2\wedge\cdots\hat{\xi_i}\cdots\hat{\xi_j}\cdots\wedge\xi_{n+2}\rangle\\
&&-\sum_{1\le i<j\le n+2}(-1)^{i+j}\sum_{1\le s\le i-1}(-1)^{s}\langle x_1,\xi_s\rangle\langle P,[\xi_i,\xi_j]_r\wedge\xi_1\wedge\cdots\hat{\xi_s}\cdots\hat{\xi_i}\cdots\hat{\xi_j}\cdots\wedge\xi_{n+2}\rangle\\
&&-\sum_{1\le i<j\le n+2}(-1)^{i+j}\sum_{i+1\le s\le j-1}(-1)^{1+s}\langle x_1,\xi_s\rangle\langle P,[\xi_i,\xi_j]_r\wedge\xi_1\wedge\cdots\hat{\xi_i}\cdots\hat{\xi_s}\cdots\hat{\xi_j}\cdots\wedge\xi_{n+2}\rangle\\
&&-\sum_{1\le i<j\le n+2}(-1)^{i+j}\sum_{j+1\le s\le n+2}(-1)^{s}\langle x_1,\xi_s\rangle\langle P,[\xi_i,\xi_j]_r\wedge\xi_1\wedge\cdots\hat{\xi_i}\cdots\hat{\xi_j}\cdots\hat{\xi_s}\cdots\wedge\xi_{n+2}\rangle.
\end{eqnarray*}
By $\langle[r,x],\xi_1\wedge\xi_2\rangle=-\langle x,[\xi_1,\xi_2]_r\rangle$, the sum of the first and third terms is zero. Next, by  Eq.\eqref{dr=d}, the second term is
\begin{eqnarray*}
&&\sum_{1\le i\le n+2}(-1)^{i}\langle x_1,\xi_i\rangle\langle[r,P] ,\xi_1\wedge\cdots\hat{\xi_i}\cdots\wedge \xi_{n+2}\rangle\\
&=&\sum_{1\le i\le n+2}(-1)^{i}\sum_{1\le s<t\le i-1}(-1)^{s+t}\langle x_1,\xi_i\rangle\langle P ,[\xi_s,\xi_t]_r\wedge\xi_1\wedge\cdots\hat{\xi_s}\cdots\hat{\xi_t}\cdots\hat{\xi_i}\cdots\wedge \xi_{n+2}\rangle\\
&&+\sum_{1\le i\le n+2}(-1)^{i}\sum_{1\le s<i<t\le n+2}(-1)^{s+t-1}\langle x_1,\xi_i\rangle\langle P ,[\xi_s,\xi_t]_r\wedge\xi_1\wedge\cdots\hat{\xi_s}\cdots\hat{\xi_i}\cdots\hat{\xi_t}\cdots\wedge \xi_{n+2}\rangle\\
&&+\sum_{1\le i\le n+2}(-1)^{i}\sum_{i+1\le s<<t\le n+2}(-1)^{s-1+t-1}\langle x_1,\xi_i\rangle\langle P,[\xi_s,\xi_t]_r\wedge\xi_1\wedge\cdots\hat{\xi_i}\cdots\hat{\xi_s}\cdots\hat{\xi_t}\cdots\wedge \xi_{n+2}\rangle,
\end{eqnarray*}
which implies that $d_r(x_1\wedge P)=\dM(x_1\wedge P)$. This completes the induction.
\end{proof}

There is a close relationship between the cohomology group $H^k(\g^*)$ and the cohomology group
$\huaH^{k-1}(\g^*,\g)$ of the \oop $r^\sharp.$ Let us recall the
latter from Section 3. By Lemma \ref{lem:rep}, we have
\begin{cor}
   Let $r\in\wedge^2\g$ be a skew-symmetric $r$-matrix. Let $\g$ be a Lie algebra and $r\in\wedge^2\g$ a skew-symmetric
$r$-matrix. Then
   $$
   \mrho:\g^*\longrightarrow \gl(\g), \quad
   \mrho(\xi)(x):=[r^\sharp(\xi),x]+r^\sharp\ad_x^*\xi,\quad \forall \xi\in\g^*, x\in \g,
   $$
   is a representation of the Lie algebra $(\g^*,[\cdot,\cdot]_r)$ on the vector space $\g$.
\end{cor}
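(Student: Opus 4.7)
The plan is to obtain this corollary as a direct specialization of Lemma~\ref{lem:rep} to the setting of skew-symmetric $r$-matrices, using the fact (recalled just above the statement) that $r^\sharp:\g^*\longrightarrow\g$ is an $\huaO$-operator on $\g$ with respect to the coadjoint representation $(\g^*;\ad^*)$.

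First I would unpack the hypotheses of Lemma~\ref{lem:rep} in the present setting: take $T=r^\sharp$, $V=\g^*$, and $\rho=\ad^*$, so that $T$ is an $\huaO$-operator on $\g$ with respect to $(\g^*;\ad^*)$ by the characterization of skew-symmetric $r$-matrices as $\huaO$-operators for the coadjoint representation \cite{Ku}. The lemma then produces a map
\[
\mrho_T:\g^*\longrightarrow\gl(\g),\qquad \mrho_T(\xi)(x)=[Tu,x]+T\rho(x)(u)\Big|_{u=\xi},
\]
and substituting $T=r^\sharp$, $\rho=\ad^*$ gives exactly
\[
\mrho(\xi)(x)=[r^\sharp(\xi),x]+r^\sharp\ad_x^*\xi,
\]
which matches the formula in the statement of the corollary.

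Next, the conclusion of Lemma~\ref{lem:rep} is that $\mrho_T$ is a representation of the sub-adjacent Lie algebra $(V,[\cdot,\cdot]_T)$ on $\g$. In our case $[\cdot,\cdot]_T=[\cdot,\cdot]_{r^\sharp}$ agrees with $[\cdot,\cdot]_r$ defined by Eq.~\eqref{eq:bracketr}, since
\[
[\xi,\eta]_{r^\sharp}=\ad^*_{r^\sharp(\xi)}\eta-\ad^*_{r^\sharp(\eta)}\xi=[\xi,\eta]_r,\qquad\forall\,\xi,\eta\in\g^*.
\]
Hence $\mrho$ is a representation of $(\g^*,[\cdot,\cdot]_r)$ on $\g$, completing the proof.

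Since this is a pure specialization, there is no real obstacle: the only thing to check is the identification of notation (that $T=r^\sharp$ and $\rho=\ad^*$ realize $r$ as an $\huaO$-operator, already established) and that the sub-adjacent bracket $[\cdot,\cdot]_{r^\sharp}$ coincides with the Lie bracket $[\cdot,\cdot]_r$ on $\g^*$ induced by the $r$-matrix, which is immediate from the two definitions.
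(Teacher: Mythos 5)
Your proposal is correct and matches the paper exactly: the paper also obtains this corollary by specializing Lemma~\ref{lem:rep} to $T=r^\sharp$, $V=\g^*$, $\rho=\ad^*$, using that $r^\sharp$ is an $\huaO$-operator for the coadjoint representation and that $[\cdot,\cdot]_{r^\sharp}=[\cdot,\cdot]_r$.
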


\begin{rmk}
  The representation $\mrho$ given above is exactly the coadjoint representation of the Lie algebra $(\g^*,[\cdot,\cdot]_r)$ on the vector space $\g$. More precisely, let $\add:\g^*\longrightarrow\gl(\g^*)$ be the adjoint representation of the Lie algebra $(\g^*,[\cdot,\cdot]_r)$, then $\mrho=\add^*$. It follows from
  \begin{eqnarray*}
    \langle\add_\xi^*x,\eta\rangle&=&-\langle x,[\xi,\eta]_r\rangle=-\langle x,\ad^*_{r^\sharp\xi}\eta-\ad^*_{r^\sharp\eta}\xi\rangle\\
    &=&\langle [r^\sharp\xi,x],\eta\rangle+\langle [x,r^\sharp\eta],\xi\rangle=\langle [r^\sharp\xi,x],\eta\rangle+\langle r^\sharp\ad_x^*\xi,\eta\rangle\\
    &=&\langle\mrho(\xi)(x),\eta\rangle,\quad\forall \xi,\eta\in\g^*, x\in\g.
  \end{eqnarray*}
\end{rmk}
The Chevalley-Eilenberg coboundary operator $d_\mrho: \Hom(\wedge^k\g^*,\g)\longrightarrow  \Hom(\wedge^{k+1}\g^*,\g)$ of the representation $\mrho$ is given by
              \begin{eqnarray*}
                && d_\mrho f(\xi_1,\cdots,\xi_{k+1})\\
                &:=&\sum_{i=1}^{k+1}(-1)^{i+1}[r^\sharp\xi_i,f(\xi_1,\cdots,\hat{\xi_i},\cdots, \xi_{k+1})]+
                \sum_{i=1}^{k+1}(-1)^{i+1}r^\sharp\ad^*_{f(\xi_1,\cdots,\hat{\xi_i},\cdots, \xi_{k+1})}\xi_i\\&&+\sum_{1\le i<j\le k+1}(-1)^{i+j}f([\xi_i,\xi_j]_r,\xi_1,\cdots,\hat{\xi_i},\cdots,\hat{\xi_j},\cdots,
                \xi_{k+1}), \forall f\in
                \Hom(\wedge^k\g^*,\g),\xi_1,\cdots,\xi_{k+1}\in
                \g^*.
               \end{eqnarray*}

 \begin{thm} With the notations as above, the map $\Psi$ defined by Eq.~\eqref{eq:defipsi} is a cochain map between cochain complexes $(\oplus_k\wedge^k\g,\dM)$ and $(\oplus_k\Hom(\wedge^k\g^*,\g),d_\mrho)$.
    Consequently, $\Psi$ induces a map $\Psi_*$ between the corresponding cohomology groups.
 \end{thm}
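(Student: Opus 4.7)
The plan is to derive the cochain map identity $d_\mrho \circ \Psi = \Psi \circ \dM$ by composing three results already established in the excerpt, and then invoke the standard fact that any cochain map descends to cohomology. The key observation is that $\Psi$ shifts degree by $-1$ (sending $\wedge^{p+1}\g$ to $\Hom(\wedge^{p}\g^{*},\g)$), so the two sign factors that appear in the intermediate identities are indexed by the same integer $p$ and will cancel.

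First I would fix $P\in\wedge^{p+1}\g$ so that $\Psi(P)\in\Hom(\wedge^{p}\g^{*},\g)$. By Proposition~\ref{pro:danddr}, the coboundary on the left-hand complex reduces to the internal bracket: $\dM P=d_{r}P=[r,P]$. Then by the morphism property \eqref{eq:antimor1} in Proposition~\ref{pro:dglamap1},
\begin{equation*}
\Psi(\dM P)\;=\;\Psi(d_{r}P)\;=\;(-1)^{p}\,d_{r^{\sharp}}\Psi(P).
\end{equation*}

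Next I would invoke Proposition~\ref{pro:danddT} applied to the $\huaO$-operator $T=r^{\sharp}$ on $\g$ with respect to the coadjoint representation on $V=\g^{*}$. This gives $d_\mrho f=(-1)^{k}d_{T}f$ for any $f\in\Hom(\wedge^{k}\g^{*},\g)$. Specializing to $f=\Psi(P)$ with $k=p$, the two factors of $(-1)^{p}$ cancel:
\begin{equation*}
\Psi(\dM P)\;=\;(-1)^{p}d_{r^{\sharp}}\Psi(P)\;=\;(-1)^{p}\cdot(-1)^{p}d_\mrho\Psi(P)\;=\;d_\mrho\Psi(P).
\end{equation*}
This is exactly the cochain map property. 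Finally, since $\Psi$ is a cochain map, it sends cocycles to cocycles and coboundaries to coboundaries, so it descends to a well-defined homomorphism $\Psi_{*}\colon H^{k+1}(\g^{*})\to\huaH^{k}(\g^{*},\g)$ between the corresponding cohomology groups.

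There is no real obstacle here beyond careful bookkeeping of degrees and signs; the content of the theorem is essentially that the three previously established identifications $\dM=d_{r}$, $\Psi\,d_{r}=(-1)^{p}d_{r^{\sharp}}\Psi$, and $d_\mrho=(-1)^{k}d_{r^{\sharp}}$ assemble into a single coherent statement. The only subtle point worth highlighting in the write-up is that one must use the degree shift introduced by $\Psi$ to match the $p$ appearing in the anti-morphism identity of Proposition~\ref{pro:dglamap1} with the $k$ appearing in Proposition~\ref{pro:danddT}; this alignment is precisely what forces the two signs to cancel rather than reinforce.
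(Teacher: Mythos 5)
Your proposal is correct and follows essentially the same route as the paper's own proof: both combine Proposition~\ref{pro:danddr} ($\dM=d_r$), the anti-morphism identity \eqref{eq:antimor1} of Proposition~\ref{pro:dglamap1}, and Proposition~\ref{pro:danddT} (with $T=r^\sharp$ and $V=\g^*$), so that the two factors of $(-1)^p$ cancel. Your write-up merely makes the degree-shift and sign bookkeeping more explicit than the paper's one-line computation.
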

 \begin{proof}
   By Propositions \ref{pro:dglamap1}, \ref{pro:danddr} and \ref{pro:danddT}, for all $P\in  \wedge^{k+1}\g$, we have
   \begin{eqnarray*}
     \Psi (\dM P)=\Psi([r,p])=(-1)^k[r^\sharp,\Psi(P)]=d_\mrho (\Psi(P)),
   \end{eqnarray*}
as needed.
\end{proof}

 \begin{cor}
Let $\g$ be a Lie algebra and $r\in\wedge^2\g$ a skew-symmetric $r$-matrix.
Then for the corresponding $\mathcal O$-operator $r^\sharp$
associated to the coadjoint representation, we have
$$d_\mrho(x)=[r,x]^\sharp,\;\;\forall x\in \g.$$
\end{cor}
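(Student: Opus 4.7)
The plan is to read off this identity as an immediate consequence of the preceding theorem (that $\Psi$ is a cochain map) combined with Proposition~\ref{pro:danddr} (identifying $\dM$ with $d_r=[r,\cdot]$). There is nothing substantive left to do; the whole point is that the three facts just established fit together to give the claim in one line, so I would spend most of the write-up simply making sure the degree bookkeeping is unambiguous.

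First I would recall the two pieces of input. On the one hand, by definition of $\Psi$ in Eq.~\eqref{eq:defipsi}, $\Psi$ acts as the identity on $\wedge^{1}\g=\g$ and sends $\wedge^{2}\g$ to $\Hom(\g^*,\g)$ by the sharp map $P\mapsto P^\sharp$. On the other hand, Proposition~\ref{pro:danddr} says that on $\wedge^{k}\g$ the coboundary $\dM$ of the Chevalley--Eilenberg complex of $(\g^*,[\cdot,\cdot]_r)$ with trivial coefficients coincides with $d_r=[r,\cdot]$; in particular, for $x\in\g=\wedge^{1}\g$ we get $\dM x=[r,x]\in\wedge^{2}\g$.

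Then I would invoke the cochain map identity $\Psi\circ\dM=d_\mrho\circ\Psi$ proved just above, evaluated on $x\in\g$. Since $\Psi(x)=x$, this becomes
\[
d_\mrho(x)\;=\;d_\mrho(\Psi(x))\;=\;\Psi(\dM x)\;=\;\Psi([r,x])\;=\;[r,x]^\sharp,
\]
which is exactly the asserted formula. (Note that the possible sign $(-1)^{k}$ appearing in Eq.~\eqref{eq:antimor1} is $+1$ here because $k=0$, so no sign intervenes.)

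The only point worth double-checking is the degree convention: $x\in\g$ has degree $0$ in the shifted grading, so $[r,x]$ lies in $\wedge^{2}\g$, and both sides are genuinely elements of $\Hom(\g^*,\g)\cong\wedge^{2}\g$. Once that is confirmed, the corollary is immediate and the ``main obstacle'' is really just an exercise in matching conventions rather than any nontrivial calculation.
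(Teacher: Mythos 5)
Your proposal is correct and is exactly the route the paper intends: the corollary is an immediate consequence of the preceding theorem ($\Psi\circ\dM=d_\mrho\circ\Psi$) together with Proposition~\ref{pro:danddr} ($\dM x=[r,x]$) and the facts $\Psi(x)=x$, $\Psi([r,x])=[r,x]^\sharp$. Your sign check ($p=0$ for $x\in\g=\wedge^{0+1}\g$ in Eq.~\eqref{eq:antimor1}) is also the right detail to verify, and it works out as you say.
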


\subsection{Weak homomorphisms between skew-symmetric $r$-matrices and Lie bialgebras}
We now apply the connection between deformations of \oops and
those of skew-symmetric $r$-matrices to study weak
homomorphisms between skew-symmetric $r$-matrices.

\begin{defi}\label{defi:iso}
Let $\g$ be a Lie algebra and $r_1,\;r_2$  two skew-symmetric
$r$-matrices. A {\bf weak homomorphism} from   $r_2$ to $r_1$ consists of a Lie algebra homomorphism $\phi:\g\rightarrow\g$ and
a linear map $\varphi:\g\rightarrow \g$ satisfying
\begin{eqnarray}(\varphi\otimes {\rm Id}_\g)(r_1)&=&({\rm
Id}_g\otimes \phi)(r_2);\label{eq:req1}\\
\varphi[\phi(x),y]&=&[x,\varphi(y)],\;\;\forall x,y\in
\g.\label{eq:req2} \end{eqnarray}
If $\phi$ and
$\varphi$ are also linear isomorphisms, then $(\phi,\varphi)$ is called
a {\bf weak isomorphism} from $r_2$ to $r_1$.
\end{defi}

\begin{pro} Let $\g$ be a Lie algebra and $r_1,\;r_2$  two skew-symmetric
$r$-matrices. Then $(\phi,
\varphi)$ is a weak homomorphism (weak isomorphism) from $r_2$  to $r_1$ if
and only if  $(\phi,\varphi^*)$ is a homomorphism (isomorphism) from $r_2^\sharp$ to
$r_1^\sharp$ as $\mathcal O$-operators  on $\g$ with respect to the
coadjoint representation.
\end{pro}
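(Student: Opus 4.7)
The plan is to show that, after unpacking Definition~\ref{defi:isoO} in the case where the representation $(V;\rho)$ is the coadjoint representation $(\g^*;\ad^*)$ with $T' = r_2^\sharp$, $T = r_1^\sharp$, $\phi_\g = \phi$ and $\phi_V = \varphi^*$, the two conditions
\[
(a)\ r_1^\sharp \circ \varphi^* = \phi \circ r_2^\sharp, \qquad (b)\ \varphi^*\circ \ad^*_x = \ad^*_{\phi(x)}\circ \varphi^* \text{ for all } x\in\g,
\]
are equivalent respectively to \eqref{eq:req1} and \eqref{eq:req2} in Definition~\ref{defi:iso}. Once these two equivalences are established, the statements for weak homomorphisms and weak isomorphisms follow immediately, since $\varphi$ is invertible if and only if $\varphi^*$ is.

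For $(a)\Leftrightarrow\eqref{eq:req1}$, I would pair both sides of $(a)$ with an arbitrary $\eta\in\g^*$. Using the definition of $r^\sharp$ and the skew-symmetry of $r$, the left-hand side computes as
\[
\langle r_1^\sharp(\varphi^*(\xi)),\eta\rangle = r_1(\varphi^*(\xi),\eta) = \langle (\varphi\otimes\Id_\g)(r_1),\xi\otimes\eta\rangle,
\]
while the right-hand side gives
\[
\langle \phi(r_2^\sharp(\xi)),\eta\rangle = \langle r_2^\sharp(\xi),\phi^*(\eta)\rangle = r_2(\xi,\phi^*(\eta)) = \langle (\Id_\g\otimes\phi)(r_2),\xi\otimes\eta\rangle.
\]
Since this must hold for all $\xi,\eta\in\g^*$, $(a)$ is equivalent to $(\varphi\otimes\Id_\g)(r_1)=(\Id_\g\otimes\phi)(r_2)$, which is precisely \eqref{eq:req1}.

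For $(b)\Leftrightarrow\eqref{eq:req2}$, I would evaluate both sides of $(b)$ on an arbitrary $y\in\g$. The two pairings read
\[
\langle \varphi^*\ad^*_x(\xi),y\rangle = \langle \ad^*_x(\xi),\varphi(y)\rangle = -\langle \xi,[x,\varphi(y)]\rangle,
\]
and
\[
\langle \ad^*_{\phi(x)}(\varphi^*(\xi)),y\rangle = -\langle \varphi^*(\xi),[\phi(x),y]\rangle = -\langle \xi,\varphi[\phi(x),y]\rangle.
\]
As $\xi\in\g^*$ is arbitrary, the non-degeneracy of the pairing makes $(b)$ equivalent to $\varphi[\phi(x),y]=[x,\varphi(y)]$ for all $x,y\in\g$, which is \eqref{eq:req2}.

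There is no substantive obstacle here: the proof is a pure dualization, so the only thing one must be careful about is tracking which tensor slot is transformed by which map and remembering the sign convention in $\ad^*$. Combining the two equivalences gives the weak homomorphism case; specializing to the case where $\phi$ and $\varphi$ (equivalently $\varphi^*$) are bijective yields the weak isomorphism case.
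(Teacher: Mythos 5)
Your proof is correct and follows essentially the same route as the paper: both arguments reduce the claim to checking that the two defining conditions of an $\huaO$-operator homomorphism, specialized to the coadjoint representation, dualize exactly to Eqs.~\eqref{eq:req1} and~\eqref{eq:req2}. The only cosmetic difference is that the paper verifies the first equivalence by writing $r_1,r_2$ as explicit sums of simple tensors, while you work directly with the pairing $\langle r^\sharp(\xi),\eta\rangle=\langle r,\xi\otimes\eta\rangle$, and you write out the second equivalence that the paper leaves as ``straightforward.''
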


\begin{proof}
Let $r_1=\sum_ja_i\otimes b_i$ and
$r_2=\sum_jx_j\otimes y_j$. Then
$$r_1^\sharp(\xi)=\sum_i\langle \xi,a_i\rangle
b_i,\;\;r_2^\sharp(\xi)=\sum_j\langle \xi,x_j\rangle
y_j,\;\;\forall \xi\in \g^*. $$  Hence for any $\xi,\eta\in\g^*$,
we have
\begin{eqnarray*}
\langle (\varphi\otimes {\rm Id}_\g)(r_1),\xi\otimes \eta\rangle
&=&\sum_i\langle \varphi (a_i),\xi\rangle\langle
b_i,\eta\rangle=\sum_i\langle a_i,\varphi^*(\xi)\rangle\langle
b_i,\eta\rangle= \langle r_1^\sharp\circ
\varphi^*(\xi),\eta\rangle,\\
\langle ({\rm Id}_\g)\otimes \phi) (r_2),\xi\otimes \eta\rangle
&=&\sum_j\langle x_j,\xi\rangle\langle
\phi(y_i),\eta\rangle=\langle \phi(\sum_j\langle
x_j,\xi\rangle y_i),\eta\rangle= \langle\phi\circ
r_2^\sharp(\xi),\eta\rangle.
\end{eqnarray*}
Therefore $\phi\circ r_2^\sharp=r_1^\sharp\circ \varphi^*$ holds if and only if
Eq.~(\ref{eq:req1}) holds.

It is straightforward to check that Eq.~(\ref{eq:req2}) holds if
and only if $\varphi^*{\rm ad}^*_x\xi={\rm
ad}^*_{\phi(x)}\varphi^*(\xi)$ for all $x\in \g, \xi\in \g^*.$
Hence the conclusion holds.
\end{proof}

Recall from~\cite{CP} that two skew-symmetric $r$-matrices $r_1$
and $r_2$ are said to be {\bf equivalent} if there is a Lie
algebra isomorphism $\phi:\g\longrightarrow\g$ such that
  \begin{equation}
    (\phi\otimes \phi) (r_2)=r_1.
  \mlabel{eq:defi-eqr1}
  \end{equation}
There one can also find the notion of an equivalence of $r$-matrices up to a scalar.

\begin{cor}\label{co:r-eq} Let $\g$ be a Lie algebra and $r_1,\;r_2$
skew-symmetric $r$-matrices. Then $r_1$ is equivalent to $r_2$
if and only if there exists a Lie algebra isomorphism
$\phi:\g\longrightarrow\g$ such that $(\phi,{\phi^{-1}})$  is a weak
isomorphism from $r_2$ to $r_1$.
\end{cor}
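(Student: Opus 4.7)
The plan is to verify both directions by direct computation, using the definition of weak isomorphism from Definition~\ref{defi:iso} together with the fact that $\phi$ is a Lie algebra isomorphism. Since $\varphi = \phi^{-1}$ is forced, the only two conditions to check are Eqs.~\eqref{eq:req1} and~\eqref{eq:req2}.

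For the forward implication, I would assume the equivalence $(\phi\otimes\phi)(r_2)=r_1$ and apply $\phi^{-1}\otimes \mathrm{Id}_\g$ to both sides. Since $(\phi^{-1}\otimes\mathrm{Id}_\g)\circ(\phi\otimes\phi)=\mathrm{Id}_\g\otimes\phi$, this immediately yields $(\phi^{-1}\otimes\mathrm{Id}_\g)(r_1)=(\mathrm{Id}_\g\otimes\phi)(r_2)$, which is exactly Eq.~\eqref{eq:req1} with $\varphi=\phi^{-1}$. For Eq.~\eqref{eq:req2}, since $\phi$ is a Lie algebra homomorphism, for all $x,y\in\g$ we have
\[
[\phi(x),y]=[\phi(x),\phi(\phi^{-1}(y))]=\phi([x,\phi^{-1}(y)]),
\]
so applying $\phi^{-1}$ gives $\phi^{-1}[\phi(x),y]=[x,\phi^{-1}(y)]$. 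Together with the fact that $\phi^{-1}$ is a linear isomorphism, this shows $(\phi,\phi^{-1})$ is a weak isomorphism from $r_2$ to $r_1$.

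For the converse, I would start from a weak isomorphism $(\phi,\phi^{-1})$ from $r_2$ to $r_1$. Equation~\eqref{eq:req1} reads $(\phi^{-1}\otimes\mathrm{Id}_\g)(r_1)=(\mathrm{Id}_\g\otimes\phi)(r_2)$. Applying $\phi\otimes\mathrm{Id}_\g$ to both sides yields $r_1=(\phi\otimes\phi)(r_2)$, which is exactly Eq.~\eqref{eq:defi-eqr1}. Hence $r_1$ and $r_2$ are equivalent in the sense of~\cite{CP}.

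Essentially no obstacle arises; the result is a straightforward translation between the two notions of equivalence, and the only ingredient beyond tensor bookkeeping is the Lie algebra homomorphism property of $\phi$ used to verify Eq.~\eqref{eq:req2}. Note that Eq.~\eqref{eq:req2} in the forward direction holds automatically for \emph{any} Lie algebra automorphism $\phi$ with $\varphi=\phi^{-1}$, so the entire content of the corollary is encoded in the tensor equation~\eqref{eq:req1}.
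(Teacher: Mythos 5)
Your proof is correct and follows essentially the same route as the paper's: the paper likewise checks Eqs.~\eqref{eq:req1}--\eqref{eq:req2} directly for $(\phi,\phi^{-1})$ in the forward direction (leaving it as "straightforward") and obtains the converse by observing that Eq.~\eqref{eq:req1} alone forces $(\phi\otimes\phi)(r_2)=r_1$. Your added remark that Eq.~\eqref{eq:req2} is automatic for any Lie algebra automorphism paired with its inverse is accurate and consistent with the paper's implicit reasoning.
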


\begin{proof}
If $\phi:\g\longrightarrow\g$ is an equivalence from $r_2$ to
$r_1$, then it is straightforward to check that
$(\phi,{\phi^{-1}})$  satisfies Eqs.~(\ref{eq:req1}) and
(\ref{eq:req2}). Conversely, Eqs.~(\ref{eq:req1}) implies
$(\phi\otimes \phi) (r_2)=r_1$.
\end{proof}

\begin{rmk} Therefore the notion of equivalence of two skew-symmetric
$r$-matrices $r_1$ and $r_2$ given in \cite{CP}  is not the
same as the notion of weak isomorphism between $r_1$ and $r_2$ in the
sense of  Definition~\ref{defi:iso} which is induced from the
notion of isomorphism between the corresponding $\mathcal
O$-operators with respect to the coadjoint representation. In
fact, in general, two weak isomorphic skew-symmetric
$r$-matrices in the sense of Definition~\ref{defi:iso} might not
be equivalent.
\end{rmk}

\begin{defi} Let
$(\g,[\cdot,\cdot],\delta_1)$ and $(\g,[\cdot,\cdot],\delta_2)$ be two Lie bialgebras,
$(\g^*,[\;,\;]^*_1)$ and $(\g^*,[\;,\;]^*_2)$  the corresponding
Lie algebra structures on $\g^*$ respectively.  A {\bf weak homomorphism}
from $(\g,[\cdot,\cdot],\delta_2)$ to $(\g,[\cdot,\cdot],\delta_1)$ consists of a Lie algebra homomorphism $\phi:\g\rightarrow \g$
and a linear map $\varphi:\g\rightarrow \g $ such that
$\varphi^*:(\g^*,[\cdot,\cdot]^*_2)\rightarrow (\g^*,[\cdot,\cdot]_1^*)$ is
a Lie algebra homomorphism (that is, $\varphi$ is a Lie
coalgebra homomorphism) and
\begin{equation}\varphi[\phi(x),y]=[x,\varphi(y)],\;\;\forall x,y\in
\g.\label{eq:req3} \end{equation} If in addition, both $\phi$ and
$\varphi$ are linear isomorphisms, then $(\phi,\varphi)$ is called
a {\bf weak isomorphism} from $(\g,[\cdot,\cdot],\delta_2)$ to $(\g,[\cdot,\cdot],\delta_1)$.
\end{defi}

\begin{rmk}
Note that the above notions of weak homomorphisms and weak
isomorphisms  are only available for the two Lie bialgebras with
the same Lie algebra $\g$, not for arbitrary two Lie bialgebras.
See also Remark~\ref{rmk:co}.
\end{rmk}

Straightforward from the definitions, we have
\begin{pro}\label{pro:Leq}
  Let $(\g,[\cdot,\cdot],\delta_1)$ and $(\g,[\cdot,\cdot],\delta_2)$
be two Lie bialgebras. Then
$(\g,[\cdot,\cdot],\delta_1)$ is isomorphic to
$(\g,[\cdot,\cdot],\delta_2)$ if and only if there exists a Lie
algebra isomorphism $\phi:\g\rightarrow \g$ such that
$(\phi,\phi^{-1})$ is a weak isomorphism from
$(\g,[\cdot,\cdot],\delta_2)$ to $(\g,[\cdot,\cdot],\delta_1)$.
\end{pro}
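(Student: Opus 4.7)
The plan is to unpack both notions and match them directly via the definitions; no cohomological machinery is required, since everything reduces to a bookkeeping check that the two listed conditions of a weak isomorphism become automatic for the pair $(\phi,\phi^{-1})$ once $\phi$ is a Lie bialgebra isomorphism, and conversely.

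For the forward implication I will assume $\phi:\g\rightarrow\g$ is a Lie bialgebra isomorphism from $(\g,[\cdot,\cdot],\delta_2)$ to $(\g,[\cdot,\cdot],\delta_1)$ and verify the three clauses in the definition of weak isomorphism for $(\phi,\phi^{-1})$. That $\phi$ is a Lie algebra homomorphism is part of the hypothesis. By the definition of Lie bialgebra isomorphism recalled in the excerpt, the dual $\phi^*:(\g^*,[\cdot,\cdot]^*_1)\rightarrow(\g^*,[\cdot,\cdot]^*_2)$ is a Lie algebra isomorphism; inverting gives $(\phi^{-1})^*=(\phi^*)^{-1}:(\g^*,[\cdot,\cdot]^*_2)\rightarrow(\g^*,[\cdot,\cdot]^*_1)$ as a Lie algebra homomorphism, which is the coalgebra clause. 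For Eq.~(\ref{eq:req3}), I substitute $y=\phi(\phi^{-1}(y))$ and use that $\phi$ preserves $[\cdot,\cdot]$:
\[
\phi^{-1}[\phi(x),y]=\phi^{-1}[\phi(x),\phi(\phi^{-1}(y))]=\phi^{-1}\phi[x,\phi^{-1}(y)]=[x,\phi^{-1}(y)],
\]
as required.

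For the reverse implication, suppose $(\phi,\phi^{-1})$ is a weak isomorphism from $(\g,[\cdot,\cdot],\delta_2)$ to $(\g,[\cdot,\cdot],\delta_1)$. Then $\phi$ is a bijective Lie algebra homomorphism, hence a Lie algebra isomorphism. The coalgebra clause says $(\phi^{-1})^*:(\g^*,[\cdot,\cdot]^*_2)\rightarrow(\g^*,[\cdot,\cdot]^*_1)$ is a Lie algebra homomorphism; taking inverses yields $\phi^*:(\g^*,[\cdot,\cdot]^*_1)\rightarrow(\g^*,[\cdot,\cdot]^*_2)$ as a Lie algebra isomorphism, which is precisely the dual formulation of $(\phi\otimes\phi)\circ\delta_2=\delta_1\circ\phi$. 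Therefore $\phi$ is a Lie bialgebra isomorphism from $(\g,[\cdot,\cdot],\delta_2)$ to $(\g,[\cdot,\cdot],\delta_1)$.

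There is no real obstacle: the only items to keep track of are that Eq.~(\ref{eq:req3}) carries no extra content once $\varphi$ is specialized to $\phi^{-1}$ (it follows purely from $\phi$ being a Lie algebra homomorphism), and that the direction of the dualized maps is consistent, i.e.\ that $\phi^*$ and $(\phi^{-1})^*$ are mutually inverse as isomorphisms between the two prescribed Lie algebra structures on $\g^*$. Once these two points are verified, the equivalence is immediate from matching the definitions side by side.
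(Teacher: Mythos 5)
Your proof is correct and is exactly the definition-unpacking argument the paper has in mind: the paper states this proposition with the remark ``Straightforward from the definitions'' and omits the verification entirely. Both directions check out, including the observation that Eq.~(\ref{eq:req3}) for $(\phi,\phi^{-1})$ is automatic from $\phi$ being a Lie algebra homomorphism and that $(\phi^{-1})^*=(\phi^*)^{-1}$ carries the coalgebra condition back and forth between the two formulations.
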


\begin{pro}\label{pro:r-L}
Let $\g$ be a Lie algebra and $r_1,\;r_2$ skew-symmetric
$r$-matrices. Let $(\g,[\cdot,\cdot],\delta_1)$   and $(\g,[\cdot,\cdot],\delta_2)$ be the
induced triangular Lie bialgebras respectively, that is,
$$
  \delta_1(x)=[x,r_1],\quad \delta_2(x)=[x,r_2],\quad \forall x\in\g.
  $$
If $(\phi,\varphi)$  is a weak homomorphism (weak isomorphism) from $r_2$  to $r_1$, or equivalently, if $(\phi,\varphi)$  is  a homomorphism (isomorphism) from the \oop  $r_2^\sharp$   to $r_1^\sharp$, then $(\phi,\varphi)$ is a weak homomorphism (weak isomorphism) from the Lie bialgebra $(\g,[\cdot,\cdot],\delta_2)$ to $(\g,[\cdot,\cdot],\delta_1)$.
\end{pro}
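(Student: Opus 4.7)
The plan is to verify directly the three defining conditions of a weak homomorphism between the Lie bialgebras $(\g,[\cdot,\cdot],\delta_2)$ and $(\g,[\cdot,\cdot],\delta_1)$. Condition (i), that $\phi:\g\to\g$ is a Lie algebra homomorphism, is already part of the hypothesis. Condition (iii), namely Eq.~\eqref{eq:req3}, is literally identical to Eq.~\eqref{eq:req2}, so nothing needs to be checked. All the content is thus in condition (ii): showing that $\varphi^*$ is a Lie algebra homomorphism from $(\g^*,[\cdot,\cdot]_{r_2})$ to $(\g^*,[\cdot,\cdot]_{r_1})$, where $[\cdot,\cdot]_{r_i}$ is the bracket induced by $\delta_i$. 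Since $\delta_i(x)=[x,r_i]$, the associated bracket on $\g^*$ is exactly the bracket $[\cdot,\cdot]_{r_i}$ of Eq.~\eqref{eq:bracketr}.

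The key is to invoke the equivalent reformulation of a weak homomorphism of $r$-matrices as an $\huaO$-operator homomorphism with respect to the coadjoint representation, established in the proposition preceding Corollary~\ref{co:r-eq}: $(\phi,\varphi)$ is a weak homomorphism from $r_2$ to $r_1$ if and only if $(\phi,\varphi^*)$ is a homomorphism from $r_2^\sharp$ to $r_1^\sharp$. Unpacking Definition~\ref{defi:isoO} with $T=r_1^\sharp$, $T'=r_2^\sharp$, $\phi_\g=\phi$, $\phi_V=\varphi^*$, and $\rho=\ad^*$, the two defining relations~\eqref{defi:isocon1}--\eqref{defi:isocon2} become
\begin{equation*}
  r_1^\sharp\circ\varphi^*=\phi\circ r_2^\sharp,\qquad \varphi^*\circ\ad_x^*=\ad_{\phi(x)}^*\circ\varphi^*,\quad\forall x\in\g.
\end{equation*}

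With these two identities in hand, the required bracket intertwining is a one-line computation. For any $\xi,\eta\in\g^*$,
\begin{align*}
  \varphi^*([\xi,\eta]_{r_2})
  &=\varphi^*\bigl(\ad^*_{r_2^\sharp(\xi)}\eta-\ad^*_{r_2^\sharp(\eta)}\xi\bigr)\\
  &=\ad^*_{\phi(r_2^\sharp(\xi))}\varphi^*(\eta)-\ad^*_{\phi(r_2^\sharp(\eta))}\varphi^*(\xi)\\
  &=\ad^*_{r_1^\sharp(\varphi^*(\xi))}\varphi^*(\eta)-\ad^*_{r_1^\sharp(\varphi^*(\eta))}\varphi^*(\xi)
  =[\varphi^*(\xi),\varphi^*(\eta)]_{r_1},
\end{align*}
where the second equality uses the intertwining of $\varphi^*$ with the coadjoint action and the third uses $\phi\circ r_2^\sharp=r_1^\sharp\circ\varphi^*$. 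This establishes that $\varphi^*$ is a Lie algebra homomorphism, verifying condition (ii) and hence producing the desired weak homomorphism of Lie bialgebras. In the weak isomorphism case, invertibility of $\phi$ and $\varphi$ yields invertibility of $\varphi^*$, upgrading the conclusion to a weak isomorphism.

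There is no serious obstacle: once the dictionary between weak homomorphisms of $r$-matrices and \oop homomorphisms is invoked, the proof reduces to a short symbolic manipulation, and the statement amounts essentially to re-reading the two conditions of Definition~\ref{defi:isoO} through the formula~\eqref{eq:bracketr}.
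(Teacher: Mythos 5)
Your proposal is correct and follows essentially the same route as the paper: the paper likewise reduces everything to showing that $\varphi^*$ intertwines the brackets $[\cdot,\cdot]_{r_2}$ and $[\cdot,\cdot]_{r_1}$, using exactly the two identities $\phi\circ r_2^\sharp=r_1^\sharp\circ\varphi^*$ and $\varphi^*\circ\ad_x^*=\ad_{\phi(x)}^*\circ\varphi^*$ (the latter being the operator form of Eq.~\eqref{eq:req2}), and it closes with the same observation that Eq.~\eqref{eq:req3} coincides with Eq.~\eqref{eq:req2}. The only cosmetic difference is that the paper carries out the computation by pairing against $x\in\g$ rather than working directly with operators on $\g^*$.
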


\begin{proof}
We only need to prove the case of homomorphisms. Let
$(\phi,\varphi)$ be a weak homomorphism from $r_2$ to $r_1$ as
skew-symmetric $r$-matrices.
 Let $\xi,\eta\in
\g^*, x\in \g$. Then we have
\begin{eqnarray*}
\langle \varphi^*[\xi,\eta]_{r_2},x\rangle&=& \langle
-[r_2^\sharp(\xi),\varphi(x)],\eta\rangle+\langle
[r_2^\sharp(\eta),\varphi(x)],\xi\rangle\\
&=& -\langle \varphi[\phi r_2^\sharp(\xi),x], \eta\rangle+
\langle \varphi[\phi r_2^\sharp(\eta),x], \xi\rangle\\
&=&-\langle \varphi[r_1^\sharp \varphi^*(\xi),x],\eta\rangle
+\langle \varphi[r_1^\sharp \varphi^*(\eta),x],\xi\rangle\\
&=& -\langle [r_1^\sharp
\varphi^*(\xi),x],\varphi^*(\eta)\rangle +\langle [r_1^\sharp
\varphi^*(\eta),x],\varphi^*(\xi)\rangle\\
 &=&\langle[\varphi^*(\xi),\varphi^*(\eta)]_{r_1},x\rangle.
\end{eqnarray*}
Hence $\varphi^*$ is a Lie algebra homomorphism from
$(\g^*,[\cdot,\cdot]_{r_2})$ to $(\g^*,[\cdot,\cdot]_{r_1})$. Note that
Eq.~(\ref{eq:req3}) is exactly Eq.~(\ref{eq:req2}). So the
conclusion holds.
\end{proof}

Combining Proposition~\ref{pro:r-L}, Proposition~\ref{pro:Leq} ad
Corollary~\ref{co:r-eq}, we obtain

\begin{cor} Let $\g$ be a Lie algebra and $r_1$, $r_2$ skew-symmetric $r$-matrices. Let $(\g,[\cdot,\cdot],\delta_1)$   and $(\g,[\cdot,\cdot],\delta_2)$ be the induced triangular Lie bialgebras. If $r_1$ is equivalent to $r_2$, then
$(\g,\delta_1)$ is isomorphic to $(\g,\delta_2)$ as Lie
bialgebras.
\end{cor}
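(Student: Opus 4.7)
The plan is to chain together the three preceding results in a direct way. First, by Corollary \ref{co:r-eq}, the equivalence of $r_1$ and $r_2$ produces a Lie algebra isomorphism $\phi:\g\to\g$ such that $(\phi,\phi^{-1})$ is a weak isomorphism from $r_2$ to $r_1$ in the sense of Definition \ref{defi:iso}. This step is purely an unpacking of definitions: the equivalence relation $(\phi\otimes\phi)(r_2)=r_1$ is exactly what is needed to guarantee Eq.~\eqref{eq:req1} for the pair $(\phi,\phi^{-1})$, and Eq.~\eqref{eq:req2} follows automatically from $\phi$ being a Lie algebra homomorphism.

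Second, I would feed this weak isomorphism into Proposition \ref{pro:r-L} to deduce that the same pair $(\phi,\phi^{-1})$ is a weak isomorphism from the triangular Lie bialgebra $(\g,[\cdot,\cdot],\delta_2)$ to $(\g,[\cdot,\cdot],\delta_1)$. Here $\delta_i$ is constructed from $r_i$ by Eq.~\eqref{eq:delta}, so the hypothesis of Proposition~\ref{pro:r-L} is met.

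Finally, I would apply Proposition \ref{pro:Leq} in the direction stating that if $(\phi,\phi^{-1})$ is a weak isomorphism of Lie bialgebras with $\phi$ a Lie algebra isomorphism, then the two Lie bialgebras are isomorphic; this yields the desired Lie bialgebra isomorphism between $(\g,\delta_1)$ and $(\g,\delta_2)$.

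Since each of the three invoked results has already been established in the paper, no serious obstacle remains; the proof is essentially a one-line composition of citations. The only mild subtlety to watch is direction: Corollary~\ref{co:r-eq} and Proposition~\ref{pro:r-L} both go from $r_2$ to $r_1$, while Proposition~\ref{pro:Leq} is stated symmetrically, so I simply need to record the arrow consistently and note that replacing $\phi$ by $\phi^{-1}$ provides the isomorphism in the other direction if needed.
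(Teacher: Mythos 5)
Your proposal is correct and is exactly the paper's argument: the paper derives this corollary by combining Corollary~\ref{co:r-eq}, Proposition~\ref{pro:r-L} and Proposition~\ref{pro:Leq} in precisely the chain you describe. Your write-up simply makes explicit the composition of citations that the paper leaves implicit.
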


\subsection{Infinitesimal deformations of skew-symmetric $r$-matrices and triangular Lie bialgebras}

\begin{defi}Let $\g$ be a Lie algebra and $r\in\wedge^2\g$  a
skew-symmetric $r$-matrix.   If $r+t\kappa$ is a
skew-symmetric $r$-matrix for any $t$, then we say that $\kappa$
generates a {\bf one-parameter infinitesimal deformation} of $r$.
\end{defi}

\begin{defi}
Two one parameter infinitesimal deformations
  $r_t^1=r+t\kappa_1$ and $r_t^2=r+t\kappa_2$ of $r$ are called
{\bf equivalent} if there exists $x\in\g$ such that $({\rm
Id}_\g+t{\rm ad}_x,{\rm Id}_\g-t{\rm ad}_x)$ is a weak homomorphism
from $r_t^2$ to $r_t^1$. In particular, a one-parameter
infinitesimal deformation $r_t=r+t\kappa$ is called {\bf
trivial} if there exists $x\in\g$ such that $({\rm Id}_\g+t{\rm
ad}_x,{\rm Id}_\g-t{\rm ad}_x)$ is a weak homomorphism from $r_t$ to
$r$.
\end{defi}

\begin{pro} Let $\g$ be a Lie algebra and $r\in\wedge^2\g$  a
skew-symmetric $r$-matrix.
\begin{itemize}
  \item[\rm(i)]If $\kappa$
generates a   one-parameter infinitesimal deformation  of $r$, then $\kappa$ is a $2$-cocycle.

  \item[\rm(ii)]If two one-parameter infinitesimal deformations of skew-symmetric
$r$-matrices $r_t^1=r+t\kappa_1$ and $r_t^2=r+t\kappa_2$ are
equivalent, then $\kappa_1$ and $\kappa_2$ are in
the same cohomology class of
$H^2(\g^*)$.
\end{itemize}

\end{pro}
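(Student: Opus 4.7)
The plan is to treat both parts by expanding the defining equations in the formal parameter $t$ and extracting the coefficient of $t^1$, then identifying the resulting identities with the coboundary operator $\dM$ via Proposition~\ref{pro:danddr}.

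For (i), since $r+t\kappa$ is a skew-symmetric $r$-matrix for every $t$, the graded bracket $[r+t\kappa,\,r+t\kappa]$ must vanish in $(\oplus_k\wedge^{k+1}\g,[\cdot,\cdot])$ identically in $t$. Both $r$ and $\kappa$ have shifted degree $1$, so by graded antisymmetry $[r,\kappa]=[\kappa,r]$, and the expansion reads
\[
[r,r] + 2t[r,\kappa] + t^2[\kappa,\kappa] = 0.
\]
The hypothesis $[r,r]=0$ kills the $t^0$ term, and the coefficient of $t^1$ yields $d_r\kappa = [r,\kappa] = 0$. Since $d_r = \dM$ on $\wedge^2\g$ by Proposition~\ref{pro:danddr}, this shows $\kappa$ is a $2$-cocycle.

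For (ii), I would unpack the definition of a weak homomorphism for $\phi = \Id_\g + t\ad_x$ and $\varphi = \Id_\g - t\ad_x$. Condition (\ref{eq:req2}), expanded in $t$, reduces at the linear order to the Jacobi identity in $\g$ and hence holds automatically. The substance is contained in (\ref{eq:req1}):
\[
(\varphi\otimes\Id_\g)(r+t\kappa_1) = (\Id_\g\otimes\phi)(r+t\kappa_2).
\]
The $t^0$ coefficients agree, and collecting the $t^1$ term gives
\[
\kappa_1 - \kappa_2 = (\ad_x\otimes\Id_\g + \Id_\g\otimes\ad_x)(r),
\]
whose right hand side is precisely $[x,r]$ by the derivation property of the graded bracket on $\wedge^\bullet\g$ acting on $r\in\wedge^2\g$.

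To close the argument, graded antisymmetry with $|x|=0$ and $|r|=1$ gives $[x,r] = -[r,x] = -d_r(x)$, and Proposition~\ref{pro:danddr} rewrites this as $-\dM(x)$. Therefore $\kappa_1-\kappa_2 = \dM(-x)$ is a coboundary, and combined with part (i) we obtain $[\kappa_1]=[\kappa_2]$ in $H^2(\g^*)$. The only delicate point will be the sign bookkeeping: the asymmetric choice $\varphi = \Id_\g - t\ad_x$ (with the minus sign) is exactly what makes the symmetric combination $\ad_x\otimes\Id + \Id\otimes\ad_x$ appear on the right, producing a genuine coboundary rather than a mere affine shift, in full agreement with the \oop version of the statement transported through the cochain map $\Psi$ of the preceding subsection.
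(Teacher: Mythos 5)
Your proposal is correct and follows essentially the same route as the paper: for (i) expand $[r+t\kappa,r+t\kappa]=0$ and read off the $t^1$ coefficient $[r,\kappa]=0=\dM\kappa$ via Proposition~\ref{pro:danddr}, and for (ii) extract the $t^1$ coefficient of Eq.~\eqref{eq:req1} to get $\kappa_1-\kappa_2=(\ad_x\otimes\Id_\g+\Id_\g\otimes\ad_x)(r)=[x,r]=-\dM x$. The sign bookkeeping you flag works out exactly as you describe, matching the paper's conclusion $\kappa_2-\kappa_1=[r,x]=\dM x$.
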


\begin{proof}
 (i) If $r+t\kappa$ is a skew-symmetric $r$-matrix for any $t$, then we have
 \begin{eqnarray*}
   [r+t\kappa,r+t\kappa]=[r,r]+2t[r,\kappa]+t^2[\kappa,\kappa]=0,
 \end{eqnarray*}
 which implies
$
 [r,\kappa]=0,
$
and hence $\dM \kappa=0$ by Proposition~\ref{pro:danddr}.

 (ii) Assume that $({\rm
Id}_\g+t{\rm ad}_x,{\rm Id}_\g-t{\rm ad}_x)$ is a weak homomorphism
from $r_t^2$ to $r_t^1$.  Then there exists an $x\in\g$ such that
\begin{eqnarray*}
  \Id_\g\otimes (\Id_\g+t\ad_x)(r+t\kappa_2)=(\Id-t\ad_x)\otimes \Id_\g (r+t\kappa_1),
\end{eqnarray*}
which implies
$$
t(\kappa_2-\kappa_1+[x,r])+t^2((\Id_\g\otimes\ad_x)\kappa_2+(\ad_x\otimes\Id_\g)\kappa_1)=0.
$$
Therefore, we have
$
\kappa_2-\kappa_1=[r,x],
$
which implies $\kappa_2-\kappa_1=\dM x$. This completes the proof.
\end{proof}

From the fact that the dual map of $\ad^*_x:\g^*\longrightarrow\g^*$ is $-\ad_x:\g\longrightarrow\g$ for all $x\in\g$, we have

\begin{pro}\label{pro:rrr}
Let $\g$ be a Lie algebra and $r\in\wedge^2\g$ a skew-symmetric $r$-matrix.
\begin{itemize}
  \item[\rm(i)]  $\kappa\in\wedge^2\g$ generates a one-parameter infinitesimal
deformation of $r$ if and only if $\kappa^\sharp$ generates a
one-parameter infinitesimal deformation of $r^\sharp$ as $\mathcal
O$-operators.
 \item[\rm(ii)]Two one-parameter infinitesimal deformations
  $r_t^1=r+t\kappa_1$ and $r_t^2=r+t\kappa_2$ of $r$ are
 equivalent if and only if ${r_t^1}^\sharp$ and ${r_t^2}^\sharp$ are equivalent as \oops.
\end{itemize}
\end{pro}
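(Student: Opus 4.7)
The plan is to reduce both assertions to the structural results already proved: for (i), the graded-Lie-algebra antihomomorphism $\Psi$ of Proposition~\ref{pro:dglamap1}; for (ii), the correspondence between weak homomorphisms of skew-symmetric $r$-matrices and homomorphisms of the associated $\mathcal O$-operators $r^\sharp$ with respect to the coadjoint representation, combined with the elementary identity $(\ad_x)^{*}=-\ad^{*}_{x}$ on $\g^{*}$.

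For part (i), I would start from the characterization that $r+t\kappa$ is a skew-symmetric $r$-matrix iff $[r+t\kappa,r+t\kappa]=0$ in the graded Lie algebra $(\oplus_k\wedge^{k+1}\g,[\cdot,\cdot])$. Since $\Psi$ is linear and, on degree-one elements, coincides with the sharp operation, we have $\Psi(r+t\kappa)=(r+t\kappa)^{\sharp}=r^{\sharp}+t\kappa^{\sharp}$. By Eq.~\eqref{eq:antimor} in Proposition~\ref{pro:dglamap1} with $p=q=1$,
\[
\Psi\big([r+t\kappa,r+t\kappa]\big)=-\Courant{r^{\sharp}+t\kappa^{\sharp},r^{\sharp}+t\kappa^{\sharp}}.
\]
Because $\g$ is finite-dimensional, $\Psi$ is a linear isomorphism in each degree, hence injective, so the vanishing of the left-hand side is equivalent to the vanishing of the right-hand side. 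By Proposition~\ref{pro:gla}, the latter is exactly the statement that $r^{\sharp}+t\kappa^{\sharp}$ is an $\mathcal O$-operator on $\g$ with respect to $(\g^{*};\ad^{*})$, which by definition means that $\kappa^{\sharp}$ generates a one-parameter infinitesimal deformation of $r^{\sharp}$.

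For part (ii), I would translate the weak-homomorphism pair $(\Id_\g+t\ad_x,\Id_\g-t\ad_x)$ on the $r$-matrix side into the corresponding pair on the $\mathcal O$-operator side. By the proposition comparing weak homomorphisms of $r$-matrices and homomorphisms of $\mathcal O$-operators, $(\phi,\varphi)$ is a weak homomorphism from $r_t^{2}$ to $r_t^{1}$ iff $(\phi,\varphi^{*})$ is a homomorphism from $(r_t^{2})^{\sharp}$ to $(r_t^{1})^{\sharp}$ as $\mathcal O$-operators with respect to the coadjoint representation. Dualizing $\varphi_t=\Id_\g-t\ad_x$ and using $(\ad_x)^{*}=-\ad^{*}_{x}$ gives $\varphi_t^{*}=\Id_{\g^{*}}+t\ad^{*}_{x}=\Id_{\g^{*}}+t\rho(x)$ for the coadjoint representation $\rho=\ad^{*}$. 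Therefore $(\Id_\g+t\ad_x,\Id_\g-t\ad_x)$ is a weak homomorphism from $r_t^{2}$ to $r_t^{1}$ if and only if $(\Id_\g+t\ad_x,\Id_{\g^{*}}+t\rho(x))$ is a homomorphism from $(r_t^{2})^{\sharp}$ to $(r_t^{1})^{\sharp}$, which is precisely the definition of equivalence of the two infinitesimal $\mathcal O$-operator deformations.

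Neither step involves serious computation; the only subtlety to be careful about is the sign coming from dualization of $\ad_x$ and the fact that $\Psi$ on degree-one elements restricted to $\wedge^{2}\g$ recovers the sharp map $r\mapsto r^{\sharp}$ and is bijective. Once these two identifications are recorded, (i) and (ii) follow by direct invocation of Propositions~\ref{pro:gla}, \ref{pro:dglamap1} and the weak-homomorphism comparison proposition, so no further obstacle is anticipated.
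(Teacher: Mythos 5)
Your argument is correct and is essentially the paper's own (the paper offers no written proof, treating the proposition as immediate from Proposition~\ref{pro:dglamap1}, the weak-homomorphism/$\mathcal O$-operator correspondence, and the fact that the dual of $\ad_x$ is $-\ad_x^*$), and you even supply the one detail the paper leaves implicit, namely that the converse direction in (i) needs the injectivity of $\Psi$. One small inaccuracy: $\Psi:\wedge^{k+1}\g\to\Hom(\wedge^k\g^*,\g)$ is injective but not an isomorphism for $k\ge 1$ (its image is only the alternating maps), though injectivity is all your argument actually uses.
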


Now we consider trivial deformations of a skew-symmetric $r$-matrix which lead to the definition of Nijenhuis elements associated to a skew-symmetric $r$-matrix. Let $r_t=r+t\kappa$ be a trivial deformation of a skew-symmetric $r$-matrix $r$. Then there exists an $x\in\g$ such that $({\rm Id}_\g+t{\rm
ad}_x,{\rm Id}_\g-t{\rm ad}_x)$ is a weak homomorphism from $r_t$ to
$r$. First by the fact that $\Id_\g+t\ad_x$ is a Lie algebra endomorphism, we get
\begin{eqnarray*}
(\Id_\g+t\ad_x)[y,z]&=&[(\Id_\g+t\ad_x)(y),(\Id_\g+t\ad_x)(z)]\\
&=&[y,z]+t([[x,y],z]+[y,[x,z]])+t^2[[x,y],[x,z]], \quad
\forall y,z\in \g,
\end{eqnarray*}
which implies
 \begin{equation}\label{eq:Nijr1}
   [[x,y],[x,z]]=0, \quad \forall y,z \in\g.
 \end{equation}
Then by Eq.~\eqref{eq:req1}, we get
 \begin{equation}\label{eq:Nijr2}
  ({\rm Id}_\g\otimes {\rm ad}_x)({\rm Id}_\g\otimes {\rm ad}_x+{\rm ad}_x\otimes
{\rm Id}_\g)(r)=0.
 \end{equation}
By Eq.~\eqref{eq:req2}, we get
 \begin{equation}\label{eq:Nijr3}
  [x,[[x,y],z]]=0,\quad \forall y,z\in \g.
 \end{equation}

\begin{defi} Let $\g$ be a Lie algebra.
An element   $x\in\g$ is called a {\bf Nijenhuis element}
associated to a skew-symmetric $r$-matrix $r\in\wedge^2\g$ if $x$ satisfies Eqs. \eqref{eq:Nijr1}, \eqref{eq:Nijr2} and \eqref{eq:Nijr3}.
\end{defi}

Denote by $\Nij(r)$ the set of
Nijenhuis elements associated to a skew-symmetric $r$-matrix $r$.

\begin{rmk} Obviously,
$x\in\g$ is   a   Nijenhuis element
associated to a skew-symmetric $r$-matrix $r$ if and only if $x$ is a Nijenhuis element associated to the \oop $r^\sharp$ with respect to the coadjoint representation, that is,
$
\Nij(r)=\Nij(r^\sharp).
$
\end{rmk}

\begin{ex}{\rm
  Consider the unique $2$-dimensional non-abelian Lie algebra in Example~\ref{ex:2dim}. It is obvious that for any $a\in\mathbb C$, $ae_1\wedge e_2$ is a skew-symmetric $r$-matrix, and for any $b\in\mathbb C$, $be_1$ is a Nijenhuis element associated to $ae_1\wedge e_2$.
   }
\end{ex}

From the above discussion, we have seen that a trivial deformation
of a skew-symmetric $r$-matrix gives rise to a
Nijenhuis element. Conversely, we have

\begin{pro}
Let $\g$ be a Lie algebra and $r\in\wedge^2\g$ a skew-symmetric $r$-matrix.
Then for any  $x\in \Nij(r)$,
$r_t=r+t[r,x]$
is a trivial  one-parameter infinitesimal deformation of $r$.
\end{pro}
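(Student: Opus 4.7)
My plan is to reduce this statement to the already-proved trivial-deformation theorem for $\huaO$-operators (Theorem~\ref{thm:trivial}) by passing from $r$ to $r^\sharp$. The translation tools are all in place: Proposition~\ref{pro:rrr} identifies deformations (and equivalences thereof) of $r$ with deformations of the $\huaO$-operator $r^\sharp$ with respect to the coadjoint representation, and the remark preceding the statement records the identity $\Nij(r)=\Nij(r^\sharp)$. Thus a Nijenhuis element for $r$ is automatically a Nijenhuis element for $r^\sharp$, and it suffices to show that the deformation $r_t = r+t[r,x]$ on the $r$-matrix side corresponds precisely to the canonical trivial deformation of $r^\sharp$ generated by $d_\mrho x$.

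The key computation, which is essentially free from earlier results, is the identity $[r,x]^\sharp = d_\mrho(x)$ recorded as a corollary just before the statement. Using this I would write
\[
r_t^\sharp \;=\; (r+t[r,x])^\sharp \;=\; r^\sharp + t\,[r,x]^\sharp \;=\; r^\sharp + t\,d_\mrho(x).
\]
Since $x\in\Nij(r)=\Nij(r^\sharp)$, Theorem~\ref{thm:trivial} applied to the $\huaO$-operator $r^\sharp$ ensures that $r^\sharp + t\,d_\mrho(x)$ is a well-defined $\huaO$-operator for every $t$ and that it is a trivial one-parameter infinitesimal deformation of $r^\sharp$, with trivializing homomorphism $(\Id_\g+t\ad_x,\Id_{\g^*}+t\ad^*_x)$.

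To finish, I would invoke Proposition~\ref{pro:rrr}(i) to conclude that $r_t$ itself is a one-parameter infinitesimal deformation of the skew-symmetric $r$-matrix $r$, and Proposition~\ref{pro:rrr}(ii) to transport the equivalence back. Concretely, the $\huaO$-operator homomorphism $(\Id_\g+t\ad_x,\Id_{\g^*}+t\ad^*_x)$ from $r_t^\sharp$ to $r^\sharp$ translates, via the equivalence of Definition~\ref{defi:iso} (using $(\ad_x)^\ast=-\ad_x^\ast$, so the dual of $\Id_\g-t\ad_x$ is $\Id_{\g^*}+t\ad_x^\ast$), to the weak homomorphism $(\Id_\g+t\ad_x,\Id_\g-t\ad_x)$ from $r_t$ to $r$, which is exactly the datum required by the definition of triviality. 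The only point that needs a moment of care is this sign/dualization matching between the coadjoint-representation version of the $\huaO$-operator trivialization and the $r$-matrix weak-homomorphism convention; once that is recorded, the statement follows with no further computation.
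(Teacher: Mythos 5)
Your proposal is correct and follows essentially the same route as the paper: invoke $\Nij(r)=\Nij(r^\sharp)$ and the identity $[r,x]^\sharp=d_\mrho(x)$, apply Theorem~\ref{thm:trivial} to the $\huaO$-operator $r^\sharp$, and transport the trivial deformation back via Proposition~\ref{pro:rrr}. The extra care you take with the dualization $(\Id_\g-t\ad_x)^*=\Id_{\g^*}+t\ad_x^*$ is exactly the point the paper records just before Proposition~\ref{pro:rrr}, so nothing is missing.
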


\begin{proof}
By Theorem \ref{thm:trivial}, since $x$ is also a Nijenhuis element associated to the \oop $r^\sharp$, $r_t^\sharp=r^\sharp+t[r,x]^\sharp=r^\sharp+td_\mrho(x)$ is a trivial deformation of $r^\sharp$. Then by Proposition \ref{pro:rrr}, $r_t=r+t[r,x]$ is a trivial  one-parameter
infinitesimal deformation of $r$.
\end{proof}

In the sequel, we consider one-parameter infinitesimal deformations of a
Lie bialgebra.

\begin{defi} Let $(\g,[\cdot,\cdot],\delta)$ be a Lie
bialgebra and $\gamma:\g\longrightarrow\wedge^2\g$ a linear map.
 If
$\delta+t\gamma$ defines a Lie bialgebra structure
 on the Lie algebra $\g$ for any $t$, then we say that $\gamma$
generates a {\bf one-parameter infinitesimal deformation} of the
Lie bialgebra $(\g,[\cdot,\cdot],\delta)$.
\end{defi}

The following conclusion is obvious.

\begin{pro}\label{pro:deforLiebir}
 Let $(\g,[\cdot,\cdot],\delta)$ be a triangular Lie bialgebra induced by
a skew-symmetric $r$-matrix $r$ through Eq.~\eqref{eq:delta}. If $\kappa$ generates a {one-parameter infinitesimal
deformation} of $r$, then $\gamma$ defined from $\kappa$ by
Eq.~\eqref{eq:delta} generates a one-parameter infinitesimal
deformation of the Lie bialgebra $(\g,[\cdot,\cdot],\delta)$.
\end{pro}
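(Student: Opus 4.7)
The plan is to reduce the assertion to the tautological fact that every skew-symmetric $r$-matrix produces a triangular Lie bialgebra via the formula $\delta(x) = [x,r]$.

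First I would unpack the definitions. Set $r_t := r + t\kappa \in \wedge^2\g$ for $t \in \K$. By hypothesis, $\kappa$ generates a one-parameter infinitesimal deformation of $r$, so $r_t$ is a skew-symmetric $r$-matrix for every $t$, i.e.\ $[r_t,r_t]=0$. Next, $\gamma:\g\longrightarrow \wedge^2\g$ is defined by $\gamma(x) = [x,\kappa]$ in accordance with Eq.~\eqref{eq:delta}, and the candidate deformed cobracket is
\begin{equation*}
\delta_t(x) := \delta(x) + t\gamma(x) = [x,r] + t[x,\kappa].
\end{equation*}

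Second, using the $\K$-bilinearity of the Gerstenhaber bracket $[\cdot,\cdot]$ on $\wedge^{\bullet}\g$, I would observe that
\begin{equation*}
\delta_t(x) = [x,r+t\kappa] = [x, r_t], \quad \forall x \in \g.
\end{equation*}
Thus $\delta_t$ is precisely the cobracket attached to the skew-symmetric $r$-matrix $r_t$ via Eq.~\eqref{eq:delta}. Since $r_t$ is a skew-symmetric $r$-matrix for every $t \in \K$, the construction recalled just above Eq.~\eqref{eq:delta} yields that $(\g,[\cdot,\cdot],\delta_t)$ is a (triangular) Lie bialgebra for every $t$. Consequently, $\gamma$ generates a one-parameter infinitesimal deformation of $(\g,[\cdot,\cdot],\delta)$.

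There is really no obstacle here: the statement is a direct corollary of the linearity of $x \mapsto [x,\cdot]$ in its second slot and the standard fact, already invoked in the paper, that $\delta(x)=[x,r]$ defines a Lie bialgebra whenever $r$ is a skew-symmetric $r$-matrix. If the authors wish to make the proof self-contained, one could verify explicitly that the co-Jacobi identity for $\delta_t$ and the 1-cocycle property of $\delta_t$ follow from $[r_t,r_t]=0$, but these are exactly the calculations that justify the passage from skew-symmetric $r$-matrices to triangular Lie bialgebras, so no new input is required.
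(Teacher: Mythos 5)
Your proof is correct and is exactly the argument the paper has in mind: the paper states this proposition without proof, remarking only that "the following conclusion is obvious," and the obvious reason is precisely your observation that $\delta_t(x)=[x,r]+t[x,\kappa]=[x,r_t]$ is the cobracket induced by the skew-symmetric $r$-matrix $r_t=r+t\kappa$ via Eq.~\eqref{eq:delta}, hence a Lie bialgebra structure for every $t$.
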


\begin{defi} Let $(\g,[\cdot,\cdot],\delta)$ be a Lie bialgebra.
Two one-parameter infinitesimal deformations
$\delta_t^1=\delta+t\gamma_1$ and $\delta_t^2=\delta+t\gamma_2$
are said to be {\bf equivalent} if there exists an $x\in\g$ such
that $({\rm Id}_\g+t{\rm ad}_x,{\rm Id}_\g-t{\rm ad}_x)$ is a weak
homomorphism from $(\g,[\cdot,\cdot],\delta_t^2)$ to $(\g,[\cdot,\cdot],\delta_t^1)$. In particular, a
one-parameter infinitesimal deformation $\delta_t=\delta+t\gamma$
is said to be {\bf trivial} if there exists an $x\in\g$ such that
$({\rm Id}_\g+t{\rm ad}_x,{\rm Id}_\g-t{\rm ad}_x)$ is a weak
homomorphism from $(\g,[\cdot,\cdot],\delta_t)$ to $(\g,[\cdot,\cdot],\delta)$.
\end{defi}

\begin{pro}
 Let $(\g,[\cdot,\cdot],\delta)$ be a triangular Lie bialgebra induced by
a skew-symmetric $r$-matrix $r$ through Eq.~\eqref{eq:delta}.
Assume that $r_t^1=r+t\kappa_1$ and $r_t^2=r+t\kappa_2$ are two
one-parameter infinitesimal deformations of $r$, and
$\delta_t^1=\delta+t\gamma_1$ and $\delta_t^2=\delta+t\gamma_2$
are the corresponding one-parameter infinitesimal deformations of
$(\g,[\cdot,\cdot],\delta)$ given in Proposition
\ref{pro:deforLiebir} respectively. Then $\delta_t^1$
and $\delta_t^2$ are equivalent if and only if $r_t^1=r+t\kappa_1$
and $r_t^2=r+t\kappa_2$ are equivalent.
\end{pro}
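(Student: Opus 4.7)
The plan is to prove both directions. The forward direction (r-matrix equivalence implies Lie bialgebra equivalence) follows immediately from Proposition~\ref{pro:r-L}: if $(\phi_t, \varphi_t) := (\Id_\g + t\ad_x,\, \Id_\g - t\ad_x)$ is a weak homomorphism from $r_t^2$ to $r_t^1$ as r-matrices, then Proposition~\ref{pro:r-L} (applied over the base ring $\bfk[[t]]$) tells us that the same pair is a weak homomorphism of the induced triangular Lie bialgebras from $\delta_t^2$ to $\delta_t^1$, establishing Lie bialgebra equivalence with the same witness $x$.

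For the backward direction, suppose that $(\phi_t, \varphi_t)$ is a Lie bialgebra weak homomorphism witnessing equivalence of $\delta_t^1$ and $\delta_t^2$. I first observe that the defining conditions of weak homomorphism of r-matrices and of Lie bialgebras coincide in two of the three slots: the requirement that $\phi_t$ be a Lie algebra homomorphism is identical in both (equivalently, $[[x,y],[x,z]]=0$ for all $y,z\in\g$), and the compatibility~\eqref{eq:req2} coincides with~\eqref{eq:req3}. Moreover, since $\varphi_t = \Id_\g - t\ad_x$ has the same form as $\phi_t$ with $x$ replaced by $-x$, it is automatically a Lie algebra homomorphism as well. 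Thus the only condition remaining to verify for r-matrix equivalence is Eq.~\eqref{eq:req1}: $(\varphi_t \otimes \Id_\g)(r_t^1) = (\Id_\g \otimes \phi_t)(r_t^2)$.

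To obtain~\eqref{eq:req1} I would expand the Lie coalgebra homomorphism condition $(\varphi_t \otimes \varphi_t)\delta_t^1 = \delta_t^2 \circ \varphi_t$ using $\delta_t^i(z) = [z, r_t^i]$ and extract the resulting polynomial identities in $t$; equivalently, via Proposition~\ref{pro:rrr}(ii) one can instead work with the \oop intertwining $\phi_t \circ (r_t^2)^\sharp = (r_t^1)^\sharp \circ \varphi_t^*$ and derive it by dualising the Lie coalgebra condition and transporting $\ad^*$ through $\varphi_t^*$ via~\eqref{eq:req3}. The main obstacle is that a triangular Lie bialgebra $\delta$ determines its r-matrix only up to an $\ad$-invariant element of $\wedge^2\g$, so the Lie coalgebra homomorphism condition is a priori weaker than the r-matrix intertwining; the full polynomial structure of $(\phi_t, \varphi_t)$ together with~\eqref{eq:req3} must be exploited to show that the potential ad-invariant discrepancy between $(\varphi_t \otimes \varphi_t)r_t^1 - r_t^2$ and $(\varphi_t \otimes \Id_\g)r_t^1 - (\Id_\g \otimes \phi_t)r_t^2$ vanishes order-by-order in $t$, yielding Eq.~\eqref{eq:req1}.
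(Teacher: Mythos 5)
Your forward direction is correct and agrees with the paper (it is Proposition~\ref{pro:r-L} applied to the pair $(\Id_\g+t\ad_x,\Id_\g-t\ad_x)$), and you have correctly isolated the only point at issue in the backward direction: the two notions of equivalence share the requirement that $\phi_t$ be a Lie algebra homomorphism and the compatibility \eqref{eq:req2} (which coincides with \eqref{eq:req3}), so everything reduces to deriving \eqref{eq:req1} from the Lie coalgebra homomorphism condition. But that is precisely the step you do not carry out --- you only announce that the ``$\ad$-invariant discrepancy'' must be shown to vanish order by order --- and this deferred step is the entire content of the backward implication, so as written the proposal is not a proof.

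Moreover, the deferred step cannot be carried out from the stated hypotheses. Expanding $\delta^2_t\circ\varphi_t=(\varphi_t\otimes\varphi_t)\circ\delta^1_t$ in $t$ and using the Jacobi identity $[x,[y,r]]=[[x,y],r]+[y,[x,r]]$, the coefficient of $t$ yields only $[y,\kappa_2-\kappa_1+[x,r]]=0$ for all $y\in\g$, i.e.\ that $\kappa_2-\kappa_1+[x,r]$ is an $\ad$-invariant element of $\wedge^2\g$, whereas the coefficient of $t$ in \eqref{eq:req1} demands $\kappa_2-\kappa_1+[x,r]=0$. The remaining hypotheses --- that $\phi_t$ is a Lie algebra endomorphism and that \eqref{eq:req3} holds --- are conditions on $x$ and the bracket of $\g$ alone and carry no information about $r$, $\kappa_1$, $\kappa_2$, so they cannot remove this ambiguity. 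Concretely, if $\wedge^2\g$ contains a nonzero $\ad$-invariant element $s$ with $[s,s]=0$ (e.g.\ $\g$ abelian of dimension at least $2$), then $r^1_t=r$ and $r^2_t=r+ts$ induce the same cobracket deformations $\delta^1_t=\delta^2_t$, hence are equivalent as deformations of the Lie bialgebra (take $x=0$), yet they are not equivalent as deformations of $r$. So the obstruction you flagged is real and fatal to the strategy; the backward direction can only be salvaged under an additional hypothesis such as the vanishing of the $\ad$-invariants in $\wedge^2\g$. For what it is worth, the paper's own proof consists of the same unproved one-line assertion that the coalgebra condition is equivalent to \eqref{eq:req1}, so you have at least located exactly where the difficulty sits --- but your proposal does not resolve it.
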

\begin{proof}
For this one checks that $(\Id_\g-t\ad_x)^*=\Id_{\g^*}+t\ad_x^*$ is a Lie algebra morphism from $(\g^*,[\cdot,\cdot]^*_2)$ to $(\g^*,[\cdot,\cdot]^*_1)$ if and only if
$
  \Id_\g\otimes (\Id_\g+t\ad_x)(r+t\kappa_2)=(\Id-t\ad_x)\otimes \Id_\g (r+t\kappa_1).
$
\end{proof}

\begin{cor}
Let $(\g,[\cdot,\cdot],\delta)$ be a triangular Lie bialgebra induced by a
skew-symmetric $r$-matrix $r$ through Eq.~\eqref{eq:delta}. Then
for any  $x\in \Nij(r)$,
$$\delta_t(y)=\delta(y)+t[y,[r,x]],\;\;\forall y\in \g$$ is a trivial  one-parameter
infinitesimal deformation of $(\g,[\cdot,\cdot],\delta)$.
\end{cor}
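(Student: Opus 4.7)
The plan is to chain together the three preceding results in this subsection. First I would observe that by the proposition immediately preceding the corollary, any Nijenhuis element $x\in\Nij(r)$ produces the one-parameter infinitesimal deformation $r_t=r+t\kappa$ of the skew-symmetric $r$-matrix $r$, where $\kappa:=[r,x]\in\wedge^2\g$, and moreover this deformation is \emph{trivial} in the sense that $(\Id_\g+t\ad_x,\Id_\g-t\ad_x)$ provides a weak homomorphism from $r_t$ to $r$.

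Next I would apply Proposition~\ref{pro:deforLiebir} to transfer $\kappa$ into a deformation of the triangular Lie bialgebra. Namely, define $\gamma:\g\to\wedge^2\g$ by $\gamma(y):=[y,\kappa]=[y,[r,x]]$ for all $y\in\g$; then $\gamma$ generates the one-parameter infinitesimal deformation $\delta_t=\delta+t\gamma$ of $(\g,[\cdot,\cdot],\delta)$, which is precisely the $\delta_t$ appearing in the statement since $\delta_t(y)=\delta(y)+t[y,[r,x]]$.

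Finally, triviality is obtained by invoking the equivalence result of the previous proposition: since $r_t$ is trivial (equivalent to the constant deformation $r$) via the pair $(\Id_\g+t\ad_x,\Id_\g-t\ad_x)$, the same pair furnishes an equivalence between $(\g,[\cdot,\cdot],\delta_t)$ and $(\g,[\cdot,\cdot],\delta)$ as Lie bialgebras. Concretely, the Lie algebra morphism condition $(\Id_\g-t\ad_x)^*=\Id_{\g^*}+t\ad_x^*$ from $(\g^*,[\cdot,\cdot]_{r_t})$ to $(\g^*,[\cdot,\cdot]_r)$ is already encoded in the weak homomorphism of $r$-matrices, and Eq.~\eqref{eq:req3} is automatic from the Nijenhuis condition. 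Thus $\delta_t$ is a trivial one-parameter infinitesimal deformation of $(\g,[\cdot,\cdot],\delta)$.

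No serious obstacle is expected here: the corollary is essentially a functorial consequence of the bijective correspondence (both for deformations and for their equivalences) between skew-symmetric $r$-matrices and triangular Lie bialgebras established in the preceding propositions. The only bookkeeping point to watch is the sign convention: one must verify that the weak homomorphism pair is $(\Id_\g+t\ad_x,\Id_\g-t\ad_x)$ (not $\pm$ on both factors) so that condition~\eqref{eq:req1} matches up with the dualization $(\Id_\g-t\ad_x)^*=\Id_{\g^*}+t\ad_x^*$ that makes the cobracket side equivalent.
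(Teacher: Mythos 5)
Your proof is correct and follows exactly the route the paper intends: the corollary is an immediate consequence of chaining the proposition that $r_t=r+t[r,x]$ is a trivial deformation of $r$ for $x\in\Nij(r)$, Proposition~\ref{pro:deforLiebir}, and the equivalence-transfer proposition (equivalently Proposition~\ref{pro:r-L}) that carries the weak homomorphism $(\Id_\g+t\ad_x,\Id_\g-t\ad_x)$ over to the Lie bialgebra side. The paper leaves the corollary without proof precisely because this chaining is the intended argument, and your sign bookkeeping on the pair is the right point to check.
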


\subsection{Further discussions}

By a similar approach as above, we can also study the formal
deformations of skew-symmetric $r$-matrices in terms of the formal
deformations of the corresponding $\mathcal O$-operators given in
Section 5.

The relations among deformations of various objects obtained above can be summarized in the following diagram:
\vspace{-.1cm}
{\footnotesize
 \[
 \xymatrix{
&\mbox{deformation of sub-adj Lie alg.}\ar@{=}[dd]& \\
\mbox{deformation of }~ r\mbox{-matrix}\ar@/^1.4pc/[ur]\ar@/_1.4pc/[dr] &\qquad\qquad\qquad\qquad\ar[r]& \mbox{deformation of triangular Lie bialg.}\\
&\mbox{deformation of Lie coalg.}& }
\]
}

We end the paper with some observations on related topics for
future consideration.

\begin{rmk}\label{rmk:co}
In the above deformations of a Lie bialgebra we
deform the Lie coalgebra structure but leave the
underlying Lie algebra structure intact. This is consistent with
the overall approach of this paper that we have deformed the \oops
and pre-Lie algebras while fixing the underlying Lie algebra.
Results in this paper pave the way to consider a deformation
theory where the Lie algebra is also deformed.
\end{rmk}

\begin{rmk}
It is natural to consider the quantum enveloping algebra
structures corresponding to the deformations of the above
triangular Lie bialgebras. It is known that for every Lie bialgebra,
there is a corresponding quantum enveloping algebra~\cite{EK}. So the
quantum enveloping algebras corresponding to the deformations of
the triangular Lie bialgebras in this subsection should be certain
``deformed" structures also. This problem could be better
understood with more explicit examples. On the other hand, there
have been some interest in ``multiparameter quantization". In
fact, some approaches on this subject (for example~\cite{Re}) have
already involved certain ``deformed" structures of $r$-matrices.
It is an interesting problem to study the relationships between
these quantum structures.
\end{rmk}

\noindent{\bf Acknowledgements. } This research is supported by
NSFC (11471139,  11425104, 11771190) and NSF of Jilin
Province (20170101050JC).

\end{document}